\newcommand{\RR}{\mathbb{R}}
\newcommand{\NN}{\mathbb{N}}
\renewcommand{\SS}{\mathbb{S}}
\newcommand{\Cc}{\mathcal{C}}
\newcommand{\Ff}{\mathcal{F}}
\newcommand{\Mm}{\mathcal{M}}
\newcommand{\Pp}{\mathcal{P}}
\newcommand{\Ss}{\mathcal{S}}
\newcommand{\Xx}{\mathcal{X}}
\newcommand{\Yy}{\mathcal{Y}}
\renewcommand{\d}{\, \mathrm{d}}
\newcommand{\id}{\mathrm{id}}
\newcommand{\qandq}{\quad \text{and} \quad}
\renewcommand{\div}{\mathrm{div}}
\DeclareMathOperator{\dist}{dist}
\DeclareMathOperator{\spt}{spt}
\title[Implicit bias of wide two-layer neural networks]{Implicit Bias of Gradient Descent for Wide Two-layer Neural Networks Trained with the Logistic Loss}
\begin{document}

\maketitle

\begin{abstract}%
  Neural networks trained to minimize the logistic (a.k.a.~cross-entropy) loss with gradient-based methods are observed to perform well in many supervised classification tasks. Towards understanding this phenomenon, we analyze the training and generalization behavior of infinitely wide two-layer neural networks with homogeneous activations. We show that the limits of the gradient flow on exponentially tailed losses can be fully characterized as a max-margin classifier in a certain non-Hilbertian space of functions. In presence of hidden low-dimensional structures, the resulting margin is independent of the ambiant dimension, which leads to strong generalization bounds. In contrast, training only the output layer implicitly solves a kernel support vector machine, which a priori does not enjoy such an adaptivity. Our analysis of training is non-quantitative in terms of running time but we prove computational guarantees in simplified settings by showing equivalences with online mirror descent. Finally, numerical experiments suggest that our analysis describes well the practical behavior of two-layer neural networks with ReLU activations and confirm the statistical benefits of this implicit bias.
\end{abstract}

\section{Introduction}\label{sec:intro}
Artificial neural networks are successfully used in a variety of difficult supervised classification tasks, but the mechanisms behind their performance remain unclear. The situation is particularly intriguing when the number of parameters of these models exceeds by far the number of input data points and they are trained with gradient-based methods until zero training error, without any explicit regularization. In this case, the training algorithm induces an \emph{implicit bias}: among the many classifiers which overfit on the training set, it selects a specific one which often turns out to perform well on the test set.
In this paper, we study the implicit bias of wide neural networks with two layers (i.e., with a single hidden-layer) trained with gradient descent on the logistic loss, or any loss with an exponential tail. Our analysis lies at the intersection of two lines of research that study (i) the implicit bias of gradient methods, and (ii) the training dynamics of wide neural networks.

\paragraph{Implicit bias of gradient methods.} \citet{soudry2018implicit} show that for linearly separable data, training a linear classifier with gradient descent on the logistic loss, or any loss with an exponential tail, implicitly leads to a max-margin linear classifier for the $\ell_2$-norm. This result together with results in the boosting literature~\citep{telgarsky2013margins} have led to a fruitful line of research. Fine analyses of convergence rates have been carried out by~\cite{nacson2018convergence, ji2019refined,ji2018risk}, and extensions to other gradient-based algorithms and to factored parameterizations are considered by~\cite{gunasekar2018characterizing}. Linear neural networks have been studied by~\cite{gunasekar2018implicit, ji2018gradient, nacson2019lexicographic}, and some properties in general non-convex cases are given by~\cite{xu2018will}. Closer to the present paper,  \cite{lyu2019gradient} show that for homogeneous neural networks the training trajectory converges in direction to a critical point of some nonconvex max-margin problem. In the present work, we improve this result for the two-layer case: we characterize the learnt classifier as the solution of a \emph{convex} max-margin problem. Importantly, this characterization is precise enough to enable a statistical analysis (see Section~\ref{sec:generalization}).

\paragraph{Dynamics of infinitely-wide neural networks.}  This fine characterization is made possible by looking at the infinite width limit of two-layer neural networks. This strategy has been used in several works to obtain insights on their statistical properties~\citep{bengio2006convex,bach2017breaking} or training behavior~\citep{nitanda2017stochastic, rotskoff2018neural, chizat2018global, mei2018mean, sirignano2018mean}, which can be described by a Wasserstein gradient flow~\citep{ambrosio2008gradient}.  In particular, \citet{chizat2018global} show that if the loss is convex, if the initialization is ``diverse enough'', and if the gradient flow of the objective converges, then its limit is a global minimizer. This result does not apply in our context because the gradient flow diverges, which turns out to be beneficial for the analysis of the implicit bias that we propose.

A general drawback of those \emph{mean-field} analyses is that they are mostly non-quantitative, both in terms of number of neurons and number of iterations. While some works have shown quantitative results by modifying the dynamics~\citep{mei2019mean,wei2019regularization, chizat2019sparse}, we do not take this path in order to stay close to the way neural networks are used in practice and because our numerical experiments suggest that those modifications are not necessary to obtain a good practical behavior.  
Finally, we stress that our analysis does not take place in the \emph{lazy training} regime~\citep{chizat2019lazy} which consists of training dynamics that can be analyzed in a perturbative regime around the initialization~\cite[see, e.g.,][]{li2018learning, jacot2018neural,du2018gradient}. Lazy training is another kind of implicit bias that amounts to training a linear model and does not lead to adaptivity results as those shown in Section~\ref{sec:generalization} (see Figure~\ref{fig:lazytogreedy} for an illustration in our context).

\subsection{Organization and contributions} 
After preliminaries on wide neural networks in Section~\ref{sec:infinitelywidenetworks}, we make the following contributions :
\begin{itemize}
\item In Section~\ref{sec:implicitbias}, we show that for a class of two-layer neural networks and for losses with an exponential tail, the classifier learnt by the non-convex gradient flow is a max-margin classifier for a certain functional norm known as the variation norm. 
\item When fixing the ``directions'' of the neurons (Section~\ref{sec:rate}), or when only training the output layer (Section~\ref{sec:convex}), we show that the dynamics implicitly performs \emph{online mirror ascent} on a sequence of smooth-margin objectives and thus naturally maximizes the margin. This leads to convergence guarantees in $O(\log(t)/\sqrt{t})$ in situations where no rate was previously known. 
\item In Section~\ref{sec:generalization}, we study the margins of those classifiers and prove dimension-independent generalization bounds for classification in presence of hidden linear structures.
\item We perform numerical experiments in Section~\ref{sec:numerics} for two-layer ReLU neural networks which confirm the statistical efficiency of this implicit bias in a high-dimensional setting.
\end{itemize}

In summary, we show that training two-layer ReLU neural networks implicitly solves a problem with strong statistical benefits. We stress however that the runtime of the algorithm is still unknown.

\subsection{Notation}

We denote by $\Mm(\RR^p)$ (resp.~$\Mm_+(\RR^p)$) the set of signed (resp.~nonnegative) finite Borel measures on~$\RR^p$ and by $\Pp(\RR^p)$ (resp.~$\Pp_2(\RR^p)$) the set of probability measures (resp.~with finite second moment). The set $\Delta^{m-1}=\{ p \in \RR^m_+\;;\; \mathbf{1}^\top p =1 \}$ is the simplex.

\section{Preliminaries on infinitely wide two-layer networks}\label{sec:infinitelywidenetworks}

\subsection{$2$-homogeneous neural networks}

We consider a binary classification problem with a training set $(x_i,y_i)_{i\in [n]}$ of $n$ pairs of observations with $x_i\in \RR^d$ and $y_i \in\{-1,+1\}$ and prediction functions of the form 
\begin{equation}\label{eq:finitewidth}
h_m(\mathbf{w},x)= \frac1m \sum_{j=1}^m \phi(w_j,x),
\end{equation}
 where $m\geq 1$ is the number of units and $\mathbf{w} = (w_j)_{j\in [m]} \in (\RR^p)^m$ are the trainable parameters. This setting covers two-layer neural networks where $m$ is the size of the hidden layer.  In this paper, we are interested in the over-parameterized regime where $m$ is large, and the prefactor $1/m$ is needed to obtain a non-degenerate limit. We refer to $\phi$ as a \emph{feature function}, and we focus on the case where $\phi$ is $2$-\emph{homogeneous} and \emph{balanced}:
 \begin{enumerate}
\item[{\sf (A1)}] The function $\phi$ is (positively) \emph{$2$-homogeneous} in its first variable, i.e.,  $\phi(rw,x)=r^2\phi(w,x)$ for all $(r,w,x)\in \RR_+ \times \RR^p  \times \RR^d$ and it is \emph{balanced}, which means that there is a map $T:\SS^{p-1}\to \SS^{p-1}$ such that for all $\theta\in \SS^{p-1}$, $\phi(T(\theta),\cdot)=-\phi(\theta,\cdot)$.
\end{enumerate}
Here are examples of models which satisfy  {\sf (A1)}:
 \begin{itemize}
 \item \emph{ReLU networks.} A two-layer neural network with the rectified linear unit (ReLU) activation function is obtained by setting $\phi(w,x)=b (a^\top (x,1) )_+$ where $w=(a,b)\in \RR^{d+2}$. This is the motivating example of this article. It is not differentiable but is covered by Theorem~\ref{th:bias_relu}.
 \item \emph{S-ReLU networks.}  With the function $\phi(w,x)=\epsilon \, (a^\top (x,1))_+^2$ where $w=(a,\epsilon)\in \RR^{d+1}\times \{-1,1\}$, we recover the same hypothesis class than two-layer neural networks with squared ReLU activation. This function is differentiable and rigorously covered by all theorems\footnote{Our arguments can indeed be applied to any situation where the parameter space can be factored as $\RR_+\times \Theta$ where $\Theta$ is a compact Riemannian manifold without boundary, see~\cite{chizat2019sparse}. For clarity, we limit ourselves to a parameter space $\RR^p$ (which corresponds to $\Theta=\SS^{p-1}$) while for S-ReLU, this would correspond to $\Theta = \SS^{p-1} \times \{-1,1\}$.}. 
 \end{itemize} 
 
\subsection{Parameterizing with a measure}\label{sec:measure}
The particular structure of two-layer neural networks allows for an alternative description of the predictor function. For $\mu\in \Pp_2(\RR^p)$, we define
\begin{equation}\label{eq:infinitewidth}
h(\mu,x) = \int_{\RR^p} \phi(w,x)\d\mu(w).
\end{equation}
Finite width networks as in Eq.~\eqref{eq:finitewidth} are recovered when $\mu$ is a discrete measure with $m$ atoms. 

The representation in Eq.~\eqref{eq:infinitewidth} can be reduced to a \emph{convex neural network} parameterized by an unnormalized measure~\citep{bengio2006convex}, as follows. We define the $2$-homogenous projection operator, $\Pi_2:\Pp_2(\RR^p)\to \Mm_+(\SS^{p-1})$ characterized by the property that, for any $\varphi\in \Cc(\SS^{p-1})$, it holds
\begin{equation}\label{eq:homogeneousprojection}
\int_{\SS^{p-1}} \varphi(\theta) \d[\Pi_2(\mu)](\theta) = \int_{\RR^p} \Vert w\Vert^2 \varphi(w/\Vert w\Vert) \d\mu(w),
\end{equation}
where the last integrand is extended by continuity at $w=0$.
This operator projects the mass of~$\mu$ on the unit sphere after re-weighting it by the squared distance to the origin. Seing $\Pi_2(\mu)$ as a measure on $\RR^p$ supported on the sphere, it holds by construction $h(\mu,\cdot) = h(\Pi_2(\mu),\cdot)$ for all $\mu \in \Pp_2(\RR^p)$. Note that the restriction to \emph{nonnegative} measures, which is not present in convex neural networks, does not change the expressivity of the model thanks to the assumption that $\phi$ is balanced in {\sf (A1)}.

\subsection{Max-margins and functional norms}\label{sec:functionalanalysis}
Given the training set  $(x_i,y_i)_{i\in [n]}$, the margin of a predictor $f:\RR^d\to \RR$ is given by 
$
\min_{i\in [n]} y_i f(x_i).
$
When the margin is strictly positive, the predictor makes no error on the training set and its value is typically seen as the worst confidence of the predictor.  Max-margin predictors are those that maximize the margin over a certain set of functions. When this set of functions is given by a unit ball for a certain norm $\Vert \cdot\Vert$, they solve
\[
\max_{\Vert f\Vert \leq 1} \min_{i\in [n]} y_i f(x_i).
\]
In this paper we deal with two notions of norms, that in turn define two types of max-margin classifiers. We refer to~\cite{bach2017breaking} for a more detailed presentation.

\paragraph{Variation norm.}  Given a feature function $\phi$ satisfying~{\sf (A1)}, we consider the space $\Ff_1$ of functions that can be written as 
$
f(x) = \int_{\SS^{p-1}} \phi(\theta,x) \d\nu(\theta),
$
where $\nu\in \Mm_+(\SS^{p-1})$ has finite mass (since $\phi$ is balanced, we could equivalently take $\nu \in \Mm(\SS^{p-1})$). The infimum of $\nu(\SS^{p-1})$ over all such decompositions defines a norm $\Vert f\Vert_{\Ff_1}$, sometimes called the \emph{variation norm} on $\Ff_1$~\citep{kurkova2001bounds}. The $\Ff_1$-max-margin of the training set is denoted $\gamma_1$ and given by 
\begin{equation}\label{eq:maxmargin1}
\gamma_1 \coloneqq \max_{\Vert f\Vert_{\Ff_1}\leq 1}\min_{i\in [n]}\  y_i f(x_i) = \max_{\substack{\nu \in \Mm_+(\SS^{p-1})\\ \nu(\SS^{p-1})\leq 1}}\min_{i\in [n]} \ y_i \int_{\SS^{p-1}}  \phi(\theta,x)\d\nu(\theta).
\end{equation}
For ReLU networks, the variation norm defined above does not \emph{a priori} coincide with the variation norm as defined by~\citet{bengio2006convex} and \citet{bach2017breaking} where the feature function is instead $\tilde \phi(\tilde \theta,x) = (a\cdot x + b)_+$ for $\tilde \theta =(a,b)\in \RR^{d}\times \RR$. Still, using $\phi$ or $\tilde \phi$ leads to norms which are equal up to a factor $2$ (see~\citet{neyshabur2014search} or Appendix~\ref{app:equivalencenorms}). See~\citet{savarese2019infinite, ongie2019function} for analytical descriptions of the space $\Ff_1$ for ReLU networks. 

\paragraph{RKHS norm.} Considering more specifically a two-layer neural network with activation function $\sigma:\RR\to \RR$, we can define another norm and function space, which leads to a reproducing kernel Hilbert space (RKHS). Let $\tau \in \Pp(\SS^{p-1})$ be the uniform measure on the sphere $\SS^{p-1}$ where $p=d+1$ and define $\Ff_2$ as the space of functions of the form
$
f(x) = \int_{\SS^{p-1}}  \sigma(b + c^\top x) g(b,c) \d\tau(b,c),
$
for some square-integrable function $g\in L^2(\tau)$. The infimum of $\Vert g\Vert_{L^2(\tau)} = \left(\int \vert g(b,c)\vert^2\d\tau(b,c)\right)^{\frac12}$ over such decompositions defines a norm $\Vert f\Vert_{\Ff_2}$. It is shown by~\citet{bach2017breaking} that $\Ff_2$ is a RKHS. The $\Ff_2$-max-margin of the training set is denoted $\gamma_2$ and given by
\begin{equation}\label{eq:maxmargin2}
\gamma_2 \coloneqq \max_{\Vert f\Vert_{\Ff_2}\leq 1}\min_{i\in [n]} \ y_i f(x_i) = \max_{\Vert g\Vert_{L^2(\tau)}\leq 1}\min_{i\in [n]} \ y_i \int_{\SS^{p-1}}  \sigma(b +c^\top x_i)g(b,c)\d\tau(b,c).
\end{equation}
This is a separable kernel support vector machine problem.

\paragraph{Statistical and computational properties.}  In Section~\ref{sec:generalization}, we will show that the margin $\gamma_1$ can be large even in high dimension when the dataset has hidden low dimensional structure, which leads to strong generalization guarantees, which is \emph{a priori} not true for $\gamma_2$. While $\Ff_2$-max-margin classifiers can be found with convex optimization techniques (such as training only the output layer, as shown in Section~\ref{sec:convex}), it is not clear \emph{a priori} how to find $\Ff_1$-max-margin classifiers. In the next section, we show that training an over-parameterized two-layer neural network precisely does that.

\subsection{Training dynamics in the infinite width limit}
\paragraph{Assumptions.}
Given a loss function $\ell:\RR\to \RR_+$, we define the empirical risk associated to a predictor $h_m(\mathbf{w},\cdot)$ of the form Eq.~\eqref{eq:finitewidth} as $\frac1n \sum_{i=1}^n \ell(- y_i h_m(\mathbf{w},x_i))$. Our analysis of the training dynamics relies on the following assumptions on the loss.
\begin{enumerate}
\item[{\sf (A2)}] The loss $\ell$ is differentiable with a locally Lipschitz-continuous gradient. It has an \emph{exponential tail} in the sense that $\ell(u)\sim \ell'(u) \sim \exp(u)$ as $u \to -\infty$, it is strictly increasing and there exists $c>0$ such that $\ell'(u)\geq c$ for $u\geq 0$. 
\end{enumerate}
The main examples are the logistic loss $\ell(u) = \log(1+\exp(u))$ and exponential loss $\ell(u) = \exp(u)$. Note that for our main result Theorem~\ref{th:biasWGF}, we do not assume convexity of the loss since only the tail behavior matters. We make the following assumptions on the feature function, in addition to~{\sf (A1)}.
\begin{enumerate}
\item[{\sf (A3)}] The family $(\phi(\cdot,x_i))_{i\in [n]}$ is linearly independent  and for $i\in [n]$, the function $\phi(\cdot,x_i)$ is differentiable with a Lipschitz-continuous gradient and subanalytic (i.e.,~its graph is locally the linear projection of a bounded semianalytic set).
\end{enumerate}
Let us comment these assumptions. Requiring that the family is linearly independent is equivalent to requiring that arbitrary labels can be fitted on the training input $(x_i)_{i\in [n]}$ within our hypothesis class $\{ x\mapsto \int \phi(\theta,x)\d\nu(\theta)\;;\; \nu \in \Mm_+(\SS^{p-1}) \}$. This assumption is satisfied by ReLU and S-ReLU networks~\citep{bach2017breaking} as soon as $x_i \neq x_{i'}$, $\forall i\neq i'$. The differentiability assumption is the most undesirable one because it excludes ReLU networks (but not S-ReLU networks). Although the training dynamic could potentially be defined without this assumption~\citep{lyu2019gradient}, the proof of Theorem~\ref{th:biasWGF} relies on it. Finally, subanalyticity is a mild assumption required in a technical proof step that invokes Sard's lemma. Functions defined by piecewise polynomials are subanalytic and thus both ReLU and S-ReLU networks satisfy it, see~\cite{bolte2006nonsmooth} for a definition.

\paragraph{Gradient flow of the smooth-margin objective.} In order to obtain simpler proofs we consider maximizing minus the logarithm of the empirical risk, instead of the empirical risk itself. This allows  to directly interpret the training dynamics as maximizing a \emph{smooth-margin} and leads to the same continuous time dynamics up to time reparameterization. We define the function $S:\RR^n\to \RR$ as
\begin{equation}\label{eq:smooth-margin}
S(u)= -  \log\bigg( \frac1n \sum_{i=1}^n \ell(- u_i)\bigg).
\end{equation}
When $\ell$ is the exponential, this function is known as the soft-min or the free energy and is concave \citep{mezard2009information}. Notice however that for now we do not assume that $\ell$ is the exponential but only {\sf (A2)}, in order to cover the case of the logistic function.

With a model of the form in Eq.~\eqref{eq:finitewidth}, this leads to an objective function $F_m:(\RR^p)^m\to \RR$ on the vector of parameters $\mathbf{w} = (w_j)_{j\in [m]}$ defined as 
\(
F_m(\mathbf{w}) = S(\hat h_m(\mathbf{w})),
\)
where we have denoted $\hat h_m(\mathbf{w}) = (y_i\, h_m(\mathbf{w},x_i))_{i\in [n]}$. We consider a (potentially random) initialization $\mathbf{w}(0) \in (\RR^p)^m$ and the (ascending) gradient flow of this objective function, which is a differentiable path $(\mathbf{w}(t))_{t\geq 0}$ starting from $\mathbf{w}(0)$ and such that for all $t\geq 0$,
\begin{equation}\label{eq:discreteGF}
\frac{d}{dt}\mathbf{w}(t) =  m \nabla F_m(\mathbf{w}(t)).
\end{equation}
 Up to the gradient sign, this gradient flow is an approximation of gradient descent~\citep{gautschi1997numerical, scieur2017integration} and stochastic gradient descent (SGD)~\cite[Thm. 2.1]{kushner2003stochastic} with small step sizes.\footnote{Although Theorem~\ref{th:biasWGF} below could be extended to discrete time analysis, this would be of little interest since the result is so far purely qualitative. In simpler settings, we study discrete time dynamics in Sections~\ref{sec:rate} and~\ref{sec:convex}.} Classical results guarantee that under Assumption {\sf (A2-3)}, this gradient flow is uniquely well defined.

\paragraph{Wasserstein gradient flow.}  Taking the point of view presented in Section~\ref{sec:measure}, we may interpret the training dynamics as a path
\(
\mu_{t,m} = \frac1m \sum_{j=1}^m \delta_{w_j(t)}
\)
in $\Pp_2(\RR^p)$. As we now explain, it turns out that this dynamics is a gradient flow for a function defined on $\Pp_2(\RR^p)$, which allows to seamlessly take the limit $m\to \infty$. Let $F$ be the functional on $\Pp_2(\RR^p)$ defined as
\[
F(\mu) = S(\hat h(\mu)),
\]
where similarly as above we define $\hat h:\Pp_2(\RR^p)\to \RR^n$ as $\hat h(\mu) = (y_i\, h(\mu,x_i))_{i\in [n]}$, and let $F'_{\mu}$ be its Fr\'echet derivative at $\mu$, which is represented by the function $F'_{\mu}(w)= \sum_{i=1}^n y_i\phi(w,x_i) \nabla_i S(\hat h(\mu))$. Let us give a definition of Wasserstein gradient flow (tailored to our smooth setting), which will be connected to the training dynamics of Eq.~\eqref{eq:discreteGF} in Theorem~\ref{th:WGF}.

\begin{definition}[Wasserstein gradient flow]\label{def:WGF}
A Wasserstein gradient flow for the functional $F$ is a path $(\mu_t)_{t\geq 0}$ such that there exists a flow $X : \RR_+\times \RR^p \to \RR^p$ satisfying $\mu_t = (X_t)_\# \mu_0$ (where $X_t(\cdot)=X(t,\cdot)$),  $X(0,\cdot)=X_0=\id_{\RR^p}$ and for all $(t,w)\in \RR_+\times \RR^p$,
\begin{equation}\label{eq:flowdef}
\frac{d}{dt} X(t,w) = \nabla F'_{\mu_t}(X(t,w)).
\end{equation}
\end{definition}
It can be directly checked that when $\mu_0$ is discrete, we recover the training dynamics defined in Eq.~\eqref{eq:discreteGF}. In this case, $w_j(t) = X(t,w_j(0))$ is the position (in parameter space) at time $t$ of the hidden unit initialized with parameters $w_j(0)$. The following theorem shows that Wasserstein gradient flows characterize the training dynamics of infinitely wide two-layer neural networks. It is an application of~\citet[Thm. 2.6]{chizat2018global}, see details in Appendix~\ref{app:meanfield} (hereafter, by convergence in $\Pp_2$, we mean weak convergence  and convergence of the second moments~\citep{ambrosio2008gradient}).

\begin{theorem}[Infinite width limit of training]\label{th:WGF}
Under {\sf (A1-3)}, if the sequence $(w_j(0))_{j \in \NN_*}$ is such that $\mu_{0,m}$ converges in $\Pp_2(\RR^p)$ to $\mu_0$, then $\mu_{t,m}$ converges in $\Pp_2(\RR^p)$ to the unique Wasserstein gradient flow of $F$ starting from $\mu_0$. The convergence is uniform on bounded time intervals.
\end{theorem}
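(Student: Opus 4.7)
The plan is to reduce the claim to \citet[Thm.~2.6]{chizat2018global}, which establishes convergence of the empirical measures of many-particle gradient descent to a Wasserstein gradient flow whenever the objective has the structure $F(\mu) = G(\hat h(\mu))$ with $\hat h$ linear in $\mu$ and $G$ smooth, under mild regularity and growth conditions on the features. The three steps are: (i) verify that the training dynamics~\eqref{eq:discreteGF} coincides with the particle evolution induced by the velocity field $\nabla F'_\mu$ at $\mu = \mu_{t,m}$; (ii) check that the feature function $\phi$ and the smooth-margin $S$ fulfill the regularity hypotheses of that theorem under {\sf (A1--3)}; (iii) propagate the assumed $\Pp_2$ convergence of $\mu_{0,m}$ to $\mu_0$ via stability of the flow to obtain uniform convergence on bounded time intervals.

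For step (i), a direct computation gives $\nabla_{w_j} F_m(\mathbf{w}) = \frac{1}{m}\sum_{i=1}^n y_i \nabla_w \phi(w_j,x_i)\,\nabla_i S(\hat h_m(\mathbf{w}))$, hence $m\,\nabla_{w_j} F_m(\mathbf{w}) = \nabla F'_{\mu_{t,m}}(w_j)$ with $F'_\mu(w) = \sum_i y_i \phi(w,x_i)\,\nabla_i S(\hat h(\mu))$. This identifies $w_j(t) = X(t,w_j(0))$ as the characteristics of the velocity field attached to the empirical measure, which is precisely Definition~\ref{def:WGF} specialized to a discrete initial distribution.

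For step (ii), Assumption~{\sf (A2)} ensures that $S$ is $C^1$ with a locally Lipschitz gradient; because of the exponential tail of $\ell$, the coordinates of $\nabla S$ behave like a softmax and are bounded on sets where $\max_i \hat h(\mu)_i$ is controlled from above. Assumption~{\sf (A3)} makes each $\phi(\cdot,x_i)$ of class $C^{1,1}$, so $F'_\mu$ is $C^{1,1}$ with constants depending continuously on $\nabla S(\hat h(\mu))$. Finally, the $2$-homogeneity in {\sf (A1)} gives $|\phi(w,x_i)| \le C\|w\|^2$ and $\|\nabla_w \phi(w,x_i)\| \le C\|w\|$, so the velocity field grows at most linearly in $\|w\|$ on any time interval where $\nabla S$ remains bounded, yielding a Gr\"onwall control on the second moment of $\mu_{t,m}$ that is uniform in $m$.

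The main obstacle is the a priori control of $\hat h(\mu_{t,m})$ uniformly in $m$ on each $[0,T]$: without such a bound, $\nabla S$ could degenerate and the velocity field lose its Lipschitz estimates. I would use the energy identity $\frac{d}{dt}F_m(\mathbf{w}(t)) = m\|\nabla F_m(\mathbf{w}(t))\|^2 \ge 0$ together with the 2-homogeneous chain identity $w\cdot \nabla_w\phi(w,x_i) = 2\phi(w,x_i)$, which turns $\frac{d}{dt}\|w_j(t)\|^2$ into a quantity bounded by $C\cdot F'_{\mu_{t,m}}(w_j)$ with $C$ depending only on $\|\nabla S\|_\infty$ on the relevant regime; combined with the softmax-type bound $\|\nabla S\|_1 \le 1$ coming from the exponential tail, this yields the required uniform-in-$m$ second-moment bound on $[0,T]$. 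Once these estimates are in place, the local Lipschitzness of $w\mapsto \nabla F'_\mu(w)$ in $(w,\mu)$ gives uniqueness of the limiting flow and stability with respect to the initial measure, so invoking \citet[Thm.~2.6]{chizat2018global} produces the uniform convergence on bounded intervals. The remaining technical bookkeeping is deferred to Appendix~\ref{app:meanfield}.
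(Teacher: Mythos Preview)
Your reduction to \citet[Thm.~2.6]{chizat2018global} is exactly the paper's strategy, and steps (i) and (ii) are fine: under {\sf (A2)} the smooth-margin $S$ is differentiable with locally Lipschitz gradient, and under {\sf (A1)}--{\sf (A3)} the feature map $\Phi(w)=(y_i\phi(w,x_i))_i$ is $C^{1,1}_{\mathrm{loc}}$ with linearly growing gradient. There is, however, one genuine omission and one unnecessary detour.

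\textbf{Omission: the compact-support hypothesis.} \citet[Thm.~2.6]{chizat2018global} requires the limit initialization $\mu_0$ to lie in some bounded domain $\Omega_r$, i.e., to be compactly supported, whereas the present statement allows any $\mu_0\in\Pp_2(\RR^p)$. Your step (iii) just ``invokes'' that theorem, but as written it does not apply. The paper closes this gap by exploiting the $2$-homogeneity in {\sf (A1)}: for any $\mu\in\Pp_2(\RR^p)$ distinct from $\delta_0$ there exists a compactly supported $\tilde\mu$ with $\Pi_2(\mu)=\Pi_2(\tilde\mu)$, and since both $F$ and the velocity field $\nabla F'_\mu$ depend on $\mu$ only through $\Pi_2(\mu)$, existence, uniqueness and stability of the flow transfer from $\tilde\mu$ to $\mu$. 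Your Gr\"onwall control on second moments of the particle system does not substitute for this step, because well-posedness of the limiting flow from a non-compact $\mu_0$ is what is at stake.

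\textbf{Detour: the a priori bound on $\nabla S$.} The cited theorem only asks that $\nabla S$ be Lipschitz and bounded on \emph{superlevel sets} of $S$; since the flow is ascending, trajectories remain in one such set and the bound is automatic. This is what the paper checks. Your direct route via $\Vert\nabla S\Vert_1\leq 1$ is only correct for $\ell=\exp$: for a general loss satisfying {\sf (A2)} one has $\sum_i\nabla_iS(u)=\sum_i\ell'(-u_i)/\sum_j\ell(-u_j)$, which need not be $\leq 1$ (it is only asymptotically so). The superlevel-set argument avoids this and makes the second-moment computation with the Euler identity $w\!\cdot\!\nabla_w\phi=2\phi$ superfluous.
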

This limit can be made quantitative using the geodesic convexity estimates of~\cite{chizat2018global}, and the stability results of~\citet[Thm.~11.2.1]{ambrosio2008gradient} but with an exponential dependency in time. In the different setting of the square loss, error estimates for SGD have been derived by~\cite{mei2018mean,mei2019mean}. This limit dynamics covers, but is not limited to, the lazy training dynamics studied by~\citet{li2018learning, jacot2018neural,du2018gradient} which here corresponds to a short time analysis when the initialization has a large variance (see Figure~\ref{fig:lazytogreedy} in Section~\ref{sec:numerics}).

\section{Main result: implicit bias of gradient flow}\label{sec:implicitbias}
We are now in position to state the main theorem of this paper, which characterizes the implicit bias of training infinitely wide two-layer neural networks with a loss with an exponential tail.
\begin{theorem}[Implicit bias]\label{th:biasWGF}
Under {\sf (A1-3)}, assume that $\Pi_2(\mu_0)$ has full support on $\SS^{p-1}$. If $\nabla S(\hat h(\mu_t))$ converges and $\bar \nu_t = \Pi_2(\mu_t)/([\Pi_2(\mu_t)](\SS^{p-1}))$ converges weakly to some $\bar \nu_\infty$, then this limit $\bar \nu_{\infty}$ is a maximizer for the $\Ff_1$-max-margin problem in Eq.~\eqref{eq:maxmargin1}.
\end{theorem}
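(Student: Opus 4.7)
The plan is to combine three ingredients: (i) a reformulation of the dynamics as a reaction-advection equation on $\SS^{p-1}$ via the $2$-homogeneous projection $\rho_t := \Pi_2(\mu_t)$, (ii) concentration of the dual variable $q_t := \nabla S(\hat h(\mu_t))$ on smallest-margin samples coming from the exponential tail of $\ell$, and (iii) a KKT/duality argument for \eqref{eq:maxmargin1}. Writing $w = r\theta$ with $r = \|w\|$ and $\theta \in \SS^{p-1}$, the $2$-homogeneity of $\phi$ and Euler's identity translate the flow defining $\mu_t$ into
\begin{equation*}
\dot r = 2 r V_{q_t}(\theta), \qquad \dot\theta = P_\theta^{\perp} \nabla V_{q_t}(\theta), \qquad V_q(\theta) := \sum_{i=1}^n q_i y_i \phi(\theta, x_i),
\end{equation*}
which pushes forward to a reaction-advection equation on $\SS^{p-1}$ for $\rho_t$ with reaction term $4 V_{q_t}$ and velocity $P_\theta^{\perp} \nabla V_{q_t}$. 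Since $\phi$ is $2$-homogeneous, $\hat h_i(t) = M_t \beta_i(t)$ with $M_t := \rho_t(\SS^{p-1})$ and $\beta_i(t) := y_i \int \phi(\theta, x_i) \d\bar\nu_t(\theta)$, and weak convergence of $\bar\nu_t$ yields $\beta_i(t) \to \beta_i^\infty := y_i \int \phi(\theta, x_i) \d\bar\nu_\infty(\theta)$.

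Next I would establish $\min_i \hat h_i(t) \to +\infty$ and identify the support of $q_\infty$. The linear independence in~{\sf (A3)} makes the data separable in $\Ff_1$, so there is a test direction along which $F(\mu_t)$ can be driven arbitrarily large; combined with the assumed convergence of $q_t$, this forces $M_t \to \infty$ and $\beta_i^\infty > 0$ for every $i$. The exponential-tail assumption $\ell'(u)\sim e^u$ in~{\sf (A2)} then gives
\begin{equation*}
q_i(t) \sim \frac{e^{-M_t \beta_i(t)}}{\sum_j e^{-M_t \beta_j(t)}},
\end{equation*}
so that $M_t \to \infty$ together with convergence of $q_t$ force the normalized limit $\tilde q_\infty := q_\infty / \|q_\infty\|_1 \in \Delta^{n-1}$ to be supported on $\arg\min_i \beta_i^\infty$.

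The hard part is the remaining step: showing that $\bar\nu_\infty$ is supported on $\arg\max_\theta V_{\tilde q_\infty}(\theta)$. Integrating the radial ODE along a trajectory yields $r(t)^2 = r(0)^2 \exp\bigl(4\int_0^t V_{q_s}(\theta(s))\d s\bigr)$, and since $V_{q_t}/\|q_t\|_1 \to V_{\tilde q_\infty}$ uniformly on $\SS^{p-1}$, mass in any sub-maximal region is asymptotically exponentially dominated by mass near the arg-max set; the full-support hypothesis on $\Pi_2(\mu_0)$ ensures a positive initial seed in every neighborhood of that set. Making this rigorous demands tight control of the spherical advection $P_\theta^{\perp}\nabla V_{q_t}$ (which both concentrates mass at maxima and relocates it along the sphere) and ruling out pathological critical sets of $V_{\tilde q_\infty}$; this is where subanalyticity in~{\sf (A3)} together with Sard's lemma enter, ensuring the set of critical values of $V_{\tilde q_\infty}$ has the right structure. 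Once this is done, the primal-dual chain
\begin{equation*}
\gamma_1 \ge \min_i \beta_i^\infty = \sum_i \tilde q_i^\infty \beta_i^\infty = \int V_{\tilde q_\infty} \d\bar\nu_\infty = \max_\theta V_{\tilde q_\infty}(\theta) \ge \gamma_1
\end{equation*}
closes the argument: the first inequality is feasibility of $\bar\nu_\infty$ in \eqref{eq:maxmargin1}, the second uses the support of $\tilde q_\infty$, the third uses the support of $\bar\nu_\infty$, and the last is the weak-duality bound $\gamma_1 \le \min_{\tilde q \in \Delta^{n-1}} \max_\theta V_{\tilde q}(\theta)$. Equality throughout forces $\bar\nu_\infty$ to attain the $\Ff_1$-max-margin, as claimed.
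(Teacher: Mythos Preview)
Your overall strategy matches the paper's: project to the sphere via $2$-homogeneity to obtain a reaction--advection equation for $\rho_t=\Pi_2(\mu_t)$, establish the two complementary slackness conditions (support of $q_\infty$ on the smallest-margin indices, support of $\bar\nu_\infty$ on $\arg\max_\theta V_{q_\infty}$), and close with the primal--dual chain, which is precisely the content of the paper's optimality proposition.

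There is, however, a gap in your argument for $M_t\to\infty$. Separability (the existence of a direction along which $F$ is unbounded above) does not imply that the \emph{gradient flow} realizes this increase: $F(\mu_t)$ is non-decreasing but could a priori converge to a finite limit, and convergence of $q_t$ alone does not rule this out. The paper handles this step with the \emph{same} super-level-set machinery you reserve for ``the hard part''. One first shows $q_\infty\neq 0$ (a short lemma using {\sf (A2)}); linear independence then makes $V_{q_\infty}$ nonzero on $\SS^{p-1}$, and balancedness gives $M:=\max_\theta V_{q_\infty}(\theta)>0$. Subanalyticity plus Sard provide a regular value $M-v$ with $v>0$ small; on the super-level set $K_v=\{V_{q_\infty}\ge M-v\}$, the $\Cc^1$-convergence $V_{q_t}\to V_{q_\infty}$ yields, for $t\ge t_0$, both $V_{q_t}\ge M-2v$ on $K_v$ and an inward-pointing $\nabla V_{q_t}$ on $\partial K_v$. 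The latter means the advection cannot remove mass from $K_v$, so the reaction term alone gives $\tfrac{d}{dt}\rho_t(K_v)\ge 4(M-2v)\rho_t(K_v)$, and Gr\"onwall forces $\rho_t(K_v)\to\infty$. The full-support assumption on $\Pi_2(\mu_0)$ is used here (not only in the concentration step) to guarantee the seed $\rho_{t_0}(K_v)>0$, via the fact that the flow up to time $t_0$ is a diffeomorphism preserving full support. So Sard, subanalyticity and full support are already needed to make the mass diverge. The concentration step is then the same idea applied once more: compare the exponential growth rate inside $K_v$ with the slower rate outside $K_{4v}$ (where again advection cannot bring mass in), and let $v\downarrow 0$ along regular values. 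After that, your outline (including the exponential-tail argument for the support of $q_\infty$ and the final primal--dual chain) goes through as in the paper.
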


We can make the following observations:
\begin{itemize}
\item The strength of this result is that the limit $\bar \nu_\infty$ of a~\emph{non-convex} dynamics is a \emph{global} minimizer of Eq.~\eqref{eq:maxmargin1}. Its proof relies, among other things, on a compatibility between the optimality conditions and the gradient flow dynamics, which is specific to the $2$-homogeneous case. 
\item It is an open question to prove that $\nabla S(\hat h(\mu_t))$ and $\bar \nu_t$ converge is this setting. Note that the unnormalized measure $\nu_t$ does \emph{not} converge, so the global convergence result from~\citet{chizat2018global} (which has a similar assumption regarding the existence of a limit) does not apply.
\item Unlike in the convex case~\citep{soudry2018implicit}, the dynamics does not completely forget where it started from. For instance, when initialized with a Dirac measure, the Wasserstein gradient flow can only converge to a Dirac measure, which is typically not a global minimizer.
\end{itemize}

Together, Theorems~\ref{th:WGF} and~\ref{th:biasWGF} give   asymptotic guarantees for training finite width neural networks.
\begin{corollary}\label{cor:interchange}
Under the assumptions of Theorem~\ref{th:biasWGF}, assume that the sequence $(w_j(0))_{j \in \NN_*}$ is such that $\mu_{0,m}$ converges in $\Pp_2(\RR^p)$ to $\mu_0$. Then, denoting $\bar \nu_{m,t} = \Pi_2(\mu_{m,t})/[\Pi_2(\mu_{m,t})](\SS^{p-1})$, it holds
\[
\lim_{m,t\to \infty} \left(\min_{i\in [n]} y_i \int \phi(\theta,x_i)\d\bar \nu_{m,t}\right) = \gamma_1.
\]
\end{corollary}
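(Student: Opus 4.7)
The plan is to combine Theorem~\ref{th:WGF} (for each fixed $t$, $\mu_{m,t}\to \mu_t$ in $\Pp_2(\RR^p)$ as $m\to\infty$, uniformly on bounded time intervals) with Theorem~\ref{th:biasWGF} ($\bar\nu_t$ converges weakly to an $\Ff_1$-max-margin classifier $\bar\nu_\infty$) via a standard diagonal $\epsilon/2$-argument. The key preliminary observation is that the training-set margin functional $\nu\mapsto \min_{i\in [n]} y_i \int \phi(\theta,x_i)\d\nu$ is continuous on $\Mm_+(\SS^{p-1})$ under weak convergence: each $\phi(\cdot,x_i)$ is continuous and bounded on the compact sphere, so each $\nu\mapsto \int \phi(\theta,x_i)\d\nu$ is weakly continuous, and a minimum of finitely many continuous functionals is continuous.

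\textbf{Two one-parameter limits.} First, from Theorem~\ref{th:biasWGF} together with the margin continuity, the margin at $\bar\nu_t$ converges to the margin at $\bar\nu_\infty$, which by the max-margin property equals $\gamma_1$. Second, for each fixed $t\geq 0$, Theorem~\ref{th:WGF} gives $\mu_{m,t}\to \mu_t$ in $\Pp_2(\RR^p)$ as $m\to\infty$; since this mode of convergence is equivalent to weak convergence plus convergence of second moments, it allows testing against any continuous function with at most quadratic growth. Applying this to $\psi(w)=\|w\|^2 \varphi(w/\|w\|)$ for arbitrary $\varphi\in \Cc(\SS^{p-1})$ (extended by $0$ at the origin) shows, via \eqref{eq:homogeneousprojection}, that $\Pi_2(\mu_{m,t})\to \Pi_2(\mu_t)$ weakly on $\SS^{p-1}$ together with convergence of total masses $[\Pi_2(\mu_{m,t})](\SS^{p-1}) \to [\Pi_2(\mu_t)](\SS^{p-1})$. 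Provided this limiting mass is positive (see obstacle below), $\bar\nu_{m,t}\to \bar\nu_t$ weakly on $\SS^{p-1}$ and the margin at $\bar\nu_{m,t}$ converges to the margin at $\bar\nu_t$ as $m\to\infty$, again by margin continuity.

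\textbf{Diagonal combination and main obstacle.} For any $\epsilon>0$, I would first pick $T=T(\epsilon)$ such that the margin at $\bar\nu_T$ is within $\epsilon/2$ of $\gamma_1$, and then pick $M(T)$ such that for all $m\geq M(T)$ the margin at $\bar\nu_{m,T}$ is within $\epsilon/2$ of the margin at $\bar\nu_T$. Letting $\epsilon_k\to 0$ and taking $t_k=T(\epsilon_k)$, $m_k\geq M(t_k)$ produces sequences $t_k,m_k\to\infty$ along which the claimed limit holds; this is the appropriate interpretation of $\lim_{m,t\to\infty}$, since Theorem~\ref{th:WGF} only guarantees the $m\to\infty$ limit uniformly on bounded time intervals. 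The main obstacle is ensuring that $[\Pi_2(\mu_t)](\SS^{p-1})=\int \|w\|^2 \d\mu_t(w)$ remains strictly positive for all $t\geq 0$ so that the normalized objects $\bar\nu_t$ and, for $m$ large, $\bar\nu_{m,t}$ are well-defined. This should follow from the full-support hypothesis on $\Pi_2(\mu_0)$ combined with the continuity in time of the Wasserstein flow of Definition~\ref{def:WGF}, noting that the $2$-homogeneity of $\phi$ tends to grow the norm of the particles rather than collapse them to the origin; a short argument ruling out mass concentration at $w=0$ along the flow is the one nontrivial technical point.
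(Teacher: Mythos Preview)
Your argument correctly establishes the iterated limit $\lim_{t\to\infty}\lim_{m\to\infty}(\cdot)=\gamma_1$, and indeed this is exactly the first sentence of the paper's own proof. But your reading of ``$\lim_{m,t\to\infty}$'' as merely the existence of \emph{some} diagonal sequence $(m_k,t_k)$ is too weak: the paper explicitly remarks after the corollary that ``limits in $t$ and $m$ can be interchanged so the convergence is not conditioned on a particular scaling,'' i.e.\ the claim is the genuine joint limit. Your $\epsilon/2$ scheme fixes a single time $T=T(\epsilon)$ and a threshold $M(T)$, but says nothing about what happens for $m\geq M(T)$ and $t>T$: the margin of $\bar\nu_{m,t}$ could in principle drop again after time $T$. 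So the other iterated limit $\lim_{m\to\infty}\lim_{t\to\infty}$ (and a fortiori the joint limit) is not covered.

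The missing ingredient is a \emph{stability-in-time} (near-monotonicity) argument for the finite-$m$ dynamics. The paper supplies it by working with the smooth margin $S_{\Vert\nu_{t,m}\Vert}(\hat h(\bar\nu_{t,m}))$: it shows $\frac{d}{dt}S_\beta(\hat h(\bar\nu_t))\geq 0$ at fixed $\beta$ and $\partial_\beta S_\beta(u)\geq -n/\beta^2$, whence
\[
S_{\Vert\nu_{t,m}\Vert}(\hat h(\bar\nu_{t,m})) \;\geq\; S_{\Vert\nu_{t_0,m}\Vert}(\hat h(\bar\nu_{t_0,m})) - \frac{n}{\Vert\nu_{t_0,m}\Vert}\quad\text{for all }t\geq t_0.
\]
Combined with $\Vert\nu_{t,m}\Vert\to\infty$ (so that the smooth margin approximates the true margin), this yields: for every $\epsilon>0$ there exist $t_0,m_0$ such that the margin at $\bar\nu_{m,t}$ is $\geq\gamma_1-\epsilon$ for \emph{all} $m\geq m_0$ and \emph{all} $t\geq t_0$. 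That is the substantive part of the proof, and it is absent from your proposal. The positivity of $[\Pi_2(\mu_t)](\SS^{p-1})$ that you flag as the main obstacle is, by comparison, a minor technicality.
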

Typically, the sequence of initial neurons parameters $(w_j(0))_{j\in \NN_*}$ are sampled from a measure $\mu_0$ that satisfies the support condition of Theorem~\ref{th:biasWGF}, such as a Gaussian distribution. Note that limits in $t$ and $m$ can be interchanged so the convergence is not conditioned on a particular scaling.

\paragraph{Dealing with ReLU.} Let us now state a result which is a first step towards covering the case of ReLU networks, in spite of their non-differentiability\footnote{Added in revision. A similar idea for the initialization of ReLU was proposed independently in~\cite{wojtowytsch2020convergence}.}. Although its assumption (*) is arguably too strong, we  state it in order to point to technical open questions, see details in Appendix~\ref{sec:relucase}. For an input distribution $\rho \in \Pp(\RR^d)$  with a bounded density and bounded support $\Xx$, and $y:\Xx\to \{-1,1\}$ continuous, we consider the population objective
\begin{equation}\label{eq:objectiverelu}
F(\mu) = -\log \Big[ \int_\Xx \exp\Big(-y(x) h(\mu,x)\Big)\d\rho(x)\Big].
\end{equation}
\begin{theorem}\label{th:bias_relu}
There exists $(\mu_t)_{t\geq 0}$ a Wasserstein gradient flow of the objective Eq.~\eqref{eq:objectiverelu} with $\mu_0 = \mathcal{U}(\SS^{d})\otimes \mathcal{U}(\{-1,1\})$, i.e.,~input (resp.~output) weights uniformly distributed on the sphere (resp. on $\{-1,1\}$). If $\nabla S[\hat h(\mu_t)] $ converges weakly in $\Pp(\Xx)$, if $\bar \nu_t = \Pi_2(\mu_t)/([\Pi_2(\mu_t)](\SS^{p-1}))$ converges weakly in $\Pp(\SS^{p-1})$ and if (*) $F'_{\mu_t}$ converges  in $\mathcal{C}^1_{loc}$ to some $F'$ that satisfies the Morse-Sard property (see details in Appendix~\ref{sec:relucase}), then $h(\bar \nu_{\infty},\cdot)$ is a maximizer for 
$
\max_{\Vert f\Vert_{\Ff_1}\leq 1} \min_{x\in \Xx} y(x)f(x).
$
\end{theorem}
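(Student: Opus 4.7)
The plan is to mimic the proof of Theorem~\ref{th:biasWGF}, with two modifications: (i)~a conservation law specific to the ReLU parameterization lets us circumvent the non-differentiability of $\phi$ and the mismatch between the initialization and the full-support hypothesis of Theorem~\ref{th:biasWGF}, and (ii)~the Morse--Sard assumption (*) replaces subanalyticity from {\sf (A3)} at the one step where it was essential.

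First, I would exploit a classical conservation law for $2$-homogeneous ReLU gradient flows. Writing $w=(a,b)\in\RR^{d+1}\times\RR$, the Euler identity $a^\top\partial_a\phi(w,x)=b\,\partial_b\phi(w,x)$ (valid wherever $\phi$ is differentiable) yields $\frac{d}{dt}(\|a(t)\|^2-b(t)^2)=0$ along any gradient flow of the empirical or population risk. The initialization $\|a(0)\|=|b(0)|=1$ therefore confines each particle to the balanced submanifold $\Mm_{\mathrm{bal}}=\{(a,b):\|a\|=|b|\}$. On $\Mm_{\mathrm{bal}}$, $\phi$ is differentiable away from $\{a=0\}$, and integration against the bounded-density measure $\rho$ smooths the kink further, so $F'_{\mu_t}\in \Cc^1$ and Definition~\ref{def:WGF} applies. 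Reparameterizing $\Mm_{\mathrm{bal}}$ by $(\theta,\sigma)\in\SS^{d}\times\{-1,1\}$, the law $\mathcal{U}(\SS^{d})\otimes\mathcal{U}(\{-1,1\})$ is precisely the uniform measure on the reduced ``parameter sphere'', which furnishes the full-support condition needed to transplant the proof of Theorem~\ref{th:biasWGF} onto this reduced geometry.

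Next I would repeat the compatibility step. Under the assumed convergence $\nabla S[\hat h(\mu_t)]\weakto \sigma_\infty$ in $\Pp(\Xx)$ and $F'_{\mu_t}\to F'$ in $\Cc^1_{\mathrm{loc}}$, the limit must take the form $F'(w)=\int_\Xx y(x)\phi(w,x)\d\sigma_\infty(x)$; by $2$-homogeneity its restriction to $\SS^{p-1}$ has a maximum value $\gamma$, and the ``no-escape'' argument of Theorem~\ref{th:biasWGF}, carried out on the balanced reduction, shows that $\bar\nu_\infty$ is supported on $\{\theta\in\SS^{p-1}:F'(\theta)=\gamma\}$. Integrating against $\bar\nu_\infty$ and using $h(\bar\nu_\infty,x)=\int\phi(\theta,x)\d\bar\nu_\infty(\theta)$ yields $\int_\Xx y(x)\,h(\bar\nu_\infty,x)\d\sigma_\infty(x)=\gamma$. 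Combined with the weak-duality bound $\min_{x\in\Xx}y(x)f(x)\le\int y(x)f(x)\d\sigma_\infty(x)\le\gamma\,\|f\|_{\Ff_1}$ valid for every $f\in\Ff_1$, strong duality for the convex program concludes that $\min_{x\in\Xx}y(x)\,h(\bar\nu_\infty,x)=\gamma=\max_{\|f\|_{\Ff_1}\le 1}\min_{x\in\Xx}y(x)f(x)$.

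The Morse--Sard property (*) enters precisely in the ``no-escape'' step: one needs to rule out that $\bar\nu_t$ places persistent mass on near-maximizers of $F'_{\mu_t}$ that lie off the argmax set of the limit $F'$, and the standard way to do this is a Sard-type argument on $F'\vert_{\SS^{p-1}}$. In Theorem~\ref{th:biasWGF} subanalyticity of $\phi$ gave Sard for free; here $F'$ is a limit of functions determined by an \emph{a priori} arbitrary probability measure $\sigma_\infty$, which destroys any analytic structure, so the property must be imposed by hand. The main obstacle---and the reason (*) is arguably too strong---is precisely that without additional regularity of the limit dual variable $\sigma_\infty$ there is no direct way to guarantee Morse--Sard; removing the assumption seems to require either deriving such regularity from the flow itself, or replacing the Sard-style step by a quantitative concentration argument for $\bar\nu_t$ that is stable under weak $\Cc^1_{\mathrm{loc}}$ convergence. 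A secondary technical concern is ensuring that no mass of $\mu_t$ accumulates on the singular stratum $\{a=0\}$, which I would address by a Grönwall estimate on the flow of $\|a(t)\|$ combined with the conservation law.
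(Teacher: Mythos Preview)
Your overall strategy matches the paper's: the conservation law $\frac{d}{dt}(\|a\|^2 - b^2) = 0$ confines the flow to the balanced cone $D = \{\|a\| = |b|\}$, which gives existence of the gradient flow and keeps particles away from the singular set $\{a=0\}$; the proof of Theorem~\ref{th:biasWGF} is then replayed on the reduced sphere $\SS_\pm = \SS^{p-1} \cap D$, with the Morse--Sard hypothesis~(*) replacing subanalyticity at the one step where regular level sets of $J'_\infty$ are needed.

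There is, however, a genuine gap in your concluding step. You establish only \emph{one} of the two complementary slackness conditions---that $\bar\nu_\infty$ is supported on $\arg\max_{\theta} F'(\theta)$---and then appeal to ``strong duality'' to finish. But from what you have, you only get $\gamma_1 \le \gamma$ (since the dual function at $\sigma_\infty$ equals $\gamma$) and $\min_{x} y(x)\, h(\bar\nu_\infty, x) \le \int y\, h(\bar\nu_\infty,\cdot)\, \d\sigma_\infty = \gamma$; neither inequality can be reversed without the \emph{second} slackness condition $\spt(\sigma_\infty) \subset \arg\min_{x\in\Xx} y(x)\, h(\bar\nu_\infty, x)$. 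Strong duality tells you a saddle point exists, not that $(\bar\nu_\infty,\sigma_\infty)$ is one. This is precisely Step~2 in the paper's argument: one first extracts from the no-escape estimate that $\nu_t(\SS_\pm) \to \infty$, hence $\hat h(\mu_t) \to +\infty$ uniformly on $\Xx$, and then invokes a soft-argmin lemma (Lemma~\ref{lem:argminrelu}) to conclude that the weak limit of $\nabla S[\hat h(\mu_t)]$ concentrates on the argmin of $y(\cdot)\, h(\bar\nu_\infty, \cdot)$. Two smaller points also need attention: that $\gamma > 0$ is not automatic and requires an injectivity argument for the ReLU feature map (Lemma~\ref{lem:relu_injectivity} in the paper); and the no-escape argument only constrains the support of $\bar\nu_\infty$ within $\SS_\pm$, so one must separately argue (via Proposition~\ref{prop:equivnorms}, or directly by AM--GM) that $\max_{\SS_\pm} F' = \max_{\SS^{p-1}} F'$ in order to match the optimality conditions on the full sphere.
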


\section{Insights on the convergence rate and choice of step-size}\label{sec:rate}
While making Corollary~\ref{cor:interchange} quantitative in terms of number of neurons and the number of iterations is left as an open question, it is of practical importance to better understand the effect of the choice of step-size. In this section, we look at a simplified dynamics where the direction of each parameter $w_j(t)$ is fixed after initialization and only its magnitude evolves. A complete discrete-time analysis is possible in this case, using tools from convex analysis. 

We consider a model of the form of Eq.~\eqref{eq:finitewidth} but with $w_j(t)$ written as $r_j(t)\theta_j$, where $r_j(t) \in \RR_+$ is trained and $\theta_j \in \SS^{p-1}$ is fixed at initialization. Plugging this model into the soft-min loss~\eqref{eq:smooth-margin} yields an objective function $F_m : \RR_+^m\to \RR$ defined as
$$
F_m(r) = -\log\bigg( \frac1n\sum_{i=1}^n \exp\bigg(- \frac1m \sum_{j=1}^m z_{i,j} r^2_j\bigg)\bigg),
$$
where $z_{i,j} = y_i\phi(\theta_j,x_i)$ are the signed fixed features. We focus on the exponential loss $\ell=\exp$ in this section and the next one for simplicity. We study the gradient ascent dynamics with initialization $r(0) \in \RR_+^m$ and sequence of step-sizes $(\eta(t))_{t\in \NN}$,
\[
r(t+1) = r(t) + \eta(t) m \nabla F_m(r(t)).
\]
This dynamics is studied by~\citet{gunasekar2018characterizing} where it is shown to converge to a max $\ell_1$-margin classifier without a rate. In the next proposition, we prove  convergence of the best iterate to maximizers at an asymptotic rate $\log(t)/\sqrt{t}$, by exploiting an analogy with online mirror ascent. 

\begin{proposition}\label{prop:OMA}
Let $a_j(t) = r_j(t)^2/m$ for $j\in [m]$, $\beta(t) = \Vert a(t)\Vert_1$ and $\bar a(t) = a(t)/\beta(t)$. For the step-sizes $\eta(t) = 1/(16\Vert z \Vert_\infty \sqrt{t+1})$ and a uniform initialization $r(0)\propto \mathbf{1}$, it holds
$$
\max_{0\leq s\leq t-1} \min_{i\in [n]} z_i^\top \bar a(s)  \geq \gamma_1^{(m)} - \frac{\Vert z\Vert_\infty}{\sqrt{t}}(8\log(m) + \log(t)+1) - \frac{4 B \log n}{\sqrt{t}} .
$$
where $\gamma_1^{(m)} := \max_{a \in \Delta^{m-1}}\min_{i \in [n]} z_i^\top a$ and $B:=\sum_{s=0}^{\infty} \frac{1}{\beta(s)\sqrt{s+1}}<\infty$ when $\gamma_1^{(m)}>0$.
\end{proposition}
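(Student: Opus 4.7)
The plan is to recast the dynamics as a second-order variant of the exponentiated-gradient (multiplicative weights) algorithm on the simplex and then run a regret analysis. Set $\tilde F(a) = -\log\bigl(\tfrac{1}{n}\sum_i e^{-z_i^\top a}\bigr)$ on $\RR_+^m$. Two-homogeneity of the model in $r$ gives $F_m(r) = \tilde F(a(r))$ and, by the chain rule, $\nabla_{r_j} F_m(r) = (2 r_j/m)\, g_j(a)$ with $g_j(a) = \sum_i q_i(a)\, z_{i,j} = (Z^\top q(a))_j$, where $q_i(a) \propto e^{-z_i^\top a}$ is the softmin distribution over samples and $Z = (z_{i,j})$. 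The gradient ascent step therefore factorizes as
\[
r_j(t+1) = r_j(t)\bigl(1 + 2\eta(t)\, g_j(t)\bigr), \qquad a_j(t+1) = a_j(t)\bigl(1 + 2\eta(t)\, g_j(t)\bigr)^2.
\]
Because $\eta(t)\|z\|_\infty \le 1/16$ and $|g_j(t)| \le \|z\|_\infty$, the factor $1+2\eta g_j$ stays in $(1/2, 3/2)$, so signs are preserved and every logarithm below is well-defined.

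\textbf{Potential argument giving a weighted regret bound.} Let $\Phi(t) = \log \beta(t)$. From $(1+x)^2 \le e^{2x}$ ($x > -1$) and $e^y \le 1+y+y^2$ ($|y|\le 1$), an upper bound is
\[
\Phi(t+1) - \Phi(t) \;\le\; \log \sum_j \bar a_j(t) e^{4\eta(t) g_j(t)} \;\le\; 4\eta(t)\, \bar a(t)^\top g(t) + 16\eta(t)^2 \|z\|_\infty^2.
\]
In the other direction, $\log(1+x) \ge x - x^2$ for $x \ge -1/2$ yields, for every $j$,
\[
\log\frac{a_j(T)}{a_j(0)} \;=\; \sum_{t=0}^{T-1} 2\log(1 + 2\eta(t) g_j(t)) \;\ge\; 4\sum_t \eta(t) g_j(t) - 8\|z\|_\infty^2 \sum_t \eta(t)^2.
\]
Combining these via $a_j(T) \le \beta(T)$, $a_j(0) = 1/m$, $\beta(0)=1$, and taking a convex combination over $j$ with weights $a^\star \in \Delta^{m-1}$ produces the weighted regret inequality
\[
\sum_{t=0}^{T-1} \eta(t) \bigl[(a^\star)^\top g(t) - \bar a(t)^\top g(t)\bigr] \;\le\; \tfrac{1}{4}\log m + 6\|z\|_\infty^2 \sum_t \eta(t)^2.
\]

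\textbf{From regret to margin.} Let $a^\star$ be an $\ell_1$ max-margin classifier on $\Delta^{m-1}$. Since $g(t) = Z^\top q(t)$ with $q(t) \in \Delta^{n-1}$, weak duality gives $(a^\star)^\top g(t) = q(t)^\top Z a^\star \ge \min_i (Za^\star)_i = \gamma_1^{(m)}$. For the iterate term, the Fenchel identity for the softmin reads $q(t)^\top u = -\tfrac{1}{\beta(t)}\log\sum_i e^{-\beta(t) u_i} + H(q(t))/\beta(t)$ with $u_i = z_i^\top \bar a(t)$; together with $H(q) \le \log n$ and $-\tfrac{1}{\beta}\log\sum e^{-\beta u_i} \le \min_i u_i$, this yields $\bar a(t)^\top g(t) \le \min_i z_i^\top \bar a(t) + \log n / \beta(t)$. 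Plugging in, and using that every summand on the left is nonnegative so the best iterate upper-bounds the $\eta$-weighted average,
\[
\max_{0 \le s \le T-1} \min_i z_i^\top \bar a(s) \;\ge\; \gamma_1^{(m)} - \frac{\tfrac{1}{4}\log m + 6\|z\|_\infty^2 \sum_t \eta(t)^2 + \log n \sum_t \eta(t)/\beta(t)}{\sum_t \eta(t)}.
\]
Substituting $\eta(t) = 1/(16\|z\|_\infty \sqrt{t+1})$ and using $\sum_{t=0}^{T-1}\eta(t) \ge \sqrt{T}/(8\|z\|_\infty)$, $\sum \eta(t)^2 \le (1+\log T)/(256\|z\|_\infty^2)$, and $\sum \eta(t)/\beta(t) = B/(16\|z\|_\infty)$ gives the announced rate up to absolute constants. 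Finiteness of $B$ when $\gamma_1^{(m)} > 0$ follows a posteriori by bootstrapping: the same bound gives a strictly positive best-iterate margin, which plugged back into $\Phi(T) \ge \Phi(0) + c\, \gamma_1^{(m)} \sum_t \eta(t) \sim \sqrt T$ forces $\beta(t)$ to grow at least like $\exp(c \gamma_1^{(m)} \sqrt t)$, making $1/(\beta(t)\sqrt{t+1})$ summable.

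\textbf{Main obstacle.} The delicate step is the third one: the multiplicative-weights regret controls the $q(t)$-averaged quantity $q(t)^\top Z \bar a(t)$, whereas the target is the worst-case margin $\min_i z_i^\top \bar a(t)$. The softmin--min sandwich bridges the two at the cost of $\log n / \beta(t)$ per step, which is summable after weighting by $\eta(t) \sim 1/\sqrt t$ only because the growth of $\beta$ (itself a consequence of the positive margin) is fast; this coupling between primal margin and dual concentration is precisely what produces the $B \log n / \sqrt t$ term and is the technical content of the argument.
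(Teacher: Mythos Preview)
Your argument is essentially the paper's: both recognize that the squared-parameter update $a_j(t+1)=a_j(t)(1+2\eta g_j)^2$ is a second-order-perturbed multiplicative-weights step on the simplex, run a regret analysis against the $\ell_1$ max-margin point $a^\star$, and then absorb the gap between the smoothed quantity $\bar a^\top g$ and the hard margin $\min_i z_i^\top\bar a$ via the $\log n/\beta(t)$ correction. The paper phrases the regret step through the Bregman/mirror-descent machinery (telescoping $D(a^\star,\bar a(t))$ and invoking concavity of $G_\beta$), while you use the equivalent Hedge-style potential $\Phi(t)=\log\beta(t)$ together with coordinate-wise bounds on $\log a_j(T)$; the resulting inequality and constants are the same up to small factors (your claim $\sum_{t<T}\eta(t)\ge\sqrt T/(8\|z\|_\infty)$ is slightly too optimistic, but this only affects constants).

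One point needs tightening. Your justification of $B<\infty$ is circular as written: you invoke ``the same bound gives a strictly positive best-iterate margin'' to lower-bound $\Phi(T)$, but that bound already contains $B$, and in any case a positive \emph{best}-iterate margin says nothing about $\bar a(t)^\top g(t)$ at other times. The fix is immediate with ingredients you already have: from $(1+x)^2\ge 1+2x$ and $\log(1+u)\ge u-u^2$ you get $\Phi(t+1)-\Phi(t)\ge 4\eta(t)\,\bar a(t)^\top g(t)-16\eta(t)^2\|z\|_\infty^2$; summing and inserting your regret inequality $\sum_t\eta(t)\,\bar a(t)^\top g(t)\ge\gamma_1^{(m)}\sum_t\eta(t)-\tfrac14\log m-6\|z\|_\infty^2\sum_t\eta(t)^2$ gives directly $\log\beta(T)\ge c\,\gamma_1^{(m)}\sqrt T - O(\log m+\log T)$, hence $\beta(T)\ge\exp(c'\gamma_1^{(m)}\sqrt T)$ and $B<\infty$. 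This is in fact cleaner than the paper's route, which first establishes $\beta(t)\to\infty$ via a separate auxiliary mirror-ascent on a scaled $\ell_1$-ball (Lemma~\ref{lem:betaunbounded}) and only afterwards extracts the growth rate (Lemma~\ref{lem:asympbeta1}).
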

In the proof of Lemma~\ref{lem:betaunbounded}, it can be seen that our bound on $B$ grows to $\infty$ as $\gamma_1^{(m)}$ goes to zero.

\paragraph{Proof idea.} To prove Proposition~\ref{prop:OMA}, we consider the family of \emph{smooth-margin} functions 
$$
G_\beta(a) =  -\frac1\beta \log\Big( \frac1n\sum_{i=1}^n \exp\big(-  \beta \sum_{j=1}^m z_{i,j} a_j\big)\Big),
$$
and we show that $\bar a(t)$ approximately follows \emph{online mirror ascent} for the sequence of concave functions $G_{\beta(t)}$ in the simplex $\Delta^{m-1}$ with step-sizes $\eta(t)$. It then only remains to apply classical bounds for mirror descent and use the fact that $\vert \min_{i\in [n]} z_i^\top a - G_\beta(a)\vert \leq \log (n) /\beta$.  This algorithm thus implicitly performs online optimization on the regularization path. It is also analogous to smoothing techniques in non-smooth optimization~\citep{nesterov2005smooth}.

\paragraph{Continuous limit.} Using the notations from Section~\ref{sec:infinitelywidenetworks}, the dynamics $\sum_{j=1}^m \bar a_j(t)\delta_{\theta_j}$  solves
$$
\gamma_1^{(m)} := \max_{\substack{\nu\in\Mm_+(\SS^{p-1})\\ \nu(\SS^{p-1})\leq 1}}  \min_{i\in [n]} y_i \int_{\SS^{p-1}} \phi(\theta,x_i)\d\nu(\theta) \quad\text{subject to}\quad \text{$\nu$ supported on  $\{\theta_j\}_{j\in [m]}$}.
$$
When $\frac1m \sum_{j=1}^m \delta_{\theta_j}$ converges to the uniform measure on the sphere, we thus recover the same implicit bias as in Theorem~\ref{th:biasWGF} and $\gamma_1^{(m)}\to \gamma_1$ (note that the logarithmic dependency in $m$ in Proposition~\ref{prop:OMA} could be removed with a slightly finer analysis as done in~\citet{chizat2019sparse}). While functions in $\Ff_1$ may be well-approximated with a small number of neurons~\citep{bach2017breaking,jones1992simple}, this is not anymore true if the positions $\{\theta_j\}_{j\in [m]}$ of those neurons are fixed a priori (see~\cite{barron1993universal} for exponential lower bounds in a similar setting). In Theorem~\ref{th:biasWGF}, positions are allowed to vary during training: this makes its setting more challenging but also much more relevant.

\section{Training only the output layer}\label{sec:convex}
For two-layer neural networks, it is instructive to compare the implicit bias of training both layers (as in Section~\ref{sec:implicitbias}) versus that of training only the output layer, the input layer being initialized randomly and fixed. This model gives the objective function $F : \RR^m\to \RR$ defined as
\[
F(r) = -\log \bigg( \frac1n \sum_{i=1}^n \exp\bigg( -\frac1m \sum_{j=1}^m z_{i,j} r_j \bigg) \bigg),
\] 
where $z_{i,j} = y_i \sigma(b_j + x_i^\top c_j)$ is the signed output of neuron $j$ for the training point $i$ and $\sigma:\RR\to\RR$ is the non-linearity, e.g., $\sigma(u)=\max\{0,u\}$ for ReLU networks. We study the gradient ascent dynamics with initialization $r(0)\in \RR^m$ and sequence of step-sizes $(\eta(t))_{t\in \NN}$:
$$
r(t+t) = r(t) +\eta(t) m \nabla F(r(t)).
$$
\citet{soudry2018implicit} show that for a step-size of order $1/\sqrt{t}$, this dynamics converges in $O(\log(t)/\sqrt{t})$ to a max $\ell_2$-margin classifier. Next, we show that it converges in $O(1/\sqrt{t})$ for larger, non-vanishing step-sizes, with a different proof technique. The fact that the algorithm converges at essentially the same speed for very different step-sizes shows an advantageous self-regularizing property.

\begin{proposition}\label{prop:PGD}
Let $a(t) = r(t)/m$, $\beta(t) = \max\{ 1, \max_{0\leq s\leq t} \sqrt{m}\Vert a(t)\Vert_2\}$ and $\bar a(t) = a(t)/\beta(t)$. Assume  $\gamma_2^{(m)} := \max_{\sqrt{m}\Vert a\Vert_2\leq 1} \min_{i\in [n]} z_i^\top a>0$. For the step-sizes $\eta(t) = \beta(t) {\sqrt{2}}/(\Vert z\Vert_\infty\sqrt{t+1})$ and initialization $r(0)=0$, it holds
$$
\max_{0\leq s\leq t-1}\min_{i\in [n]} z_i^\top \bar a(s) \geq \gamma_2^{(m)} -  \frac{\Vert z\Vert_\infty}{\sqrt{t}}\Big( 2\sqrt{2} +\frac{\sqrt{3}\log n}{\gamma_2^{(m)}} \Big) .
$$
\end{proposition}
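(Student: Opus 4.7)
The plan is to recognize that, in the normalized variable $\bar a(t) = a(t)/\beta(t)$ with $a(t) = r(t)/m$, the dynamics is \emph{exactly} an online projected gradient ascent on the sequence of smooth-margin functions $G_{\beta(t)}$ over the ball $B := \{x\in \RR^m : \sqrt m \|x\|_2 \le 1\}$. Writing $G_\beta(x) := -\frac{1}{\beta}\log(\frac{1}{n}\sum_i e^{-\beta z_i^\top x})$ as in the proof of Proposition~\ref{prop:OMA}, the identity $G_1(\beta x) = \beta G_\beta(x)$ gives $\nabla G_1(a(t)) = \nabla G_{\beta(t)}(\bar a(t))$; moreover, since $\beta(t+1)$ is a running maximum of $\sqrt m \|a(s)\|_2$, the normalization $a(t+1)/\beta(t+1)$ coincides exactly with the Euclidean projection $\Pi_B$ applied to $\bar a(t) + \lambda(t)\nabla G_{\beta(t)}(\bar a(t))$, where $\lambda(t) = \sqrt 2/(m\|z\|_\infty\sqrt{t+1})$. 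The adaptive step-size $\eta(t)\propto\beta(t)$ was crafted precisely so that the factor $\beta(t)$ cancels, leaving a classical $O(1/\sqrt{t+1})$ OCO step-size independent of $\beta$.

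I would then apply the standard non-increasing-step-size regret bound with comparator $v^*$ (the $\ell_2$-max-margin direction, $\sqrt m\|v^*\|_2 = 1$). The diameter bound $\|\bar a(t) - v^*\|_2^2 \le 4/m$ and the gradient bound $\|\nabla G_{\beta(t)}(\bar a(t))\|_2 \le \sqrt m\|z\|_\infty$ (immediate since the gradient is a convex combination of the $z_i$), together with $\sum_{t=0}^{T-1} 1/\sqrt{t+1} \le 2\sqrt T$, give $\sum_{t=0}^{T-1}[G_{\beta(t)}(v^*) - G_{\beta(t)}(\bar a(t))] \le 2\sqrt 2 \|z\|_\infty \sqrt T$. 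Combined with $G_{\beta(t)}(v^*) \ge \min_i z_i^\top v^* = \gamma_2^{(m)}$, the standard log-sum-exp sandwich $\min_i z_i^\top x \ge G_\beta(x) - \log(n)/\beta$, averaging over $s < T$, and using $\max \ge \mathrm{avg}$, yields
\[
\max_{0 \le s \le T-1}\min_i z_i^\top \bar a(s) \ge \gamma_2^{(m)} - \frac{2\sqrt 2 \|z\|_\infty}{\sqrt T} - \frac{\log n}{T}\sum_{s=0}^{T-1}\frac{1}{\beta(s)}.
\]

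The main obstacle is controlling the residual $\frac{\log n}{T}\sum_s 1/\beta(s)$, which requires a growth lower bound on $\beta(s)$. The clean fact that makes this work is that $\nabla G_1(a) = \sum_i p_i(a) z_i$ is a convex combination of the $z_i$, so $\nabla G_1(a)^\top v^* \ge \min_i z_i^\top v^* = \gamma_2^{(m)}$ for \emph{every} $a\in\RR^m$. Summing along the trajectory $a(s) = \sum_{t<s}(\eta(t)/m)\nabla G_1(a(t))$ and combining with $\|a(s)\|_2 \ge \sqrt m\, a(s)^\top v^*$ (from $\|v^*\|_2=1/\sqrt m$) and the crude lower bound $\eta(t)/m \ge \sqrt 2/(m\|z\|_\infty\sqrt{t+1})$ from $\beta(t)\ge 1$ gives $\beta(s) \ge \Omega(\gamma_2^{(m)}\sqrt s/\|z\|_\infty)$, which is tight enough to bound the residual by $O(\|z\|_\infty\log n/(\gamma_2^{(m)}\sqrt T))$ and to produce the stated form $\frac{\|z\|_\infty}{\sqrt T}(2\sqrt 2 + O(\log n/\gamma_2^{(m)}))$. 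The conceptual subtlety is the seemingly circular dependence between $\beta(t)$ and $\eta(t)$; fortunately, the crude substitution $\beta(t)\ge 1$ in this one step is already strong enough to close the argument.
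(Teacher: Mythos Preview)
Your proposal is correct and follows essentially the same approach as the paper: the reduction to online projected gradient ascent on the ball $\{a:\sqrt{m}\|a\|_2\le 1\}$ with effective step-size $\sqrt{2}/(m\|z\|_\infty\sqrt{t+1})$, the standard regret bound yielding the $2\sqrt{2}\|z\|_\infty/\sqrt t$ term, the log-sum-exp sandwich producing the $\frac{\log n}{t}\sum_s 1/\beta(s)$ residual, and the growth lower bound on $\beta$ obtained by projecting the trajectory onto the max-margin direction $v^*$ via $\nabla G_1(a)^\top v^*\ge \gamma_2^{(m)}$. The paper writes the growth estimate as a self-referential inequality $\beta(t)\ge C\sum_{s<t}\beta(s)/\sqrt{s+1}$ before resolving it, whereas you substitute $\beta(t)\ge 1$ into $\eta(t)$ directly; both yield the same $\beta(s)\gtrsim \gamma_2^{(m)}\sqrt{s}/\|z\|_\infty$ and hence the $\sqrt{3}\log n/\gamma_2^{(m)}$ constant.
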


\paragraph{Proof idea.} Similarly to the proof of Proposition~\ref{prop:OMA}, we show that $\bar a(t)$ follows an \emph{online projected gradient ascent} for the sequence of functions $G_{\beta(t)}$ in the ball $\{ a \in \RR^m \;;\; \Vert a\Vert_2 \leq 1/\sqrt{m}\}$ and with step-sizes $\eta(t)/(m\beta(t))$. From there, we use standard optimization results and prove that $\beta(t)\to \infty$ to conclude.
Note that a different reduction to mirror descent for this dynamics was also exhibited by~\citet{ji2019refined} and used to derive tight convergence rates but with a much smaller step-size than in Proposition~\ref{prop:PGD} and with a different Bregman divergence.

\paragraph{Random features for kernel max-margin classifier.} Using the notations from Section~\ref{sec:infinitelywidenetworks}, the dynamics $(r_j)_{j\in [m]}$ converges to a solution to
$$
\gamma_2^{(m)} = \max_{g \in L^2(\tau_m)} \min_{i\in [n]} y_i \int g(b,c) \sigma(b + x_i^\top c) \d\tau_m(b,c) \quad \text{subject to}\quad \Vert g\Vert_{L^2(d\tau_m)}\leq 1,
$$
where $\tau_m = \frac1m \sum_{j=1}^m \delta_{(b_j,c_j)}$. Typically, the input layer parameters are sampled from a distribution $\tau \in \RR^{1+d}$, which corresponds to a random feature approximation for the $\Ff_2$-max-margin problem of Eq.~\eqref{eq:maxmargin2} and we have $\gamma_2^{(m)}\to \gamma_2$. In stark contrast to the space $\Ff_1$, functions in $\Ff_2$ can be well approximated with few random features even in high dimension. See~\citet{rahimi2008random,bach2017equivalence} for a analysis of the number of features needed for an approximation with error~$\varepsilon$, typically of order $1/\varepsilon^2$.

\section{Dimension independent generalization bounds}\label{sec:generalization}
In this section, we give arguments showing the favorable statistical properties of the bias exhibited in Theorem~\ref{th:biasWGF} for ReLU networks. We propose to measure the complexity of the dataset $S_n = (x_i,y_i)_{i=1}^n$ with the following \emph{projected interclass distance} defined, for $r\in [d]$, as 
\begin{equation}\label{eq:interclass}
\Delta_r(S_n) :=  \sup_{P} \left\{ \inf_{y_i\neq y_{i'}}  \Vert P(x_i) - P(x_{i'})\Vert_2 \;;\;  \text{$P$ is a rank-$r$ orthogonal projection}\right\}.
\end{equation}
For each dimension $r$, it looks for the $r$-dimensional subspace which maximizes the distance between the two classes. Interclass distance often appears in the statistical analysis of classification problems~\citep[see, e.g.,][]{li2018learning} often complemented with ``clustered data'' assumptions. Our definition is designed to capture the fact that if $\Delta_r\approx \Delta_d$ for $r\ll d$, then there is a hidden structure which can be exploited for statistical efficiency.

\begin{theorem}[Generalization bound]\label{th:generalization}
For any $\epsilon \in (0,1)$ and $r\in [d]$, there exist $C(r),C_\epsilon(r)>0$ such that the following holds. If $(x,y)\sim \mathbb{P}$ is such that for some $R>0$ and $\Delta_r({\mathbb{P}})\leq C(r)$, it holds $\Delta_r(S_n) \leq \Delta_r(\mathbb{P})$ and $\Vert x\Vert_2\leq R$ almost surely, then it holds with probability at least $1-\delta$ over the choice of i.i.d.~samples $S_n = (x_i,y_i)_{i=1}^n$, for $f$ the $\Ff_1$-max-margin classifier on $S_n$,
\[
\mathbb{P}[yf(x)<0] \leq \frac{C_\epsilon(r)}{\sqrt{n}} \left(\frac{R}{\Delta_r(\mathbb{P})}\right)^{\frac{r+3}{2-\epsilon}} + \sqrt{\frac{\log(B)}{n}} + \sqrt{\frac{\log(1/\delta)}{2n}}
\]
where $B=\log_2(4(R+1)C_2(r))+(r+2)\log_2(R/\Delta_r(\mathbb{P}))$. The same bound applies to the $\Ff_2$-max-margin classifier for $r=d$.
\end{theorem}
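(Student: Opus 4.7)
The plan is to combine a dimension-free margin-based generalization bound for $\Ff_1$ with an explicit construction of a low-norm witness classifier that exploits the hidden low-dimensional structure captured by $\Delta_r$. The first ingredient is a standard margin bound of the form
\[
\mathbb{P}[yf(x)<0]\;\lesssim\;\frac{\Vert f\Vert_{\Ff_1}}{\gamma_n(f)}\cdot\hat{\mathcal R}_n+\sqrt{\tfrac{\log(B)}{n}}+\sqrt{\tfrac{\log(1/\delta)}{2n}},
\]
where $\gamma_n(f)=\min_i y_if(x_i)$ and $\hat{\mathcal R}_n$ denotes the empirical Rademacher complexity of the unit ball of $\Ff_1$ on inputs of $\ell_2$-norm at most $R$. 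The key point is that $\hat{\mathcal R}_n=O(R/\sqrt{n})$, \emph{independently of the ambient dimension} $d$: this follows from the representation of $\Ff_1$-functions as integrals of $\phi(\theta,\cdot)$ against a probability measure on $\SS^{p-1}$, the $1$-homogeneity of $(\cdot)_+$, and the standard contraction lemma. The $\sqrt{\log(B)/n}$ term arises from a geometric union bound needed to make the ratio $\Vert f\Vert_{\Ff_1}/\gamma_n(f)$ uniform, together with a net argument on the Grassmannian of rank-$r$ projections appearing in the construction below.

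The core of the proof is to lower bound the max-margin $\gamma_1$ attained on $S_n$. By the definition~\eqref{eq:maxmargin1} of $\gamma_1$ as a supremum, it suffices to exhibit a single witness $f^\star\in\Ff_1$ that correctly classifies $S_n$: then $\gamma_1\geq\gamma_n(f^\star)/\Vert f^\star\Vert_{\Ff_1}$. I would fix an orthogonal projection $P^\star$ of rank $r$ approximately attaining the supremum in~\eqref{eq:interclass}, so that the projected point clouds $\{P^\star x_i:y_i=\pm 1\}$ are separated by distance $\Delta:=\Delta_r(S_n)$ inside a ball of radius $R$ in the $r$-dimensional range of $P^\star$. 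Then on $\RR^r$ I construct a smooth function $g:\RR^r\to[-1,1]$ taking value $\pm 1$ on the two projected classes, for instance by convolving the $\pm 1$ labeling with a smooth mollifier of width $\Delta/2$; this yields pointwise derivative bounds $\Vert\partial^\alpha g\Vert_\infty\lesssim\Delta^{-\vert\alpha\vert}$ for every multi-index $\alpha$, and the lift $f^\star(x):=g(P^\star x)$ classifies $S_n$ with margin at least $1$.

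The remaining step is to bound $\Vert f^\star\Vert_{\Ff_1}$ by a quantity depending on $r$ but not on $d$. Composition with the isometric projection $P^\star$ does not increase the variation norm, so it is enough to control $\Vert g\Vert_{\Ff_1}$ intrinsically on an $r$-dimensional ball. For this I would invoke an approximation theorem for ReLU networks in the spirit of~\citet{bach2017breaking}: functions in a Sobolev-type space $W^{s,1}$ on an $r$-dimensional ball with $s$ slightly above $(r+1)/2$ belong to $\Ff_1$, with norm controlled by their Sobolev norm. Substituting the derivative estimates from the previous paragraph gives $\Vert g\Vert_{\Ff_1}\lesssim C_\epsilon(r)(R/\Delta)^{(r+3)/(2-\epsilon)}$. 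Combining with the Rademacher bound from the first paragraph, and using the hypothesis $\Delta_r(S_n)\leq\Delta_r(\mathbb{P})$ to replace the sample interclass distance by its population analogue, yields the leading term of the bound. The case $r=d$ for $\Ff_2$ follows the same blueprint with the ReLU approximation theorem replaced by its RKHS counterpart, noting that the $\Ff_2$-unit ball also has $O(R/\sqrt{n})$ Rademacher complexity.

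The hardest step is the third one: obtaining the \emph{sharp} exponent $(r+3)/(2-\epsilon)$, rather than a generic polynomial in $R/\Delta$, requires invoking the approximation theorem exactly at the critical Sobolev regularity $s\approx(r+1)/2$, a borderline case in classical embedding theory where one loses the $\epsilon$ factor via Besov interpolation or a logarithmic correction in the embedding constant. A secondary subtlety is making the generalization bound uniform in the choice of $P^\star$ and in the realized margin, which must be handled via a covering net on the Grassmannian and a geometric grid on $\gamma_n$; the cardinality of these nets is what produces the quantity $B\sim(R/\Delta_r(\mathbb{P}))^{r+2}$ appearing under the logarithm.
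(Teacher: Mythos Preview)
Your overall strategy---a margin-based generalization bound with a Rademacher complexity estimate for the unit ball of $\Ff_1$, combined with a lower bound on $\gamma_1$ obtained from an explicit witness built through a low-dimensional projection---matches the paper's approach. The differences are in the execution.

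For the witness construction, the paper takes a more direct route than your mollification-plus-Sobolev plan. It builds a \emph{Lipschitz} (not smooth) witness directly from the distance functions to the two projected classes,
\[
f_r(x)=2\Big(1-\tfrac{2}{\Delta_r}\dist_{P(D_+)}(Px)\Big)_+-2\Big(1-\tfrac{2}{\Delta_r}\dist_{P(D_-)}(Px)\Big)_+,
\]
and then invokes the Lipschitz-approximation result \citet[Prop.~6]{bach2017breaking} in $\Ff_2$ on the $r$-dimensional range of $P$. The exponent $(r+3)/(2-\epsilon)$ falls out by solving Bach's approximation inequality for the $\Ff_2$-norm budget $N$, the $\epsilon$ arising from dominating a $\log$ factor by a small power---no borderline Sobolev or Besov interpolation is needed. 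The passage from $\Ff_2$ over $\RR^r$ to $\Ff_1$ over $\RR^d$ then uses exactly the projection-invariance you mention (\citet[Sec.~4.5]{bach2017breaking}). Your route could presumably be made to work, but the paper sidesteps the step you yourself identify as hardest.

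Your account of the $B$ term, however, is a genuine misunderstanding. There is no covering net on the Grassmannian and no need for uniformity in $P^\star$: the projection is used only to build a single witness that lower bounds the deterministic sample quantity $\gamma_1$, and then disappears from the argument. The $\sqrt{\log(B)/n}$ term comes entirely from the $\sqrt{\log(\log_2(4C/\gamma))/n}$ correction in the Koltchinskii--Panchenko margin bound (Theorem~\ref{thm:margingene}), applied with $C=R+1$ the sup-norm bound over the unit $\Ff_1$-ball and $\gamma=\gamma_1$. Plugging in the margin lower bound gives $\log_2(4C/\gamma_1)$ of the order of $\log_2(4(R+1)C_\epsilon(r))+(r+2)\log_2(R/\Delta_r)$, which is the stated $B$.
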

\paragraph{Proof idea.} We first lower-bound the margins $\gamma_1$ and $\gamma_2$ in terms of $\Delta_r(S_n)$ and then apply margin-based generalization bounds~\citep{koltchinskii2002empirical} and bounds on the Rademacher complexity of the unit ball of $\Ff_1$ and $\Ff_2$.

The rate $n^{-1/2}$ is suboptimal (an exponential decay of the test error is possible in this context~\citep{pillaud2018exponential}), but this statement is a strong non-asymptotic bound:  for the $\Ff_1$-max-margin classifier, $d$ does not appear in the exponent of the ratio $R/\Delta_r(\mathbb{P})$, which characterizes the difficulty of the problem. Related generalization bounds are given by~\citet{wei2019regularization}, where a factor $d$ improvement for the $\Ff_1$ versus $\Ff_2$-max-margin classifier is shown on a specific example. Also~\citet{montanari2019generalization} prove generalization bounds for linear max-margin classifiers.

\begin{figure}
\centering
\begin{tabular}{rcc|ccc|ccc|cc}
\rotatebox{90}{$\quad$both layers}&\includegraphics[scale=0.18,trim=3.5cm 2cm 0.5cm 0,clip]{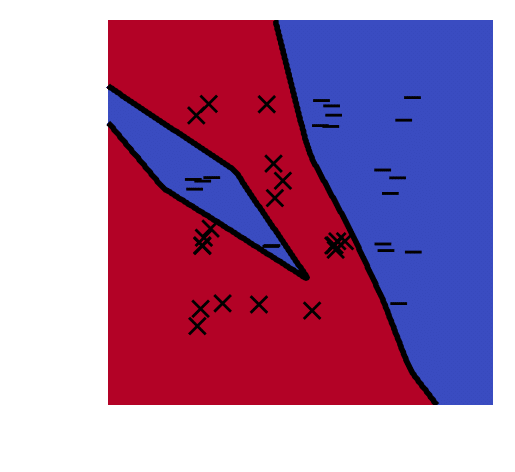} &&& \includegraphics[scale=0.18,trim=3.5cm 2cm 0.5cm 0,clip]{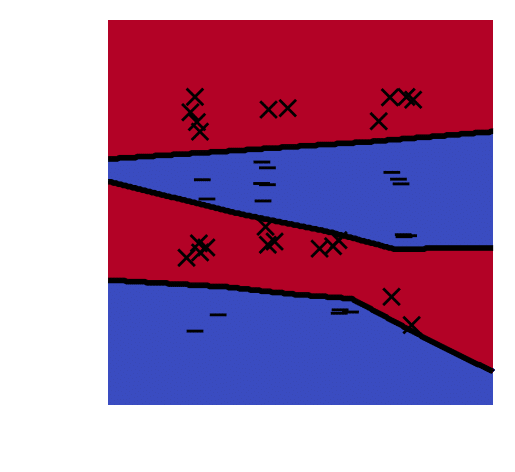}  &&& \includegraphics[scale=0.18,trim=3.5cm 2cm 0.5cm 0,clip]{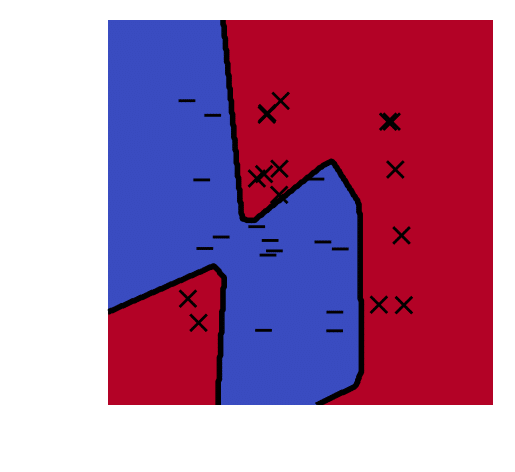}&&&  \includegraphics[scale=0.18,trim=3.5cm 2cm 0.5cm 0,clip]{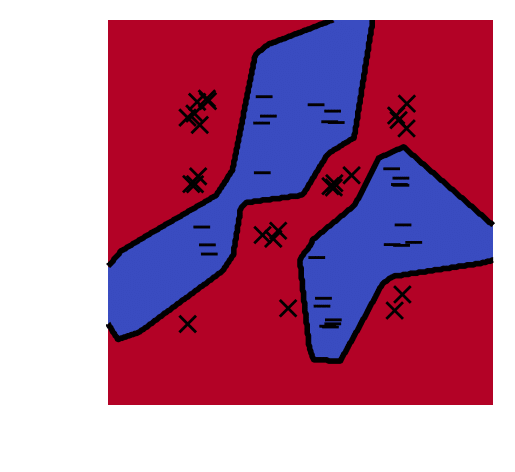} \\
\rotatebox{90}{$\;\;$output layer}&\includegraphics[scale=0.18,trim=3.5cm 2cm 0.5cm 0,clip]{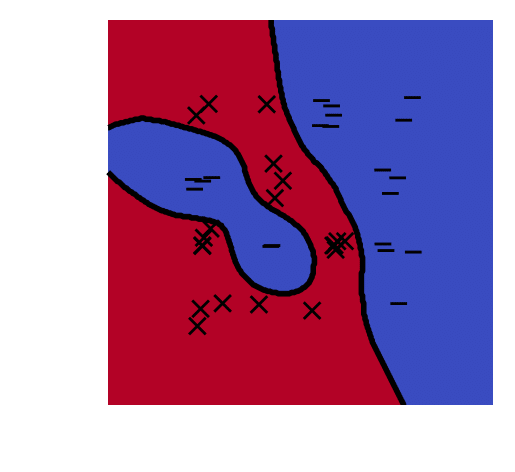} &&& \includegraphics[scale=0.18,trim=3.5cm 2cm 0.5cm 0,clip]{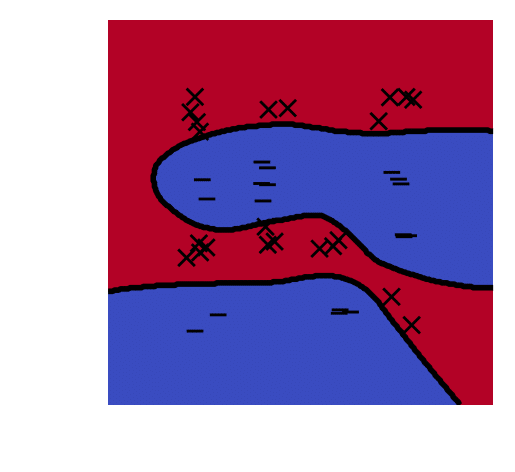}  &&& \includegraphics[scale=0.18,trim=3.5cm 2cm 0.5cm 0,clip]{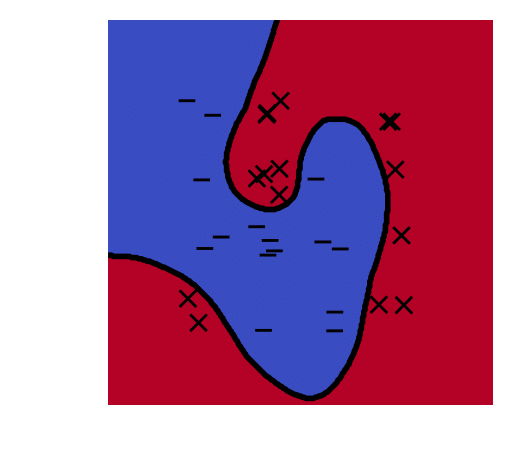}&&&  \includegraphics[scale=0.18,trim=3.5cm 2cm 0.5cm 0,clip]{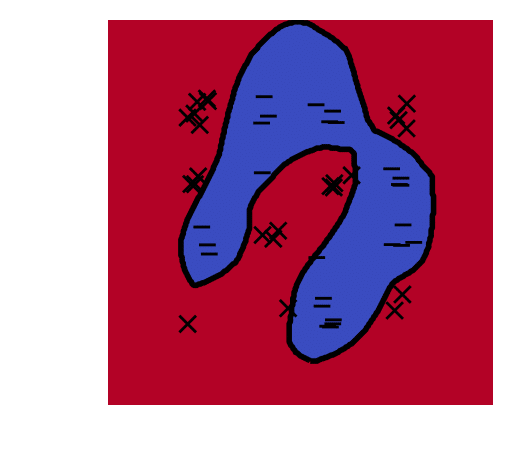}  \\
\end{tabular}
\vspace{-0.3cm}
\caption{Comparison of the implicit bias of training (top) both layers versus (bottom) the output layer  for ReLU networks with $d=2$ and for $4$ different random training sets.}\label{fig:illustration}
\end{figure}

\section{Numerical experiments}\label{sec:numerics}
In this section, we consider a large ReLU network with $m=1000$ hidden units, and compare the implicit bias and statistical performances of training both layers -- which leads to a max margin classifier in $\Ff_1$ -- versus the output layer -- which leads to max margin classifier in $\Ff_2$. The experiments are reproducible with the Julia code that can be found online\footnote{\url{https://github.com/lchizat/2020-implicit-bias-wide-2NN}}. 

\paragraph{Setting.}  Our data distribution is supported on $[-1/2, 1/2]^d$ and is generated as follows. In dimension $d=2$, the distribution of input variables is a mixture of $k^2$ uniform distributions on disks of radius $1/(3k-1)$ on a uniform $2$-dimensional grid with step $3/(3k-1)$, see Figure~\ref{fig:performance}(a) for an illustration with $k=3$. In dimension larger than $2$, all other coordinates follow a uniform distribution on $[-1/2,1/2]$. Each cluster is then randomly assigned a class in $\{-1,+1\}$. For such distributions, the parameters appearing in Theorem~\ref{th:generalization} satisfy $\Delta_2(\mathbb{P})\geq 1/(3k-1)$ and $R\leq \sqrt{d}$. 

\paragraph{Low dimensional illustrations.} Figure~\ref{fig:illustration} illustrates the differences in the implicit biases when $d=2$. It represents a sampled training set and the resulting decision boundary between the two classes for $4$ examples. The $\Ff_1$- max-margin classifier  is non-smooth and piecewise affine, which comes from the fact that the mass constraint in Eq.~\eqref{eq:maxmargin1} favors sparse solutions. In contrast, the max-margin classifier in $\Ff_2$ has a smooth decision boundary, which is typical of learning in a RKHS.

\begin{figure}\centering
\subfigure[Distribution][b]{\includegraphics[scale=0.43, trim = 0 -23 0 0, clip]{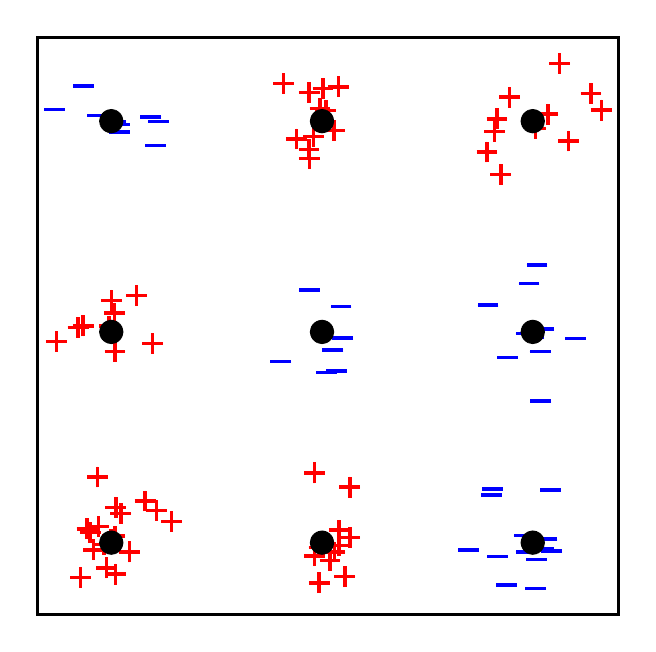}\label{fig:setting}}\hspace{0.01cm}
\subfigure[Test error vs.~$n$][b]{\includegraphics[scale=0.4]{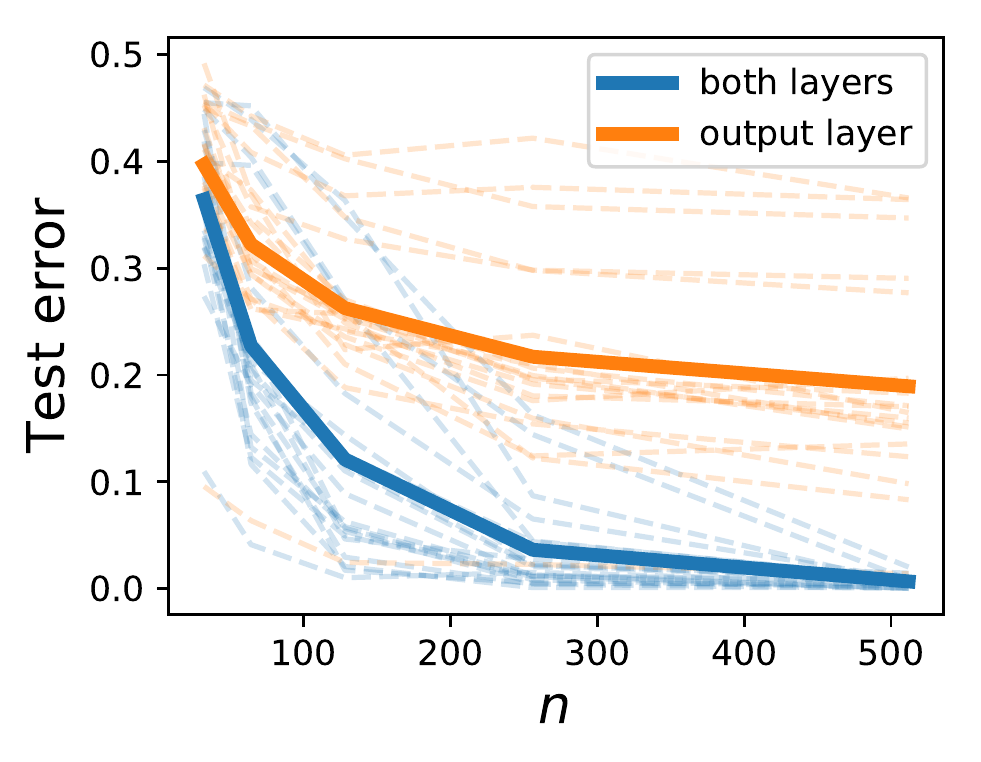}\label{fig:n}}\hspace{0.01cm}
\subfigure[Test error vs.~$d$][b]{\includegraphics[scale=0.4]{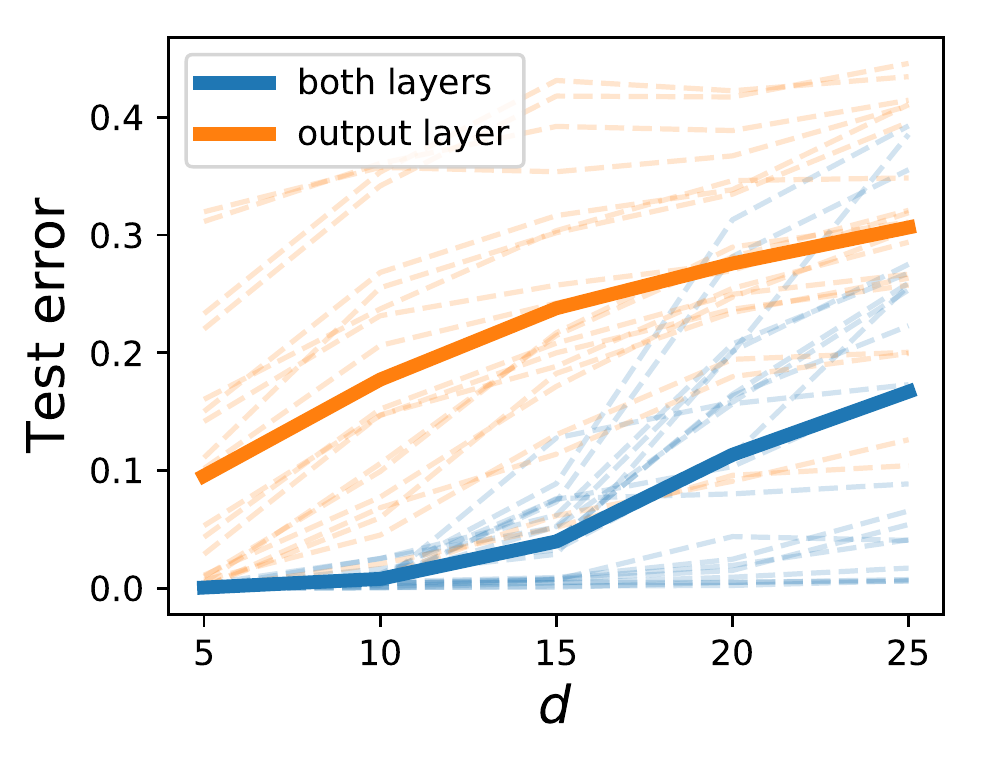}\label{fig:d}}\hspace{0.01cm}
\subfigure[Margin vs.~$m$][b]{\includegraphics[scale=0.4]{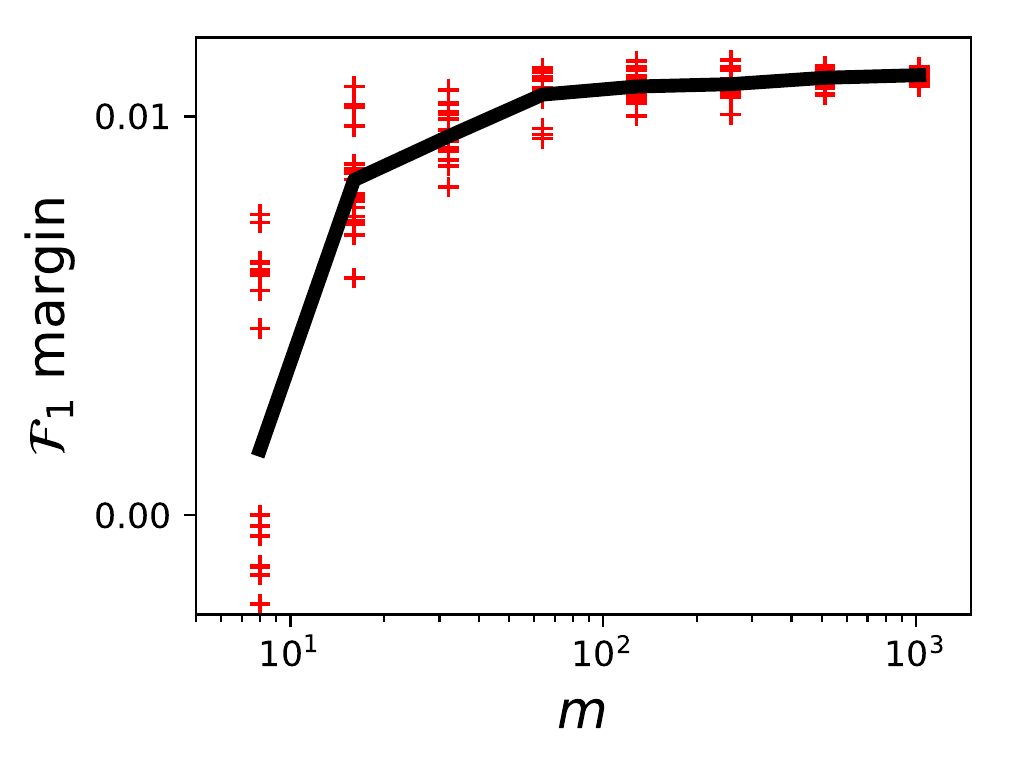}\label{fig:m}}
\caption{(a) Projection of the data distribution on the two first dimensions, (b) test error as a function of $n$ with $d=15$, (c) test error as a function of $d$ with $n=256$ (d) $\Ff_1$-margin at convergence as a function of $m$ when training both layers with $n=256$, $d=15$.}\label{fig:performance}
\end{figure}

\paragraph{Performance.} In higher dimensions, we observe the superiority of training both layers by plotting the test error versus $m$ or $d$ on Figure~\ref{fig:performance}(b) and~\ref{fig:performance}(c). We ran $20$ independent experiments with $k=3$ and show with a thick line the average of the test error $\mathbb{P}(yf(x)<0)$ after training. Note that $R$ grows as $\sqrt{d}$ so the dependency in $d$ observed in Figure~\ref{fig:performance}(c) is not in contradiction with Theorem~\ref{th:generalization}. Finally, Figure~\ref{fig:performance}(d) illustrates Corollary~\ref{cor:interchange} and shows the $\Ff_1$-margin after training both layers. For each $m$, we ran $30$ experiments using fresh random samples from the same data distribution.

\paragraph{Two implicit biases in one dynamics.} In Figure~\ref{fig:lazytogreedy}, we illustrate for $d=2$ a case where \emph{two} different kinds of implicit biases show up in a single dynamics ($t$ is the number of iterations with a constant step-size). We initialize the ReLU network with a large variance ($\mathcal{N}(0,40^2)$). The model is at first in the~\emph{lazy regime}~\citep{chizat2019lazy} and follows closely the dynamics of its linearization around initialization, which converges to the max-margin classifier for the \emph{tangent kernel}~\citep{jacot2018neural}. It then converges to the $\Ff_1$-max-margin classifier as suggested by Theorem~\ref{th:biasWGF}. In order to observe this intermediate implicit bias, one needs an initial step-size inversely proportional to the scale of the initialization~\citep{chizat2019lazy}.

\begin{figure}\centering
\subfigure[$t=10^2$ ][b]{\includegraphics[scale=0.28, trim = 25 10 0 0, clip]{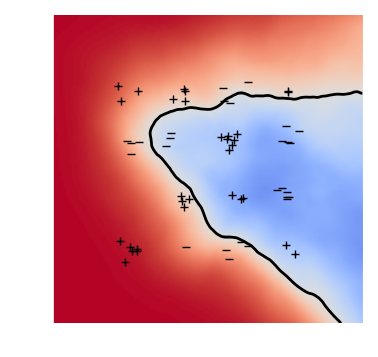}}\hspace{0.01cm}
\subfigure[$t=5*10^2$][b]{\includegraphics[scale=0.28,  trim = 25 10 0 0, clip]{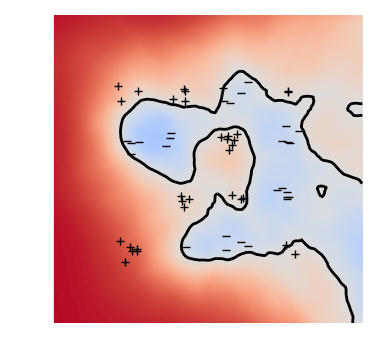}}\hspace{0.01cm}
\subfigure[$t=5*10^3$][b]{\includegraphics[scale=0.28,  trim = 25 10 0 0, clip]{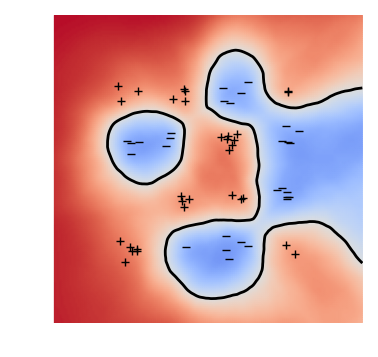}}\hspace{0.01cm}
\subfigure[$t=3*10^4$][b]{\includegraphics[scale=0.28,  trim = 25 10 0 0, clip]{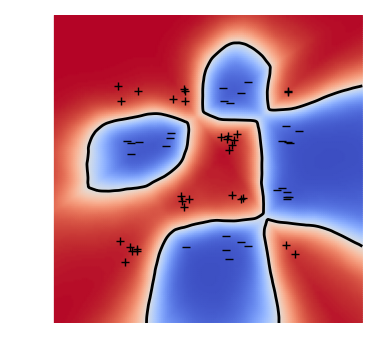}}\hspace{0.01cm}
\subfigure[$t=6*10^4$][b]{\includegraphics[scale=0.28,  trim = 25 10 0 0, clip]{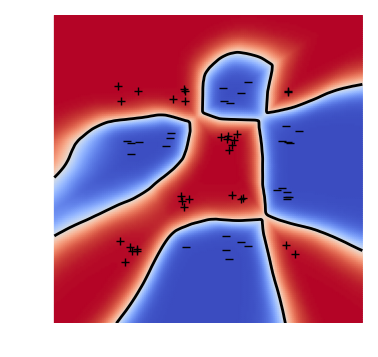}}\hspace{0.01cm}
\subfigure[$t=3.10^5$][b]{\includegraphics[scale=0.28,  trim = 25 10 0 0, clip]{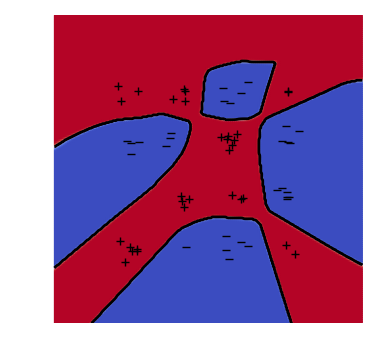}}\caption{Dynamics of the classifier while training both layers for an initialization with a large variance and a small initial step-size. The classifier first approaches the max-margin classifier for the \emph{tangent kernel}~\citep{jacot2018neural} (c) and eventually converges to the $\Ff_1$-max-margin (f).}\label{fig:lazytogreedy}
\end{figure}

\section{Conclusion}
We have shown that for wide two-layer ReLU neural networks, training both layers or only the output layer leads to very different implicit biases. When training both layers, the classifier converges to a max-margin classifier for a non-Hilbertian norm, which enjoys favorable statistical properties. Interestingly, this problem does not seem to be directly solvable with known convex methods in high dimension. Proving complexity guarantees for this non-convex gradient flow is an important open question for future work. In particular, even for   infinite width, continuous time dynamics as in Theorem~\ref{th:biasWGF}, it is still unknown whether a convergence rate can be given under reasonable conditions.

\newpage
\section*{Acknowledgements}
Part of this work was carried through while the first author was visiting the Chair of Statistical Field Theory at the \'Ecole Polytechnique F\'ed\'erale de Lausanne (EPFL), Switzerland. This work was funded in part by the French government under management of Agence Nationale de la Recherche as part of the ``Investissements d'avenir'' program, reference ANR-19-P3IA-0001 (PRAIRIE 3IA Institute). We also acknowledge 
 support the European Research Council (grant SEQUOIA 724063).

\bibliography{LC}

\begin{thebibliography}{52}
\providecommand{\natexlab}[1]{#1}
\providecommand{\url}[1]{\texttt{#1}}
\expandafter\ifx\csname urlstyle\endcsname\relax
  \providecommand{\doi}[1]{doi: #1}\else
  \providecommand{\doi}{doi: \begingroup \urlstyle{rm}\Url}\fi

\bibitem[Ambrosio et~al.(2008)Ambrosio, Gigli, and
  Savar{\'e}]{ambrosio2008gradient}
Luigi Ambrosio, Nicola Gigli, and Giuseppe Savar{\'e}.
\newblock \emph{Gradient Flows: in Metric Spaces and in the Space of
  Probability Measures}.
\newblock Springer Science \& Business Media, 2008.

\bibitem[Bach(2017{\natexlab{a}})]{bach2017breaking}
Francis Bach.
\newblock Breaking the curse of dimensionality with convex neural networks.
\newblock \emph{The Journal of Machine Learning Research}, 18\penalty0
  (1):\penalty0 629--681, 2017{\natexlab{a}}.

\bibitem[Bach(2017{\natexlab{b}})]{bach2017equivalence}
Francis Bach.
\newblock On the equivalence between kernel quadrature rules and random feature
  expansions.
\newblock \emph{The Journal of Machine Learning Research}, 18\penalty0
  (1):\penalty0 714--751, 2017{\natexlab{b}}.

\bibitem[Barron(1993)]{barron1993universal}
Andrew~R. Barron.
\newblock Universal approximation bounds for superpositions of a sigmoidal
  function.
\newblock \emph{IEEE Transactions on Information theory}, 39\penalty0
  (3):\penalty0 930--945, 1993.

\bibitem[Beck and Teboulle(2003)]{beck2003mirror}
Amir Beck and Marc Teboulle.
\newblock Mirror descent and nonlinear projected subgradient methods for convex
  optimization.
\newblock \emph{Operations Research Letters}, 31\penalty0 (3):\penalty0
  167--175, 2003.

\bibitem[Bengio et~al.(2006)Bengio, Le~Roux, Vincent, Delalleau, and
  Marcotte]{bengio2006convex}
Yoshua Bengio, Nicolas Le~Roux, Pascal Vincent, Olivier Delalleau, and Patrice
  Marcotte.
\newblock Convex neural networks.
\newblock In \emph{Advances in Neural Information Processing Systems}, pages
  123--130, 2006.

\bibitem[Bolte et~al.(2006)Bolte, Daniilidis, and Lewis]{bolte2006nonsmooth}
J{\'e}r{\^o}me Bolte, Aris Daniilidis, and Adrian Lewis.
\newblock A nonsmooth {M}orse--{S}ard theorem for subanalytic functions.
\newblock \emph{Journal of Mathematical Analysis and Applications},
  321\penalty0 (2):\penalty0 729--740, 2006.

\bibitem[Bubeck(2015)]{bubeck2015convex}
S{\'e}bastien Bubeck.
\newblock Convex optimization: Algorithms and complexity.
\newblock \emph{Foundations and Trends{\textregistered} in Machine Learning},
  8\penalty0 (3-4):\penalty0 231--357, 2015.

\bibitem[Chizat(2019)]{chizat2019sparse}
L\'ena\"ic Chizat.
\newblock Sparse optimization on measures with over-parameterized gradient
  descent.
\newblock \emph{arXiv preprint arXiv:1907.10300}, 2019.

\bibitem[Chizat and Bach(2018)]{chizat2018global}
L\'ena\"ic Chizat and Francis Bach.
\newblock On the global convergence of gradient descent for over-parameterized
  models using optimal transport.
\newblock In \emph{Advances in Neural Information Processing Systems}, 2018.

\bibitem[Chizat et~al.(2019)Chizat, Oyallon, and Bach]{chizat2019lazy}
L\'ena\"ic Chizat, Edouard Oyallon, and Francis Bach.
\newblock On lazy training in differentiable programming.
\newblock In \emph{Advances in Neural Information Processing Systems}, 2019.

\bibitem[Du et~al.(2018)Du, Hu, and Lee]{du2018algorithmic}
Simon~S. Du, Wei Hu, and Jason~D. Lee.
\newblock Algorithmic regularization in learning deep homogeneous models:
  Layers are automatically balanced.
\newblock In \emph{Advances in Neural Information Processing Systems}, pages
  384--395, 2018.

\bibitem[Du et~al.(2019)Du, Zhai, P\'oczos, and Singh]{du2018gradient}
Simon~S. Du, Xiyu Zhai, Barnab\'as P\'oczos, and Aarti Singh.
\newblock Gradient descent provably optimizes over-parameterized neural
  networks.
\newblock In \emph{International Conference on Learning Representations}, 2019.

\bibitem[Duchi et~al.(2012)Duchi, Bartlett, and
  Wainwright]{duchi2012randomized}
John~C. Duchi, Peter~L. Bartlett, and Martin~J. Wainwright.
\newblock Randomized smoothing for stochastic optimization.
\newblock \emph{SIAM Journal on Optimization}, 22\penalty0 (2):\penalty0
  674--701, 2012.

\bibitem[Gautschi(1997)]{gautschi1997numerical}
Walter Gautschi.
\newblock \emph{Numerical Analysis}.
\newblock Springer Science \& Business Media, 1997.

\bibitem[Gunasekar et~al.(2018{\natexlab{a}})Gunasekar, Lee, Soudry, and
  Srebro]{gunasekar2018characterizing}
Suriya Gunasekar, Jason Lee, Daniel Soudry, and Nathan Srebro.
\newblock Characterizing implicit bias in terms of optimization geometry.
\newblock In \emph{International Conference on Machine Learning},
  2018{\natexlab{a}}.

\bibitem[Gunasekar et~al.(2018{\natexlab{b}})Gunasekar, Lee, Soudry, and
  Srebro]{gunasekar2018implicit}
Suriya Gunasekar, Jason Lee, Daniel Soudry, and Nati Srebro.
\newblock Implicit bias of gradient descent on linear convolutional networks.
\newblock In \emph{Advances in Neural Information Processing Systems}, pages
  9461--9471, 2018{\natexlab{b}}.

\bibitem[Jacot et~al.(2018)Jacot, Gabriel, and Hongler]{jacot2018neural}
Arthur Jacot, Franck Gabriel, and Cl{\'e}ment Hongler.
\newblock Neural tangent kernel: Convergence and generalization in neural
  networks.
\newblock In \emph{Advances in Neural Information Processing Systems}, pages
  8571--8580, 2018.

\bibitem[Ji and Telgarsky(2018)]{ji2018risk}
Ziwei Ji and Matus Telgarsky.
\newblock Risk and parameter convergence of logistic regression.
\newblock \emph{arXiv preprint arXiv:1803.07300}, 2018.

\bibitem[Ji and Telgarsky(2019{\natexlab{a}})]{ji2018gradient}
Ziwei Ji and Matus Telgarsky.
\newblock Gradient descent aligns the layers of deep linear networks.
\newblock In \emph{International Conference on Learning Representations},
  2019{\natexlab{a}}.

\bibitem[Ji and Telgarsky(2019{\natexlab{b}})]{ji2019refined}
Ziwei Ji and Matus Telgarsky.
\newblock A refined primal-dual analysis of the implicit bias.
\newblock \emph{arXiv preprint arXiv:1906.04540}, 2019{\natexlab{b}}.

\bibitem[Jones(1992)]{jones1992simple}
Lee~K. Jones.
\newblock A simple lemma on greedy approximation in {H}ilbert space and
  convergence rates for projection pursuit regression and neural network
  training.
\newblock \emph{The Annals of Statistics}, 20\penalty0 (1):\penalty0 608--613,
  1992.

\bibitem[Koltchinskii and Panchenko(2002)]{koltchinskii2002empirical}
Vladimir Koltchinskii and Dmitry Panchenko.
\newblock Empirical margin distributions and bounding the generalization error
  of combined classifiers.
\newblock \emph{The Annals of Statistics}, 30\penalty0 (1):\penalty0 1--50,
  2002.

\bibitem[Kondratyev and Vorotnikov(2019)]{kondratyev2019spherical}
Stanislav Kondratyev and Dmitry Vorotnikov.
\newblock Spherical {H}ellinger--{K}antorovich gradient flows.
\newblock \emph{SIAM Journal on Mathematical Analysis}, 51\penalty0
  (3):\penalty0 2053--2084, 2019.

\bibitem[Kurkov{\'a} and Sanguineti(2001)]{kurkova2001bounds}
Vera Kurkov{\'a} and Marcello Sanguineti.
\newblock Bounds on rates of variable-basis and neural-network approximation.
\newblock \emph{IEEE Transactions on Information Theory}, 47\penalty0
  (6):\penalty0 2659--2665, 2001.

\bibitem[Kushner and Yin(2003)]{kushner2003stochastic}
Harold Kushner and G.~George Yin.
\newblock \emph{Stochastic Approximation and Recursive Algorithms and
  Applications}, volume~35.
\newblock Springer Science \& Business Media, 2003.

\bibitem[Li and Liang(2018)]{li2018learning}
Yuanzhi Li and Yingyu Liang.
\newblock Learning overparameterized neural networks via stochastic gradient
  descent on structured data.
\newblock In \emph{Advances in Neural Information Processing Systems}, pages
  8157--8166, 2018.

\bibitem[Lyu and Li(2019)]{lyu2019gradient}
Kaifeng Lyu and Jian Li.
\newblock Gradient descent maximizes the margin of homogeneous neural networks.
\newblock \emph{arXiv preprint arXiv:1906.05890}, 2019.

\bibitem[Maniglia(2007)]{maniglia2007probabilistic}
Stefania Maniglia.
\newblock Probabilistic representation and uniqueness results for
  measure-valued solutions of transport equations.
\newblock \emph{Journal de Math{\'e}matiques Pures et Appliqu{\'e}es},
  87\penalty0 (6):\penalty0 601--626, 2007.

\bibitem[Mei et~al.(2018)Mei, Montanari, and Nguyen]{mei2018mean}
Song Mei, Andrea Montanari, and Phan-Minh Nguyen.
\newblock A mean field view of the landscape of two-layer neural networks.
\newblock \emph{Proceedings of the National Academy of Sciences}, 115\penalty0
  (33):\penalty0 E7665--E7671, 2018.

\bibitem[Mei et~al.(2019)Mei, Misiakiewicz, and Montanari]{mei2019mean}
Song Mei, Theodor Misiakiewicz, and Andrea Montanari.
\newblock Mean-field theory of two-layers neural networks: dimension-free
  bounds and kernel limit.
\newblock \emph{arXiv preprint arXiv:1902.06015}, 2019.

\bibitem[M\'ezard and Montanari(2009)]{mezard2009information}
Marc M\'ezard and Andrea Montanari.
\newblock \emph{Information, physics, and computation}.
\newblock Oxford University Press, 2009.

\bibitem[Montanari et~al.(2019)Montanari, Ruan, Sohn, and
  Yan]{montanari2019generalization}
Andrea Montanari, Feng Ruan, Youngtak Sohn, and Jun Yan.
\newblock The generalization error of max-margin linear classifiers:
  High-dimensional asymptotics in the overparametrized regime.
\newblock \emph{arXiv preprint arXiv:1911.01544}, 2019.

\bibitem[Nacson et~al.(2019{\natexlab{a}})Nacson, Gunasekar, Lee, Srebro, and
  Soudry]{nacson2019lexicographic}
Mor~Shpigel Nacson, Suriya Gunasekar, Jason Lee, Nathan Srebro, and Daniel
  Soudry.
\newblock Lexicographic and depth-sensitive margins in homogeneous and
  non-homogeneous deep models.
\newblock \emph{arXiv preprint arXiv:1905.07325}, 2019{\natexlab{a}}.

\bibitem[Nacson et~al.(2019{\natexlab{b}})Nacson, Lee, Gunasekar, Savarese,
  Srebro, and Soudry]{nacson2018convergence}
Mor~Shpigel Nacson, Jason Lee, Suriya Gunasekar, Pedro Henrique~Pamplona
  Savarese, Nathan Srebro, and Daniel Soudry.
\newblock Convergence of gradient descent on separable data.
\newblock In \emph{The 22nd International Conference on Artificial Intelligence
  and Statistics}, pages 3420--3428, 2019{\natexlab{b}}.

\bibitem[Nesterov(2005)]{nesterov2005smooth}
Yurii Nesterov.
\newblock Smooth minimization of non-smooth functions.
\newblock \emph{Mathematical programming}, 103\penalty0 (1):\penalty0 127--152,
  2005.

\bibitem[Neyshabur et~al.(2014)Neyshabur, Tomioka, and
  Srebro]{neyshabur2014search}
Behnam Neyshabur, Ryota Tomioka, and Nathan Srebro.
\newblock In search of the real inductive bias: On the role of implicit
  regularization in deep learning.
\newblock \emph{arXiv preprint arXiv:1412.6614}, 2014.

\bibitem[Nitanda and Suzuki(2017)]{nitanda2017stochastic}
Atsushi Nitanda and Taiji Suzuki.
\newblock Stochastic particle gradient descent for infinite ensembles.
\newblock \emph{arXiv preprint arXiv:1712.05438}, 2017.

\bibitem[Ongie et~al.(2019)Ongie, Willett, Soudry, and
  Srebro]{ongie2019function}
Greg Ongie, Rebecca Willett, Daniel Soudry, and Nathan Srebro.
\newblock A function space view of bounded norm infinite width {ReLU} nets: The
  multivariate case.
\newblock \emph{arXiv preprint arXiv:1910.01635}, 2019.

\bibitem[Pillaud-Vivien et~al.(2018)Pillaud-Vivien, Rudi, and
  Bach]{pillaud2018exponential}
Loucas Pillaud-Vivien, Alessandro Rudi, and Francis Bach.
\newblock Exponential convergence of testing error for stochastic gradient
  methods.
\newblock \emph{Proceedings of Machine Learning Research vol}, 75:\penalty0
  1--47, 2018.

\bibitem[Rahimi and Recht(2008)]{rahimi2008random}
Ali Rahimi and Benjamin Recht.
\newblock Random features for large-scale kernel machines.
\newblock In \emph{Advances in Neural Information Processing Systems}, pages
  1177--1184, 2008.

\bibitem[Rotskoff et~al.(2019)Rotskoff, Jelassi, Bruna, and
  Vanden-Eijnden]{rotskoff2019neuron}
Grant Rotskoff, Samy Jelassi, Joan Bruna, and Eric Vanden-Eijnden.
\newblock Neuron birth-death dynamics accelerates gradient descent and
  converges asymptotically.
\newblock In \emph{International Conference on Machine Learning}, pages
  5508--5517, 2019.

\bibitem[Rotskoff and Vanden-Eijnden(2018)]{rotskoff2018neural}
Grant~M. Rotskoff and Eric Vanden-Eijnden.
\newblock Neural networks as interacting particle systems: Asymptotic convexity
  of the loss landscape and universal scaling of the approximation error.
\newblock In \emph{Advances in Neural Information Processing Systems}, 2018.

\bibitem[Savarese et~al.(2019)Savarese, Evron, Soudry, and
  Srebro]{savarese2019infinite}
Pedro Savarese, Itay Evron, Daniel Soudry, and Nathan Srebro.
\newblock How do infinite width bounded norm networks look in function space?
\newblock \emph{arXiv preprint arXiv:1902.05040}, 2019.

\bibitem[Scieur et~al.(2017)Scieur, Roulet, Bach, and
  d'Aspremont]{scieur2017integration}
Damien Scieur, Vincent Roulet, Francis Bach, and Alexandre d'Aspremont.
\newblock Integration methods and optimization algorithms.
\newblock In \emph{Advances in Neural Information Processing Systems}, pages
  1109--1118, 2017.

\bibitem[Sion(1958)]{sion1958general}
Maurice Sion.
\newblock On general minimax theorems.
\newblock \emph{Pacific Journal of mathematics}, 8\penalty0 (1):\penalty0
  171--176, 1958.

\bibitem[Sirignano and Spiliopoulos(2019)]{sirignano2018mean}
Justin Sirignano and Konstantinos Spiliopoulos.
\newblock Mean field analysis of neural networks: A central limit theorem.
\newblock \emph{Stochastic Processes and their Applications}, 2019.

\bibitem[Soudry et~al.(2018)Soudry, Hoffer, Nacson, Gunasekar, and
  Srebro]{soudry2018implicit}
Daniel Soudry, Elad Hoffer, Mor~Shpigel Nacson, Suriya Gunasekar, and Nathan
  Srebro.
\newblock The implicit bias of gradient descent on separable data.
\newblock \emph{The Journal of Machine Learning Research}, 19\penalty0
  (1):\penalty0 2822--2878, 2018.

\bibitem[Telgarsky(2013)]{telgarsky2013margins}
Matus Telgarsky.
\newblock Margins, shrinkage, and boosting.
\newblock In \emph{International Conference on Machine Learning}, pages
  307--315, 2013.

\bibitem[Wei et~al.(2019)Wei, Lee, Liu, and Ma]{wei2019regularization}
Colin Wei, Jason Lee, Qiang Liu, and Tengyu Ma.
\newblock Regularization matters: Generalization and optimization of neural
  nets v.s. their induced kernel.
\newblock In \emph{Advances in Neural Information Processing Systems}, 2019.

\bibitem[Wojtowytsch(2020)]{wojtowytsch2020convergence}
Stephan Wojtowytsch.
\newblock On the convergence of gradient descent training for two-layer
  relu-networks in the mean field regime.
\newblock \emph{arXiv preprint arXiv:2005.13530}, 2020.

\bibitem[Xu et~al.(2018)Xu, Zhou, Ji, and Liang]{xu2018will}
Tengyu Xu, Yi~Zhou, Kaiyi Ji, and Yingbin Liang.
\newblock When will gradient methods converge to max-margin classifier under
  {ReLU} models?
\newblock \emph{arXiv preprint arXiv:1806.04339}, 2018.

\end{thebibliography}
\newpage

\appendix

\section{Organization of the appendix}
\begin{itemize}
\item In Appendix~\ref{app:equivalencenorms}, we prove the equivalence between our definition of the variation norm in Section~\ref{sec:functionalanalysis} with the one that is used in the literature on convex neural networks.
\item In Appendix~\ref{app:meanfield}, we discuss properties of the Wasserstein gradient flow and justify Theorem~\ref{th:WGF}.
\item In Appendix~\ref{app:maintheorem}, we prove our main theorem Theorem~\ref{th:biasWGF} and its corollary.
\item In Appendix~\ref{app:ratesL1}, we prove Proposition~\ref{prop:OMA} on the convergence rate with fixed ``positions''.
\item In Appendix~\ref{app:output}, we prove Proposition~\ref{prop:PGD} on the convergence rate when training the output layer.
\item In Appendix~\ref{app:generalization}, we prove Theorem~\ref{th:generalization} on the margins and generalization performance.
\item In Appendiz~\ref{sec:relucase}, we prove Theorem~\ref{th:bias_relu} which covers the case of ReLU networks.
\end{itemize}

\section{Equivalence of two variation norms}\label{app:equivalencenorms}

Let us introduce, for ReLU networks, the variation norm introduced in Section~\ref{sec:functionalanalysis} and the different definition from the literature~\citep{bengio2006convex,bach2017breaking}. We will show that they are equal up to a factor $2$. This is a known result~\citep{neyshabur2014search}, and we provide here a natural proof using the measure theoretic formalism, for the sake of completeness. We stress that the analogous equivalence would fail for the RKHS norms, i.e., such a modification of the feature function could lead to different functional spaces.

Consider the feature functions $\phi(\theta,z)=c(a\cdot z)_+$ where $\theta = (a,c)\in \SS^{p-1}$ and $z=(x,1)$ (here we see $\SS^{p-1}$ as a subset of $\RR^{p-1}\times \RR$) and $\tilde \phi(a,z)=(a\cdot z)_+$ where $a\in \SS^{p-2}$. For a function $f:\SS^{p-2}\to \RR$, we consider two norms
\begin{align*}
\Vert f\Vert_{\Ff_1} &= \inf \left\{ \nu(\SS^{p-1}) \;;\; f(z) = \int \phi(\theta,z)\d\nu(\theta), \; \nu\in \Mm_+(\SS^{p-1})\right\}
\intertext{and}
\Vert f\Vert_{\tilde \Ff_1} &= \inf \left\{\vert \tilde \nu\vert(\SS^{p-2}) \;;\; f(z) = \int \tilde \phi(a,z)\d\tilde \nu(a), \; \nu\in \Mm(\SS^{p-2})\right\},
\end{align*} 
and the associated functional spaces $\Ff_1$ and $\tilde \Ff_1$ where these two norms are finite. Classical weak compactness arguments guarantee that the infimum defining these two norms is attained.

\begin{proposition}\label{prop:equivnorms}
It holds $\Ff_1=\tilde \Ff_1$ and for all $f\in \Ff_1$, it holds $\Vert f\Vert_{\Ff_1} =2 \Vert f\Vert_{\tilde \Ff_1}$. Moreover,  any measure that reaches the infimum for $\Vert \cdot\Vert_{\Ff_1}$ is concentrated on the set \[\left\{(a,c) \in \RR^{p-1}\times \RR\;;\; \Vert a\Vert =\vert c\vert = 1/\sqrt{2}\right\}.\]
\end{proposition}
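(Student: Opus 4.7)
The plan is to prove the two inequalities $\Vert f\Vert_{\Ff_1} \leq 2\Vert f\Vert_{\tilde\Ff_1}$ and $\Vert f\Vert_{\tilde\Ff_1} \leq \tfrac{1}{2}\Vert f\Vert_{\Ff_1}$ by explicitly converting between representing measures on $\SS^{p-1}$ and on $\SS^{p-2}$. The key observation is that writing $\theta=(a,c)\in\SS^{p-1}$ with $\Vert a\Vert^2+c^2=1$ and factoring $a=\Vert a\Vert\tilde a$ for $\tilde a\in\SS^{p-2}$, one gets $\phi(\theta,z)=c\Vert a\Vert\,(\tilde a\cdot z)_+$, so the nonnegative weight $c\Vert a\Vert$ is controlled by AM-GM: $|c|\Vert a\Vert\le \tfrac12(c^2+\Vert a\Vert^2)=\tfrac12$, with equality exactly on the set $\{|c|=\Vert a\Vert=1/\sqrt2\}$.

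For the first direction, given any $\nu\in\Mm_+(\SS^{p-1})$ representing $f$, I would define $\tilde\nu\in\Mm(\SS^{p-2})$ as the pushforward of the signed measure $c\Vert a\Vert\cdot\nu$ under the normalization map $(a,c)\mapsto a/\Vert a\Vert$ (the point $a=0$ has zero contribution and can be ignored by redefining the integrand to vanish there). A direct change of variables shows $\int(\tilde a\cdot z)_+\d\tilde\nu(\tilde a)=\int c(a\cdot z)_+\d\nu(a,c)=f(z)$, and by the AM-GM bound $|\tilde\nu|(\SS^{p-2})\le \int|c|\Vert a\Vert\d\nu\le \tfrac12\nu(\SS^{p-1})$. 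Taking the infimum over $\nu$ gives $\Vert f\Vert_{\tilde\Ff_1}\le \tfrac12\Vert f\Vert_{\Ff_1}$.

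For the reverse direction, given $\tilde\nu\in\Mm(\SS^{p-2})$ with Jordan decomposition $\tilde\nu=\tilde\nu^+-\tilde\nu^-$, I would define $\nu\in\Mm_+(\SS^{p-1})$ by pushing forward $2\tilde\nu^+$ under $\tilde a\mapsto(\tilde a/\sqrt2,\,+1/\sqrt2)$ and $2\tilde\nu^-$ under $\tilde a\mapsto(\tilde a/\sqrt2,\,-1/\sqrt2)$. Both image points lie in $\SS^{p-1}$ and satisfy $c\Vert a\Vert=\pm\tfrac12$. Using $1$-homogeneity of $(\cdot)_+$ in its argument, a short computation gives $\int c(a\cdot z)_+\d\nu=\int(\tilde a\cdot z)_+\d\tilde\nu=f(z)$, while $\nu(\SS^{p-1})=2|\tilde\nu|(\SS^{p-2})$. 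Infimizing yields $\Vert f\Vert_{\Ff_1}\le 2\Vert f\Vert_{\tilde\Ff_1}$.

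For the support statement, I would chase equality cases: if $\nu$ attains the infimum in $\Vert f\Vert_{\Ff_1}$, the induced $\tilde\nu$ from the first construction satisfies $\Vert f\Vert_{\tilde\Ff_1}\le|\tilde\nu|(\SS^{p-2})\le\int|c|\Vert a\Vert\d\nu\le\tfrac12\nu(\SS^{p-1})=\tfrac12\Vert f\Vert_{\Ff_1}=\Vert f\Vert_{\tilde\Ff_1}$, forcing equality throughout. In particular $\int|c|\Vert a\Vert\d\nu=\tfrac12\nu(\SS^{p-1})$, so the AM-GM bound is saturated $\nu$-almost everywhere, which combined with $c^2+\Vert a\Vert^2=1$ gives $|c|=\Vert a\Vert=1/\sqrt2$ $\nu$-a.e. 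No single step should be difficult; the main thing to be careful about is the handling of the set $\{a=0\}$ in the first pushforward (which carries no mass in an optimal $\nu$ by the support characterization) and the verification that the pushforwards are well-defined Borel measures on the sphere.
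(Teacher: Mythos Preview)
Your proposal is correct and follows essentially the same route as the paper: the same pushforward of the signed density $c\Vert a\Vert\cdot\nu$ under $(a,c)\mapsto a/\Vert a\Vert$ for one inequality, the same Jordan decomposition plus pushforwards to $(\tilde a/\sqrt2,\pm1/\sqrt2)$ for the other, and the same AM-GM equality-case analysis for the support claim. The only cosmetic difference is that the paper bounds the total variation by splitting into the contributions from $\{c>0\}$ and $\{c<0\}$ before summing, whereas you bound it directly by $\int|c|\Vert a\Vert\d\nu$; both are valid.
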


An interesting consequence of this result is that empirical risk minimization with the commonly used \emph{weight decay} regularization and the total variation regularization used by~\citet{bengio2006convex, bach2017breaking} (the path-norm) are equivalent.

\begin{proof}
We give a constructive proof where we explicitly build a minimizer for each norm given a minimizer for the other norm. Let us start with a measure $\nu \in \Mm_+(\SS^{p-1})$ such that $f(x) = \int \phi(\theta,x)\d\mu(\theta)$. We define the linear operator $\Pi:\Mm_+(\SS^{p-1})\to \Mm(\SS^{p-2})$ where $\Pi(\nu)$ is characterized by
\[
\int \varphi \d\Pi(\nu) = \int c\Vert a\Vert\varphi(a/\Vert a\Vert)\d\nu((a,c)),
\]
where, as usual, the integrand is extended by continuity at $a=0$. By construction, it holds $\forall z\in \RR^{p-2}$,
\[
\int \tilde \phi (a,z)\d\Pi(\nu)(a) = \int c\Vert a\Vert \tilde \phi (a/\Vert a\Vert,z)\d\nu((a,c)) = \int \phi((a,c),z)\d\nu((a,c)) = f(z).
\]
As for the total variation norm $\Vert \Pi(\nu)\Vert := \vert \Pi(\nu)\vert(\SS^{p-2})$ of $\Pi(\nu)$, it can be bounded as follows. In the definition of $\Pi(\nu)$, we may restrict the integral over $\{c>0\}$, which defines a measure $\Pi_+(\nu) \in \Mm_+(\SS^{p-2})$. Similarly restricting the integral over $\{c<0\}$ an taking the opposite gives another measure $\Pi_-(\nu)\in \Mm_+(\SS^{p-2})$. It holds $\Pi(\nu) = \Pi_+(\nu)-\Pi_-(\nu)$ and thus $\Vert  \Pi (\nu)\Vert \leq \Vert\Pi_+(\nu)\Vert + \Vert \Pi_-(\nu)\Vert$.  Moreover, by integrating against $\varphi = 1$, it holds
\[
\Vert\Pi_+(\nu)\Vert = \int 1 \d\Pi_+(\nu) = \int_{c>0} c\Vert a\Vert \d\nu((a,c)) \leq \frac12 \int_{c>0} \d\nu((a,c)),
\]
since $c\Vert a\Vert \leq (\Vert a \Vert^2 + c^2)/2 =1/2$ for $(a,c)\in \SS^{p-1}$. Using a similar bound for $\Vert \Pi_-(\nu)\Vert$, we get that
\[
\Vert  \Pi (\nu)\Vert \leq \frac12 \int_{c>0} \d\nu((a,c))+\frac12 \int_{c<0} \d\nu((a,c)) \leq \frac12 \nu(\SS^{p-1}).
\]
Finally, tracking the equality cases, it holds $\Vert  \Pi (\nu)\Vert= \frac12 \nu(\SS^{p-1})$ if and only if $\nu$ is concentrated on the set given in Proposition~\ref{prop:equivnorms}, which is the intersection of the sphere with the set of points satisfying $2\vert c\vert \Vert a\Vert =\Vert a\Vert^2+ \vert c\vert^2$.

Conversely, let $\nu \in \Mm(\SS^{p-2})$ and consider its Jordan decomposition $ \nu = \nu_+ - \nu_-$ into two nonnegative measures, which is such that $\Vert \nu \Vert = \Vert  \nu _+\Vert + \Vert  \nu_-\Vert$.  We define two maps $T^+, T^-:\SS^{p-2}\to \SS^{p-1}$ as $T^+(a) = (a,1)/\sqrt{2}$ and $T^-(a) = (a,-1)/\sqrt{2}$. Now, define the linear map $T:\Mm(\SS^{p-2})\to \Mm_+(\Ss^{p-1})$ as
\[
T(\nu) = 2( T^+_\# \nu_+ + T^-_\# \nu_- ).
\]
Since pushforwards preserve the mass of nonnegative measures, it holds $\Vert T(\nu)\Vert = 2 (\Vert \nu_+\Vert +\Vert \nu_-\Vert) = 2\Vert \nu\Vert$. Moreover, using the definition of pushforward measures, we have
\begin{align*}
\int \phi(\theta,z) \d T(\nu)(\theta) &= 2 \int \phi((a,1)/\sqrt{2},z)\d\nu_+(a) + 2 \int \phi((a,-1)/\sqrt{2},z)\d\nu_-(a) \\
&=\int \tilde \phi(a,z)\d\nu_+(a) -\int \tilde \phi(a,z)\d\nu_-(a) = \int \tilde \phi(a,z)\d\nu(a) =f(z).
\end{align*}
To sum up, for any feasible measure $\nu$ for the definition of $\Vert f \Vert_{\Ff_1}$, we have built a measure $\Pi(\nu)$ that is feasible for $\Vert f \Vert_{\tilde \Ff_1}$ with a norm divided by at most $2$. Conversely, for any feasible measure $\nu$ for the definition of $\Vert f \Vert_{\tilde \Ff_1}$, we have built a measure $T(\nu)$ that is feasible for $\Vert f \Vert_{\Ff_1}$ with a norm multiplied exactly by $2$. This concludes the proof.
\end{proof}

\section{Details on Wasserstein gradient flows}\label{app:meanfield}
\subsection{Alternative formulations of the Wasserstein gradient flow}
\begin{itemize}
\item (Divergence form) It can be shown that a Wasserstein gradient flow as defined in Definition~\ref{def:WGF} satisfies, in the sense of distributions, the following partial differential equation~\citep{ambrosio2008gradient}
$$
\partial_t \mu_t = -\div(\nabla F'_{\mu_t} \mu_t).
$$
\item (Projected representation) If we look at the projected trajectory $\nu_t = \Pi_2(\mu_t)$, it can be shown that it solves the following dynamic, which is known as Wasserstein-Fisher-Rao or Hellinger-Kantorovich gradient flow of the functional $J:\Mm_+(\SS^{p-1})$ satisfying $J(\Pi_2(\mu))=F(\mu)$. In equation,
\begin{equation}\label{eq:projectedWGF}
\partial_t \nu_t = - \div(\nabla J'_{\nu_t}\nu_t) + 4J'_{\nu_t}\nu_t,
\end{equation}
where $J'_{\nu} (\theta) = \sum_{i=1}^n \nabla_i S(\hat h(\nu)) y_i \phi(\theta,x_i)$ is defined on the sphere, see~\citet{chizat2019sparse}.
Note that there is also a Lagrangian representation for this projected dynamics~\citep{maniglia2007probabilistic} .
\item (Renormalized dynamics) It can also be seen with a direct computation that the normalized dynamics $\bar \nu_t = \nu_t /\Vert \nu_t\Vert$ satisfies the following equation 
\begin{equation}\label{eq:normalizedWGF}
\partial_t \bar \nu_t = - \div(\nabla J'_{\nu_t}\bar \nu_t) + 4\left( J'_{\nu_t} - \int J'_{\nu_t} \d\bar \nu_t\right) \bar \nu _t.
\end{equation}
When the driving potential is $J'_{\bar \nu_t}$ (instead of $J'_{\nu_t}$), this dynamics is known as the spherical Wasserstein-Fisher-Rao or spherical Hellinger Kantorovich gradient flow~\citep{kondratyev2019spherical} and was considered by~\citet{rotskoff2019neuron} for neural networks training.
\end{itemize}

\subsection{Proof of Theorem~\ref{th:WGF}}
We just need to prove that the assumptions of~\citet[Theorem 2.6]{chizat2018global} are satisfied and justify that non-compactly supported initialization are also allowed.

\paragraph{Checking regularity assumptions.} With the notations used by~\citet[Assumptions 2.1]{chizat2018global}, the Hilbert space $\Ff$ is $\RR^n$, the domain ``$\Omega$'' is $\RR^{p}$, the risk ``$R$'' is the smooth-margin $S$, the function ``$\Phi$'' is here $\Phi(\theta) = (y_i \phi(\theta,x_i))_{i\in [n]}$, there is no regularization, and the family of nested sets ``$\Omega_r$'' are the closed balls  of radius $r$ in $\RR^p$. We can directly check that:
\begin{itemize}
\item under Assumption {\sf (A2)}, $S:\RR^n\to \RR$ is differentiable and its gradient given in coordinates by $\nabla_i S(u) = \ell'(-u_i)/(\sum_{i'} \ell(-u_{i'}))$ is Lipschitz continuous and bounded on superlevel sets;
\item under Assumption {\sf (A3)}, the function $\Phi$ is differentiable with a locally Lipschitz continuous gradient. Moreover its gradient has at most a linear growth by $2$-homogeneity (this verifies assumptions from~\citet[Assumptions 2.1-(iii)-(c)]{chizat2018global}).
\end{itemize}

\paragraph{Removing the compact support assumption.} \citet[Theorem 2.6]{chizat2018global} only allows an initialization in some ``$\Omega_{r}$'', which means here that $\mu_0$ should be compactly supported. However, when $\Phi$ is positively $2$-homogeneous, this condition can be relaxed~\citep[Appendix C.1]{chizat2019sparse} because the dynamics is entirely characterized by its projection on the sphere (through $\Pi_2$). Indeed, for any $\mu \in \Pp_2(\RR^p)$ distinct from $\delta_{0}$, there exists a compactly supported $\tilde \mu \in \Pp_2(\RR^p)$ such that $\Pi_2(\mu)=\Pi_2(\tilde \mu)$. Since we have existence and uniqueness for the Wasserstein gradient flow starting from $\tilde \mu$, we get existence and uniqueness from the Wasserstein gradient flow $(\mu_t)_t$ starting fom $\mu$ (since it is entirely determined by the velocity field $\nabla F'_{\mu_t}$ which is itself determined by the projection of the dynamics $\Pi_2(\mu_t)$. Remark that with similar arguments, we can adapt the proof of~\citet[Theorem 2.6]{chizat2018global} to any initialization $\mu \in \Pp_2(\RR^p)$.

\section{Appendix to Section~\ref{sec:implicitbias}: main theorem}\label{app:maintheorem}

\subsection{Proof of the main theorem}
Let us restate Theorem~\ref{th:biasWGF} and give its proof. We recall that $\nu_t := \Pi_2(\mu_t)$ and $\bar \nu_t := \nu_t/(\nu_t(\SS^{p-1}))$.
\begin{theorem}
Under {\sf (A1-3)}, assume that $\Pi_2(\mu_0)$ has full support on $\SS^{p-1}$, that $\nabla S(h(\mu_t))$ converges in $\RR^n$ and that $\bar \nu_t$ converges weakly towards some $\bar \nu_{\infty}$. Then $\bar \nu_{\infty}$ is a maximizer of Eq.~\eqref{eq:maxmargin1}.
\end{theorem}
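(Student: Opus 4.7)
The plan is to prove that $\bar\nu_\infty$ together with the dual variable $p_\infty := \lim_t \nabla S(\hat h(\mu_t))$ form a primal--dual optimal pair for the minimax reformulation
\[
\gamma_1 = \max_{\substack{\nu\geq 0\\ \nu(\SS^{p-1})\leq 1}}\min_{i\in[n]} y_i\!\!\int \phi(\theta,x_i)\,d\nu(\theta) \;=\; \min_{p\in\Delta^{n-1}}\max_{\theta\in\SS^{p-1}} J_p(\theta),
\]
where $J_p(\theta):=\sum_i p_i y_i\phi(\theta,x_i)$ (this minimax holds by Sion's theorem: $\nu$ ranges over a weak-$\star$ compact convex set, $p$ over $\Delta^{n-1}$, the integrand is bilinear). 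The associated KKT conditions read: $\mathrm{supp}(\nu^\star)\subset\mathrm{argmax}_\theta J_{p^\star}(\theta)$ and $p^\star$ is supported on the examples achieving $\min_i y_i\!\int \phi(\theta,x_i)\,d\nu^\star$. My first move is to decompose the flow in polar coordinates $w=r\theta$ with $r\geq 0$ and $\theta\in\SS^{p-1}$. Since $F'_{\mu_t} = J_{p(t)}$ is $2$-homogeneous, Euler's identity and $1$-homogeneity of $\nabla J_{p(t)}$ give $\dot r = 2r J_{p(t)}(\theta)$ and $\dot\theta = \mathrm{grad}_{\SS} J_{p(t)}(\theta)$. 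Hence the $r^2$-weighted projected measure $\nu_t=\Pi_2(\mu_t)$ obeys a transport--reaction equation with angular velocity $\mathrm{grad}_{\SS} J_{p(t)}$ and logarithmic growth rate $4 J_{p(t)}$.

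Next, I would check that $p_\infty\in\Delta^{n-1}$. Linear independence in {\sf (A3)} yields $\gamma_1>0$, so the limiting driver satisfies $\max_\theta J_{p_\infty}(\theta)\geq \gamma_1 > 0$ (once this is established in step three below); combined with the full support of $\Pi_2(\mu_0)$ and the radial equation, this implies $\nu_t(\SS^{p-1})$ diverges exponentially and every training margin $u_i(t):=y_i h(\mu_t,x_i)$ tends to $+\infty$. The exponential-tail assumption $\ell(u)\sim \ell'(u)\sim e^u$ then gives $\sum_i p_i(t)\to 1$, so $p_\infty$ is a probability vector.

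The main geometric step, and the principal obstacle, is to show $\mathrm{supp}(\bar\nu_\infty)\subset\mathrm{argmax}_\theta J_{p_\infty}(\theta)$. The heuristic is clear: particles near the argmax enjoy a strictly larger exponential growth rate $4J_{p(t)}$, so in the normalized measure $\bar\nu_t$, mass outside any fixed neighborhood of $\mathrm{argmax}\,J_{p_\infty}$ decays exponentially relative to mass near the argmax. The full support of $\Pi_2(\mu_0)$ guarantees that we start with strictly positive mass in every open neighborhood of the argmax, and the angular flow only moves mass uphill along $\mathrm{grad}_{\SS} J_{p(t)}$. Turning this into a rigorous, uniform-in-$t$ estimate is delicate because the driver $J_{p(t)}$ is only converging to $J_{p_\infty}$; this is where the subanalyticity hypothesis in {\sf (A3)} is used, via Sard's lemma, to ensure that almost every sublevel set $\{J_{p_\infty}<\gamma_1-\epsilon\}$ is regular and admits a clean neighborhood decomposition, allowing one to pass from infinitesimal growth-rate comparisons to the weak-limit conclusion.

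To close the duality, I would then show that $p_\infty$ is supported on $\mathrm{argmin}_i m_i$, where $m_i := y_i\!\int\phi(\theta,x_i)\,d\bar\nu_\infty(\theta)$. Because $u_i(t) = \nu_t(\SS^{p-1})\cdot y_i\!\int\phi\, d\bar\nu_t$ and $\bar\nu_t\to\bar\nu_\infty$, we have $u_i(t)/\nu_t(\SS^{p-1})\to m_i$, while the exponential tail forces $p_i(t)\propto e^{-u_i(t)}$ asymptotically; hence $p_{i,\infty}=0$ whenever $m_i>\min_{i'}m_{i'}$. Combining the two support conclusions gives
\[
\max_\theta J_{p_\infty}(\theta)=\int J_{p_\infty}\,d\bar\nu_\infty=\sum_i p_{i,\infty}\, m_i=\min_i m_i=\min_i y_i\!\!\int\phi(\theta,x_i)\,d\bar\nu_\infty(\theta),
\]
and weak duality $\min_i m_i \leq \gamma_1\leq \max_\theta J_{p_\infty}(\theta)$ forces this common value to be $\gamma_1$, so $\bar\nu_\infty$ attains the maximum in Eq.~\eqref{eq:maxmargin1}.
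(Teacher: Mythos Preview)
Your overall strategy matches the paper's: establish primal--dual optimality of $(\bar\nu_\infty, p_\infty)$ via the two complementary slackness conditions of Proposition~\ref{prop:optimality}, using the polar decomposition and a growth-rate comparison on superlevel sets of $J_{p_\infty}$ to force $\bar\nu_\infty$ onto $\mathrm{argmax}\, J_{p_\infty}$. Your identification of the role of subanalyticity (Sard, regular level sets) is also correct.

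There is, however, a circularity in your bootstrap for $\max_\theta J_{p_\infty}(\theta)>0$. You derive it from $p_\infty\in\Delta^{n-1}$ together with $\gamma_1>0$ and weak duality, but to obtain $p_\infty\in\Delta^{n-1}$ you invoke $u_i(t)\to+\infty$, which in turn needs $\nu_t(\SS^{p-1})\to\infty$, which needs $J_{p(t)}$ to be eventually positive somewhere on the sphere---precisely what you are trying to establish. The parenthetical ``(once this is established in step three below)'' does not close the loop, since that step also presupposes $M>0$. For $\ell=\exp$ the issue is harmless because $\nabla S$ is always a probability vector, but {\sf (A2)} is meant to cover the logistic loss, for which $\sum_i\nabla_i S(u)<1$ in general and the limit could a priori be zero.

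The paper breaks the cycle without going through mass divergence: since $t\mapsto F(\mu_t)=S(\hat h(\mu_t))$ is non-decreasing along the gradient flow, $S(u(t))$ is bounded below, and Lemma~\ref{lem:limitgradient} shows directly that any limit of $\nabla S(u(t))$ is nonzero under this lower bound (using only $\ell'\geq c>0$ on $[0,\infty)$ and the exponential tail). Linear independence in {\sf (A3)} then gives $J'_\infty\not\equiv 0$, and the \emph{balanced} part of {\sf (A1)} upgrades this to $M=\max_\theta J'_\infty(\theta)>0$. With $M>0$ secured non-circularly, the rest of your outline---exponential mass growth via Gr\"onwall on a regular superlevel set $K_v$, concentration of $\bar\nu_\infty$ by comparing growth rates in $K_v$ versus $\SS^{p-1}\setminus K_{4v}$, and dual support via the soft-argmin asymptotics---coincides with the paper's Steps~1--3.
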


\begin{proof}
By Lemma~\ref{lem:limitgradient} the limit $p(\infty)\in \RR^n$ of $p(t) \coloneqq \nabla S(\hat h(\mu_t))$ is non-zero. This implies that $J'_{\nu_t}$, the restriction of $F'_{\nu_t}$ to the sphere, converges in $\Cc^1(\SS^{p-1})$ (i.e.~the function and its gradient converge uniformly) to a function $J'_\infty: \theta \mapsto \sum_{i=1}^n p_i(\infty)y_i\phi(\theta,x_i)$ which is non-zero because the family $(\phi(\cdot,x_1),\dots,\phi(\cdot,x_n))$ is linearly independent by {\sf (A3)}. Since $\phi$ is balanced by {\sf (A1)}, we have $M\coloneqq \max_{\theta \in \SS^{p-1}} J'_\infty(\theta) >0$. The rest of the proof is divided into $3$ steps. 

\textbf{Step 1: mass grows unbounded.} In a first step, we prove that $\nu_t(\SS^{p-1})\to \infty$. Assume that $J'_\infty$ is not constant (the other case will be considered later), and let $v \in {]0,M/8[}$ be such that $M-v$ is a regular value of $J'_\infty$, i.e., be such that $\Vert \nabla J'_{\infty}\Vert$ does not vanish on the $M-v$ level-set of $J'_\infty$. Such a $v$ is guaranteed to exist thanks to the fact that  $\phi(\cdot,x_i)$ is subanalytic (which implies that $J'_{\infty}$, which is a finite sum of such $\phi(\cdot,x_i)$, is also subanalytic) and that the sphere is a subanalytic set, and then applying~\citep[Thm.~14]{bolte2006nonsmooth}. Note that such admissible $v$ are dense in the range of $J'_\infty$, which will be useful in Step.~3. Let $K_v = (J'_{\infty})^{-1}([M-v,M])\subset \SS^{p-1}$ be the corresponding super-level set. By the regular value theorem, the boundary $\partial K_v$ of $K_v$ is a differentiable orientable compact submanifold of $\SS^{p-1}$ and is orthogonal to $\nabla J'_{\infty}$. 
By construction, it holds for all $\theta \in K_v$, $J'_{\infty}(\theta) \geq M-v$ and, for some $u>0$, by the regular value property, $\nabla J'_\infty(\theta)\cdot \vec n_{\theta} \geq u$ for all  $\theta \in \partial K_v$ where $\vec n_\theta$ is the unit normal vector to $\partial K_v$ at $\theta$ pointing inwards. Since $J'_{\nu_t}$ converges in $\Cc^1(\SS^{p-1})$ towards $J'_\infty$, there exists $t_0>0$ such that for all $t\geq t_0$, $\Vert J'_{\nu_t}-J'_\infty\Vert_{\Cc^1(\SS^{p-1})} \leq \min\{v,u/2\}$ and thus
\begin{align*}
\forall \theta \in K_v, \quad  J'_{\nu_t}(\theta) \geq m-2v &&\text{and}&& \forall \theta \in \partial K_v,\quad \nabla J'_{\nu_t}(\theta)\cdot \vec n_{\theta} \geq u/2.
\end{align*}
This second property guarantees that no mass leaves $K_v$ due to the divergence term in Eq.~\eqref{eq:projectedWGF} for $t\geq t_0$ (equivalently, due to the fact that $\RR_+ K_v$ is a positively invariant set of the flow $X$ for $t\geq t_0$). Thus, taking into account the reaction/growth term in Eq.~\eqref{eq:projectedWGF}, it holds for $t\geq t_0$,
\[
\frac{\d}{\d t} \nu_t(K_v) \geq 4 \int_{K_v} J'_{\nu_t}\d\nu_t  \geq 4(M-2v)  \nu_t(K_v).
\]
It follows by Gr\"onwall's lemma that $\nu_t(K_v)\geq \exp(4(M-2v)t)\nu_{t_0}(K_v)$ for $t\geq t_0$. On the other hand, $\nu_{t_0}$ has full support on $\SS^{p-1}$ since it can be written as the pushforward of a rescaled version of $\nu_0$ by a diffeomorphism, see~\citet[Eq. (1.3)]{maniglia2007probabilistic} (this is the only place where the assumption on the support is needed). Thus $\nu_{t_0}(K_v)>0$ and it follows that $\nu_t(\SS^{p-1})\to \infty$. To deal with the case where $J'_\infty$ is constant and is equal to $M>0$, we can directly take $K=\SS^{p-1}$ to show that $\nu_t(\SS^{p-1})\to \infty$. In the rest of the proof, we show that $(\bar \nu_\infty,p(\infty))$ satisfy the optimality conditions of Eq.~\eqref{eq:maxmargin1} given by Proposition~\ref{prop:optimality}, which we refer to as the \emph{complementary slackness} conditions.

\textbf{Step 2: complementary slackness (I).} We first show that $\min_i \hat h_i \to \infty$. Using the property of gradient flows and previously established estimates, we have for $T>0$,
\begin{align*}
F(\mu_T) - F(\mu_{0}) = \! \int_0^T \!\Vert \nabla F'_{\mu_t}\Vert^2 \d\mu_t \geq \! \int_{t_0}^T \! \int_{K_v} \! 4\vert J'_{\nu_t}\vert^2 \d\nu_t \d t \geq 4(M-2v)^2\int_{t_0}^T\! \nu_t(K_v)\d t \to \infty.
\end{align*}
Thus $F(\mu_t)\to \infty$ which implies that for all $i\in [n]$, $\ell(-\hat h_i(\mu_t)) \to 0$ and thus $\hat h_i(\mu_t)\to \infty$. Applying Lemma~\ref{lem:argmax}, it follows that $p_{i_0}(\infty) =0$ for all $i_0\in \arg\min_i \hat h_i(\nu_\infty)$.

\textbf{Step 3: complementary slackness (II).} We now show that  $\bar \nu_\infty$ is concentrated on $(J'_\infty)^{-1}(M)$, where $\bar \nu_t = \nu_t/\Vert \nu_t\Vert \in \Pp(\SS^{p-1})$ is the normalized path and $\bar \nu_\infty$ its limit. This is immediate if $J'_\infty$ is constant. Otherwise, assuming that $M-4v$ is also a regular value of $J'_\infty$ and taking a potentially smaller $u$ (which can always be achieved by perturbing $v$ if needed, since regular values are dense as mentioned in Step.~1), it holds for $t\geq t_0$,
\[
\frac{\d}{\d t} \nu_t(\SS^{p-1} \setminus K_{4v}) \leq 4 \int_{\SS^{p-1} \setminus K_{4v}} J'_{\nu_t}\d\nu_t  \leq 4(m-3v)\nu_t(K_{4v})
\]
using the fact that no mass enters into $\SS^{p-1}\setminus K_{4v}$ due to the divergence term in Eq.~\eqref{eq:projectedWGF} for $t\geq t_0$.
Comparing the rate of growth of the mass in $K_v$ and in $\SS^{p-1}\setminus K_{4v}$, we get that $\bar \nu_\infty(\SS^{p-1}\setminus K_{4v}) \leq \lim_{t\to\infty} \bar \nu_t(\SS^{p-1}\setminus K_{4v}) = 0$ since $\SS^{p-1}\setminus K_{4v}$ is open and by the properties of weak convergence of measures (Portmanteau Theorem). Since this holds for $v$ arbitrarily close to $0$, it follows that $\bar \nu_\infty$ is concentrated on $(J'_\infty)^{-1}(M)$.

\textbf{Step 4: conclusion.} We have proved the two complementary slackness properties, so by Proposition~\ref{prop:optimality}, the pair $(\bar \nu_\infty,p(\infty))$ satisfies the optimality conditions, which concludes the proof.
\end{proof}

\subsection{Proof of Corollary~\ref{cor:interchange}}
\begin{corollary}
Under the assumptions of Theorem~\ref{th:biasWGF}, assume that the sequence $(w_j(0))_{j \in \NN_*}$ is such that $\mu_{0,m}$ converges in $\Pp_2(\RR^p)$ to $\mu_0$. Then, denoting $\bar \nu_{m,t} = \Pi_2(\mu_{m,t})/[\Pi_2(\mu_{m,t})](\SS^{p-1})$, it holds
\[
\lim_{m,t\to \infty} \left(\min_{i\in [n]} y_i \int \phi(\theta,x_i)\d\bar \nu_{m,t}\right) = \gamma_1.
\]
\end{corollary}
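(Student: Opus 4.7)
Set $\gamma_{m,t}:=\min_{i\in [n]} y_i\int \phi(\theta,x_i)\d\bar\nu_{m,t}(\theta)$ and $\gamma_t:=\min_{i\in [n]} y_i\int \phi(\theta,x_i)\d\bar\nu_t(\theta)$, where $\bar\nu_t = \Pi_2(\mu_t)/([\Pi_2(\mu_t)](\SS^{p-1}))$ is the normalized projected limit flow. The easy direction is the uniform upper bound $\gamma_{m,t}\leq \gamma_1$: since $\bar\nu_{m,t}\in \Pp(\SS^{p-1})$ is feasible for the max-margin problem Eq.~\eqref{eq:maxmargin1}, this holds by definition for all $m,t$.

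For the matching lower bound the plan is to combine Theorems~\ref{th:WGF} and~\ref{th:biasWGF} in sequence. First, fix $t\geq 0$ and let $m\to \infty$. Using $2$-homogeneity of $\phi$ together with the defining property Eq.~\eqref{eq:homogeneousprojection} of $\Pi_2$, the normalized integrals satisfy the simple formula
\[
\gamma_{m,t} \;=\; \frac{\min_{i\in [n]} y_i\, h(\mu_{m,t},x_i)}{\int_{\RR^p} \Vert w\Vert^2 \d\mu_{m,t}(w)}.
\]
Both the numerator (through the continuous functions $w\mapsto y_i\phi(w,x_i)$, which have quadratic growth by {\sf (A1)}) and the denominator are integrals of continuous test functions with at most quadratic growth against $\mu_{m,t}$. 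Theorem~\ref{th:WGF} gives $\mu_{m,t}\to \mu_t$ in $\Pp_2(\RR^p)$, i.e.\ weak convergence together with convergence of second moments, which is precisely enough to conclude convergence of integrals of continuous functions with quadratic growth. The denominator is moreover bounded away from zero: $\Pi_2(\mu_0)$ has full support, so $[\Pi_2(\mu_0)](\SS^{p-1})>0$, and the proof of Theorem~\ref{th:biasWGF} in fact shows that this mass grows unboundedly along the flow. Hence $\gamma_{m,t}\to \gamma_t$ as $m\to\infty$, for each fixed $t$.

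Next, let $t\to \infty$. Theorem~\ref{th:biasWGF} gives that $\bar\nu_t$ converges weakly in $\Pp(\SS^{p-1})$ to a maximizer $\bar\nu_\infty$ of Eq.~\eqref{eq:maxmargin1}; since each $\phi(\cdot,x_i)$ is continuous on the compact sphere, weak convergence yields $\int \phi(\theta,x_i)\d\bar\nu_t(\theta)\to \int \phi(\theta,x_i)\d\bar\nu_\infty(\theta)$ for every $i$, and taking the minimum over the finitely many indices gives $\gamma_t\to \gamma_1$. Combining the two steps: given $\varepsilon>0$, choose $T$ with $|\gamma_t-\gamma_1|<\varepsilon/2$ for $t\geq T$, and for each such $t$ choose $M(t)$ with $|\gamma_{m,t}-\gamma_t|<\varepsilon/2$ for $m\geq M(t)$. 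This establishes $\lim_{t\to\infty}\lim_{m\to\infty}\gamma_{m,t}=\gamma_1$, and a diagonal extraction along $(M(t_k),t_k)$ with $t_k\to \infty$ upgrades the iterated statement to the joint convergence advertised in the corollary, without imposing a particular scaling of $m$ versus $t$.

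The main obstacle is guaranteeing that the $\Pp_2$-convergence of Theorem~\ref{th:WGF} actually transfers to convergence of the normalized sphere-supported object $\bar\nu_{m,t}$: neither $\Pi_2$ nor the subsequent normalization is continuous under plain weak convergence. They become continuous on $\Pp_2$ thanks to the second-moment control, which pairs exactly with the quadratic growth of $\phi$ forced by the $2$-homogeneity in {\sf (A1)}; without this compatibility the first step would break down. A secondary concern is the iterated-versus-joint limit distinction alluded to in the text: Theorem~\ref{th:WGF} only provides $\Pp_2$-convergence uniform on bounded time intervals, so no uniform-in-$t$ rate is available, but the diagonal argument above is sufficient to absorb this shortcoming.
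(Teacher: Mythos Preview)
Your argument correctly establishes the iterated limit $\lim_{t\to\infty}\lim_{m\to\infty}\gamma_{m,t}=\gamma_1$: the upper bound $\gamma_{m,t}\leq\gamma_1$ is immediate, the passage $\gamma_{m,t}\to\gamma_t$ at fixed $t$ via $\Pp_2$-convergence and quadratic growth is fine, and $\gamma_t\to\gamma_1$ follows from Theorem~\ref{th:biasWGF}. The paper does exactly the same for this half.

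The gap is the last paragraph. A diagonal extraction from the iterated limit only produces \emph{one particular} sequence $(m_k,t_k)\to(\infty,\infty)$ along which $\gamma_{m_k,t_k}\to\gamma_1$. It does \emph{not} yield the joint limit $\lim_{m,t\to\infty}\gamma_{m,t}=\gamma_1$, which requires that for every $\varepsilon>0$ there exist $M,T$ (independent of each other) such that $\gamma_{m,t}\geq\gamma_1-\varepsilon$ for all $m\geq M$ and $t\geq T$. Your $M(t)$ depends on $t$, and nothing prevents $M(t)\to\infty$ as $t\to\infty$; in particular you never control $\lim_{t\to\infty}\gamma_{m,t}$ for a \emph{fixed} finite $m$, where the gradient flow is genuinely non-convex and could in principle drift away from the max margin at large times. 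The remark after the corollary (``limits in $t$ and $m$ can be interchanged'') is precisely the content you are skipping.

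The missing ingredient is a stability estimate showing that once the (smooth) margin is close to $\gamma_1$, it cannot decrease along the flow. The paper obtains this by differentiating the rescaled smooth margin $t\mapsto S_{\Vert\nu_t\Vert}(\hat h(\bar\nu_t))$ and proving
\[
S_{\Vert\nu_t\Vert}(\hat h(\bar\nu_t))\;\geq\; S_{\Vert\nu_{t_0}\Vert}(\hat h(\bar\nu_{t_0}))-\frac{n}{\Vert\nu_{t_0}\Vert}\qquad\text{for all }t\geq t_0,
\]
a computation valid for \emph{any} Wasserstein gradient flow, hence in particular for each finite-$m$ trajectory. Combined with $\Vert\nu_{t_0,m}\Vert\to\infty$ (uniformly in $m\geq m_0$) and $\vert S_\beta(u)-\min_i u_i\vert\leq(\log n)/\beta$, this shows that if $\gamma_{m,t_0}\geq\gamma_1-\varepsilon/3$ for all $m\geq m_0$, then $\gamma_{m,t}\geq\gamma_1-\varepsilon$ for all $t\geq t_0$ and $m\geq m_0$ simultaneously. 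This approximate monotonicity is the substantive step your diagonal argument cannot replace.
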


\begin{proof}
The fact that 
$$
\lim_{t\to \infty}  \lim_{m\to \infty} \left(\min_{i\in[n]}y_i \int \phi(\theta,x_i)\d\bar \nu_{m,t}(\theta)\right) = \gamma_1
$$
is obtained by combining Theorem~\ref{th:WGF} with Theorem~\ref{th:biasWGF} because $\int \phi(\theta,\cdot)\bar \nu_{m,t}$ depends continuously on $\mu_{t,m}$ in $\Pp_2(\RR^p)$ (endowed with the Wasserstein distance $W_2$), so we only need to prove that limits can be interchanged. We detail the proof for $\ell=\exp$, noticing that we only use the asymptotic behavior of $\ell$ and $\ell'$ so it extends to any loss satisfying~{\sf (A2)}. Intuitively, in order to prove the other limit, we need to show that if for some $(t_0,m_0)$ the classifier is close to the max-margin classifier, then this remains true for $(t,m_0)$, $t\geq t_0$. First, for $\beta=\Vert \nu_t\Vert>0$, it holds at time $t$ (considering $\beta$ fixed):
\[
\frac{d}{dt} S_{\beta}(\hat h(\bar \nu_t)) = \int \Vert \nabla J'_{\nu_t}\Vert^2 \d\bar \nu_t + 4\left( \int\vert J'_{\nu_t}\vert ^2\d\bar \nu_t  - \left(\int\vert J'_{\nu_t}\vert \d\bar \nu_t\right)^2\right) \geq 0.
\]
On the other hand, direct computations using the fact that $\nabla S(u) \in \Delta^{n-1}$ leads to
$$
\partial_\beta S_\beta(u) = - \frac{1}{\beta^2} \sum_{i=1}^n \nabla_i S(u) \log(n\nabla_i S(u)) \geq - \frac{n}{\beta^2}.
$$
Combining both gives the total derivative
\begin{align*}
\frac{\d}{\d t} S_{\Vert \nu_t\Vert}(\hat h(\bar \nu_t)) \geq -n\frac{\d}{\d t}\left( \frac{1}{\Vert \nu_t\Vert}\right) && \Rightarrow && S_{\Vert \nu_t\Vert}(\hat h(\bar \nu_t)) \geq S_{\Vert \nu_{t_0}\Vert}(\hat h(\bar \nu_{t_0})) - \frac{n}{\Vert \nu_{t_0}\Vert}.
\end{align*}
From the first limit above, for any $\epsilon>0$, there exists $(t_0,m_0)$ such that for all $m\geq m_0$, $\min_{i\in [n]} y_i \int \phi(\theta,x_i)\d\bar \nu_{m,t_0}(\theta)  \geq \gamma_1 -\epsilon/3$. We have shown in the proof of Theorem~\ref{th:biasWGF} that $\Vert \nu_{t}\Vert \to \infty$ so choosing $t_0$ and $m_0$ potentially larger if needed, since $\Vert \nu_{t,m}\Vert \to \Vert \nu_t\Vert$ it also holds $\Vert \nu_{t_0,m}\Vert \geq 3n/\epsilon$ for  $m\geq m_0$. By the inequality above, we thus have for all $t\geq t_0$ and $m\geq m_0$
\[
S_{\Vert  \nu_{t,m}\Vert}(\hat h(\bar \nu_{t,m}))\geq S_{\Vert  \nu_{t_0,m}\Vert}(\hat h(\bar \nu_{t_0,m})) - \epsilon/3.
\]
Moreover, by Lemma~\ref{lem:softmax}, we have $\vert S_{\Vert \nu_t\Vert}(u) - \min_i u \vert \leq  \epsilon/3$ for $\Vert \nu_t\Vert$ large enough, uniformly for $u$ in a compact set. Hence  for all $m\geq m_0$,
\[
 \lim_{t\to \infty} S_{\Vert  \nu_{t,m}\Vert}(\hat h(\bar \nu_{t,m}))
\geq  S_{\Vert  \nu_{t_0,m}\Vert}(\hat h(\bar \nu_{t_0,m})) - \epsilon/3 \geq \gamma_1 -\epsilon.
\]
It remains to show that for all $C>0$, there exists $m_0$ such that if $m\geq m_0$ then $\lim \inf_{t\to\infty} \Vert \nu_{t,m}\Vert\geq C$, so that we deduce from the above
$$
\lim_{t\to \infty} \min_{i\in [n]} \hat h(\bar \nu_{t,m})[i] \geq \gamma_1-2\epsilon.
$$
Since $\epsilon$ is arbitrary, it would follow that $\lim_{m\to \infty}\lim_{t\to \infty} \min_{i\in [n]} \hat h(\bar \nu_{\infty,t}) \geq \gamma_1$ which is our claim.

To see this, it is sufficient to notice that since $F(\nu_{t,m})$ is increasing in $t$ for all $m$, and $S(u)-\log(n) \leq \min_i u$, we have that for all $C>0$, there exists $t_0,m_0$ such that $\min_i \hat h(\nu_{t,m})[i]\geq C$ for all $t\geq t_0$ and $m\geq m_0$.
\end{proof}

\subsection{Intermediate results}
This section contains intermediate results used in the proof of Theorem~\ref{th:biasWGF}.
\begin{proposition}[Optimality conditions]\label{prop:optimality}
The maximization problem~\eqref{eq:maxmargin1} admits global maximizers $\nu^\star \in \Mm_+(\SS^{p-1})$. Moreover, a measure $\nu^\star \in \Mm_+(\SS^{p-1})$ is a global maximizer of~\eqref{eq:maxmargin1} if and only if $\nu^\star \in \Pp(\SS^{p-1})$ and there exists $p^\star \in \Delta^{n-1})$ such that (i) $\spt \nu^\star \subset \arg\max_\theta \sum_i p_iy_i \phi(\theta,x_i)$ and (ii) $\spt p \subset \arg\min_i y_i \int \phi(\theta,x_i)\d\nu^\star(\theta)$.
\end{proposition}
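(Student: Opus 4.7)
The plan is to recast the problem as a bilinear minimax problem and apply Sion's theorem. Using the identity $\min_{i} u_i = \min_{p \in \Delta^{n-1}} \sum_{i} p_i u_i$, Eq.~\eqref{eq:maxmargin1} can be written as
\[
\gamma_1 = \max_{\nu \in K} \min_{p \in \Delta^{n-1}} L(\nu,p), \qquad L(\nu,p) \coloneqq \sum_{i=1}^n p_i y_i \int_{\SS^{p-1}} \phi(\theta,x_i)\d\nu(\theta),
\]
where $K = \{ \nu \in \Mm_+(\SS^{p-1})\;;\; \nu(\SS^{p-1})\leq 1\}$. I will first establish existence, then swap the min and max to obtain a saddle-point formulation, and finally read off the two support conditions from the optimality criteria of the two resulting linear programs.

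For existence, since $\SS^{p-1}$ is compact, $\Mm(\SS^{p-1})$ is the dual of $\Cc(\SS^{p-1})$, and by Banach--Alaoglu the set $K$ is weak-$*$ compact. For each $i$, the map $\nu \mapsto y_i \int \phi(\theta,x_i)\d\nu(\theta)$ is weak-$*$ continuous because $\phi(\cdot,x_i)\in \Cc(\SS^{p-1})$ under {\sf (A1)} and {\sf (A3)}. The objective in Eq.~\eqref{eq:maxmargin1} is the minimum of finitely many such continuous functionals, hence continuous, so the maximum is attained on $K$.

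Next I apply Sion's minimax theorem to $L$ on the product $K\times \Delta^{n-1}$: both factors are convex, $L$ is bilinear (thus affine in each variable separately), $\Delta^{n-1}$ is compact and $L(\cdot,p)$ is upper semicontinuous on $K$ in the weak-$*$ topology for each $p$. This yields
\[
\gamma_1 = \min_{p\in \Delta^{n-1}} \max_{\nu \in K} L(\nu,p) = \min_{p\in \Delta^{n-1}} \max_{\nu \in K} \int_{\SS^{p-1}} \Psi_p(\theta)\d\nu(\theta), \qquad \Psi_p(\theta)\coloneqq \sum_{i=1}^n p_i y_i \phi(\theta,x_i),
\]
and guarantees the existence of a saddle point $(\nu^\star,p^\star)$. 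Because $\phi$ is balanced by {\sf (A1)}, the function $\Psi_p$ always takes nonnegative values on $\SS^{p-1}$, so the inner maximum equals $\max_{\theta\in \SS^{p-1}} \Psi_p(\theta)\geq 0$ and is attained by any measure supported on $\arg\max_\theta \Psi_p$ with total mass $1$ (assuming this max is positive; when it is zero, the mass constraint is inactive and we can still pick any probability measure supported on the zero-set, but the claimed equality of the statement forces the nontrivial case).

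Finally, from the saddle-point property, $\nu^\star$ maximizes the linear functional $\nu\mapsto \int \Psi_{p^\star}\d\nu$ on $K$, which forces (a) $\nu^\star(\SS^{p-1})=1$ as soon as $\max_\theta \Psi_{p^\star}(\theta)>0$ (hence $\nu^\star \in \Pp(\SS^{p-1})$), and (b) $\spt \nu^\star \subset \arg\max_\theta \Psi_{p^\star}(\theta)$, which is condition (i). Symmetrically, $p^\star$ minimizes the linear functional $p\mapsto \sum_i p_i \bigl(y_i\int \phi(\theta,x_i)\d\nu^\star\bigr)$ on the simplex, so $\spt p^\star \subset \arg\min_i y_i\int \phi(\theta,x_i)\d\nu^\star(\theta)$, which is condition (ii). The converse implication follows because any pair $(\nu^\star,p^\star)$ satisfying (i)--(ii) is easily checked to be a saddle point of $L$, hence $\nu^\star$ is a global maximizer. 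The main subtlety, handled via the balanced assumption, is to rule out the degenerate case where the inequality constraint $\nu(\SS^{p-1})\leq 1$ is inactive, which would prevent writing the maximizer as a probability measure.
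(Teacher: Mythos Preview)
Your proof is correct and follows the same approach as the paper, which also reduces to Sion's minimax theorem and reads off the saddle-point conditions; the paper's version is simply terser (two lines citing Sion and stating that the necessary and sufficient optimality conditions coincide with (i)--(ii)). One minor slip: the sentence ``$\Psi_p$ always takes nonnegative values on $\SS^{p-1}$'' is not what you mean---balancedness gives $\Psi_p(T(\theta))=-\Psi_p(\theta)$, so $\Psi_p$ changes sign and what you need is $\max_\theta \Psi_p(\theta)\geq 0$, with strict inequality following from the linear independence in {\sf (A3)} (otherwise $\Psi_p\equiv 0$ would force $p=0\notin\Delta^{n-1}$), which then forces the mass constraint to be active.
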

\begin{proof}
By minimax duality~\citep{sion1958general}, we can rewrite Eq~\eqref{eq:maxmargin1} as the minimax problem
\[
\sup_{\nu \in \Pp(\Theta)}\inf_{p\in \Delta^{n-1}} \sum_{i=1}^n p_i y_i \int \phi(\theta,x_i)\d\nu(\theta) = \inf_{p\in \Delta^{n-1}} \sup_{\nu \in \Pp(\Theta)} \sum_{i=1}^n p_i y_i \int \phi(\theta,x_i)\d\nu(\theta) ,
\]
and it admits (at least) a saddle point $(\nu^\star,p^\star)$. Moreover, the optimality conditions are necessary and sufficient for the right-hand side to equal the left-hand side.
\end{proof}
Let us now prove some useful properties of the function
\begin{equation}
S_{\beta}(u)= - \frac1\beta \log\left( \frac1n \sum_{i=1}^n \ell(- \beta u_i)\right),
\end{equation}
which is a soft-min function under assumption {\sf (A2)}, as shown in the next lemma.
\begin{lemma}\label{lem:softmax}
If $\ell(u) \sim \exp(u)$ as $u\to \infty$, and if $\bar u \in (\RR_+^*)^n$, then
\[
\lim_{\beta \to \infty} S_\beta(\bar u) = \min_{i\in [n]} \bar u.
\]
\end{lemma}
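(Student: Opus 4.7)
The plan is to reduce to the case $\ell(u)=\exp(u)$ via the asymptotic $\ell(u)=\exp(u)(1+o(1))$ as $u\to-\infty$, and then perform the standard log-sum-exp limit. Note that since $\bar u_i>0$ we have $-\beta \bar u_i\to-\infty$ as $\beta\to\infty$, which is the regime where Assumption {\sf (A2)} provides the exponential tail.

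Concretely, set $u^\star = \min_i \bar u_i >0$ and write $\ell(-\beta \bar u_i) = \exp(-\beta \bar u_i)(1+\varepsilon_i(\beta))$ with $\varepsilon_i(\beta)\to 0$ as $\beta\to\infty$. Factoring out $\exp(-\beta u^\star)$ from the sum gives
\[
\frac1n \sum_{i=1}^n \ell(-\beta \bar u_i) = \exp(-\beta u^\star)\cdot \frac1n \sum_{i=1}^n \exp\bigl(-\beta(\bar u_i-u^\star)\bigr)(1+\varepsilon_i(\beta)),
\]
so that
\[
S_\beta(\bar u) = u^\star - \frac1\beta \log\!\left( \frac1n \sum_{i=1}^n \exp\bigl(-\beta(\bar u_i-u^\star)\bigr)(1+\varepsilon_i(\beta)) \right).
\]
It then suffices to show that the bracketed expression stays bounded away from $0$ and $\infty$ as $\beta\to\infty$, so that its logarithm divided by $\beta$ vanishes in the limit.

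For the upper bound, each exponential factor is at most $1$ (since $\bar u_i\geq u^\star$) and $1+\varepsilon_i(\beta)$ is bounded, so the sum is bounded above by a constant. For the lower bound, let $I = \arg\min_i \bar u_i$, which is nonempty; restricting the sum to $i\in I$ yields a lower bound of $\frac{|I|}{n}\min_{i\in I}(1+\varepsilon_i(\beta))$, which tends to $|I|/n>0$. Taking $\log$ and dividing by $\beta$ gives a quantity of order $O(1/\beta)$, which vanishes, and we conclude $S_\beta(\bar u)\to u^\star = \min_i \bar u_i$.

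There is no real obstacle here: the only subtle point is confirming that the $\sim$ in {\sf (A2)} (multiplicative asymptotic) is enough to carry through, which it is because the $(1+\varepsilon_i(\beta))$ factors are swallowed by the outer $\log/\beta$. If one wants a uniform version of the lemma (needed in the corollary's proof, for $\bar u$ ranging over a compact subset of $(\RR_+^\star)^n$), the same argument works since $\varepsilon_i(\beta)\to 0$ uniformly when $\bar u_i$ stays bounded below by a positive constant.
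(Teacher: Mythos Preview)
Your proof is correct and follows essentially the same approach as the paper: factor out $\exp(-\beta u^\star)$ from the sum, observe that the remaining factor converges to $|I|/n\in(0,1]$ (the paper computes this limit exactly, you bound it between positive constants), and conclude that the $\frac{1}{\beta}\log(\cdot)$ correction vanishes. Your remark about the uniform version over compacta is apt and matches how the lemma is later used.
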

\begin{proof}
Let $m \coloneqq \min_{i\in [n]} \bar u_i$. We have
$\exp\left( -\beta \left( S_{\beta}(\bar u) - m \right) \right) = \frac1n \sum_{i=1}^n \ell(-\beta \bar u_i )\exp(\beta m )$, where each term satisfies, in the large $\beta$ regime,
\[
\ell(-\beta \bar u_i )\exp(\beta m) \sim \exp(-\beta (\bar u_i -m)) \to \begin{cases}
1 & \text{if $i\in \arg\min_i \bar u_i$,}\\ 0 &\text{otherwise.}
\end{cases}
\]
As a consequence, $\exp\left( -\beta \left( S_{\beta}(\bar u) - m \right) \right) \to\frac1n  \# \arg\min_i \bar u_i \in {]0,1]}$. Thus, $S_{\beta}(\bar u) \to m$.
\end{proof}

The next lemma is immediate when $\ell=\exp$, but worth detailing for the general case.
\begin{lemma}\label{lem:limitgradient}
Under Assumption  {\sf (A2)}, let $u(t)$ be a sequence such that $S(u(t))$ is lower bounded and $\nabla S(u(t))$ converges. Then  $\lim_{t\to \infty} \nabla S(u(t)) \neq 0$.
\end{lemma}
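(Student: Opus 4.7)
I would reduce the claim to showing that the $\ell^1$-norm $\|\nabla S(u(t))\|_1 = \sum_i \nabla_i S(u(t))$ stays uniformly bounded below by a positive constant; since $\nabla S(u(t))$ is assumed convergent, this forces its limit to be nonzero. A direct computation from the definition of $S$ gives
\[
\sum_{i=1}^n \nabla_i S(u(t)) \;=\; \frac{\sum_{i=1}^n \ell'(-u_i(t))}{\sum_{i=1}^n \ell(-u_i(t))},
\]
so the task reduces to lower-bounding this ratio uniformly in $t$.

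First I would extract an a priori upper bound on the quantities $v_i(t):=-u_i(t)$. The hypothesis $S(u(t))\geq -\log B$ for some $B>0$ is equivalent to $\sum_i \ell(v_i(t))\leq nB$ for every $t$. Using that $\ell'(u)\geq c$ for $u\geq 0$, $\ell$ is unbounded above; combined with $\ell(v)\to 0^+$ as $v\to -\infty$ and the strict monotonicity of $\ell$, the range of $\ell$ is exactly $(0,\infty)$, so $V:=\ell^{-1}(nB)<\infty$ is well-defined and each $v_i(t)\leq V$ for all $i$ and $t$.

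Next I would lower-bound the logarithmic derivative $g(v):=\ell'(v)/\ell(v)$ on $(-\infty,V]$. Since $\ell>0$ everywhere, $g$ is well-defined and continuous on $\RR$. The exponential-tail assumption $\ell\sim\ell'\sim e^v$ at $-\infty$ gives $g(v)\to 1$, hence $g(v)\geq 1/2$ for every $v\leq v_0$ with $v_0$ sufficiently small. On the compact interval $[v_0,V]$, $g$ is continuous and positive, hence bounded below by some $\delta>0$; setting $\delta':=\min(1/2,\delta)$ yields $g\geq \delta'$ on $(-\infty,V]$. Writing $a_i(t):=\ell(-u_i(t))>0$, the target ratio rewrites as the convex combination $\sum_i g(v_i(t))\,a_i(t)\,/\,\sum_i a_i(t)$, which is at least $\min_i g(v_i(t))\geq \delta'$. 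This gives $\|\nabla S(u(t))\|_1\geq \delta'$ uniformly in $t$, and passing to the limit concludes.

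The single delicate point I anticipate is the positivity of $g$ on the intermediate range, say $[v_0,0]$, where neither the exponential-tail asymptotics nor the bound $\ell'\geq c$ on $[0,\infty)$ applies directly. Strict monotonicity of $\ell$ forbids $\ell'\equiv 0$ on any subinterval but in principle permits isolated zeros, which would invalidate the compactness argument at the preceding step. For the standard losses treated in the paper (logistic and exponential), however, $\ell'>0$ everywhere, so the argument goes through without further ado.
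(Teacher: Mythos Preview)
Your argument is correct under the extra hypothesis $\ell'>0$ everywhere, which you flag explicitly; for the logistic and exponential losses this is immediate.

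The route differs from the paper's. The paper proceeds by dichotomy: either some coordinate $u_{i_0}(t)$ stays upper bounded, in which case one rewrites
\[
\nabla_{i_0} S(u(t)) \;=\; \frac{1}{n}\,\ell'(-u_{i_0}(t))\,\exp\bigl(S(u(t))\bigr)
\]
and uses the lower bound on $S$ together with a positive lower bound on $\ell'$ on the relevant half-line; or else, up to a subsequence (which suffices since $\nabla S(u(t))$ is assumed to converge), every $u_i(t)\to\infty$ and the exponential tail gives $\sum_i\nabla_i S(u(t))\to 1$. You collapse both cases into a single estimate by writing $\|\nabla S(u(t))\|_1$ as a convex combination of the logarithmic derivatives $g(v_i(t))=\ell'(v_i(t))/\ell(v_i(t))$ and lower-bounding $g$ on the whole half-line $(-\infty,V]$. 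This is tidier and yields a bound uniform in $t$, not only asymptotic.

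On the delicate point: your concern is not an artefact of your method. Under {\sf (A2)} as written, strict monotonicity does not rule out isolated zeros of $\ell'$ on $(-\infty,0)$, and if $\ell'(v^*)=0$ for some $v^*<0$ then the lemma is in fact false: take $u_i(t)\equiv -v^*$ for all $i$ and $t$, so that $S(u(t))=-\log\ell(v^*)$ is constant while $\nabla S(u(t))=0$. The paper's case-1 argument has the same implicit reliance on $\ell'>0$ when it asserts that $\ell'(-u_{i_0}(t))$ is ``uniformly lower bounded by a positive constant''. So your caveat is precisely the missing hypothesis, and you have located it more carefully than the paper does.
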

\begin{proof}
We analyze separately two cases, whether there is $i_0\in [n]$ such that $u_{i_0}(t)$ is upper bounded or not. In the first case, we have
\[
\nabla_{i_0} S(u(t)) = \frac{\ell'(-u_{i_0}(t))}{\sum_{i}\ell(-u_i(t))} = \frac1n \ell'(-u_{i_0}(t)) \exp(S(u(t))),
\]
which is uniformly lower bounded by a positive constant under  {\sf (A2)}  and due to the lower bound on $S(u(t))$ hence $\lim_{t\to \infty} \nabla S_{i_0}(u(t))\neq 0$. In the other case, up to taking a subsequence (which does not change the limit), we can assume that $u_i(t)\to \infty$ for all $i\in [n]$. Then using the equivalent of $\ell$ and $\ell'$ at $-\infty$, we have that $\lim_{t\to \infty} \sum_{i\in [n]}  \nabla S(u(t))=1$ which is sufficient to conclude. 
\end{proof}

The next lemma is adapted from~\citet[Lemma 8]{gunasekar2018characterizing} and exploits the fact that the gradient of $S$ is a soft-argmax.
\begin{lemma}[Convergence of soft-argmin]\label{lem:argmax}
Let $\beta:\RR_+\to \RR_+$ and $\bar u:\RR_+\to \RR^n$ be such that $\beta(t) \to \infty$ and $\bar u(t)  \to \bar u(\infty)\in (\RR_+^*)^n$ as $t\to \infty$. If $\ell(-u)\sim \ell'(-u)\sim \exp(-u)$ as $u\to \infty$, then for any $i_0\notin \arg\min_i \bar u_i(\infty)$, as $t\to \infty$, it holds $\nabla_{i_0} S_{\beta(t)}(\bar u(t)) \to 0$. 
\end{lemma}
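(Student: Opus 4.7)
The plan is to exploit the explicit form of $\nabla_{i_0} S_\beta(u)$ as a normalized likelihood ratio, and then to use the exponential tail assumption to show that its denominator dominates when $i_0$ is not a minimizer. First I would compute directly from the definition
\[
\nabla_{i_0} S_{\beta}(u) = \frac{\ell'(-\beta u_{i_0})}{\sum_{i=1}^n \ell(-\beta u_i)}.
\]
This is a nonnegative quantity (by {\sf (A2)} the loss is strictly increasing, so $\ell'>0$), and crucially it is bounded above by $\ell'(-\beta u_{i_0})/\ell(-\beta u_{i_1})$ for any $i_1 \in [n]$. I will apply this bound with $i_1 \in \arg\min_i \bar u_i(\infty)$.

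Next I would set $m := \min_i \bar u_i(\infty) > 0$, and let $\delta := \bar u_{i_0}(\infty) - m > 0$, which is positive by the hypothesis $i_0 \notin \arg\min_i \bar u_i(\infty)$. By continuity of $\bar u(\cdot)$ at infinity, for $t$ large enough one has
\[
\bar u_{i_0}(t) - \bar u_{i_1}(t) \geq \delta/2 > 0 \qquad \text{and} \qquad \bar u_{i_1}(t) \geq m/2 > 0,
\]
so in particular $\beta(t) \bar u_{i_0}(t) \to \infty$ and $\beta(t) \bar u_{i_1}(t) \to \infty$. I can then apply the asymptotic equivalents $\ell(-u)\sim \ell'(-u)\sim \exp(-u)$ as $u\to\infty$ to obtain, for $t$ large,
\[
\frac{\ell'(-\beta(t) \bar u_{i_0}(t))}{\ell(-\beta(t) \bar u_{i_1}(t))} \leq 2\exp\bigl(-\beta(t)(\bar u_{i_0}(t) - \bar u_{i_1}(t))\bigr) \leq 2\exp(-\beta(t)\delta/2),
\]
where the factor $2$ absorbs the ratio of $\ell'$ and $\ell$ against their exponential equivalents.

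Finally, since $\beta(t)\to\infty$ and $\delta>0$ is fixed, the right-hand side tends to $0$, and by the sandwich argument $\nabla_{i_0} S_{\beta(t)}(\bar u(t)) \to 0$. The only subtlety to be careful about is that the asymptotic equivalence $\ell(-u)\sim \exp(-u)$ is only a statement about $u \to \infty$: one must first verify that $\beta(t)\bar u_i(t) \to \infty$ for the two indices used, which is ensured by $\bar u(\infty) \in (\RR_+^*)^n$ (this is precisely why the hypothesis imposes strict positivity of all coordinates of the limit, not just of $\bar u_{i_0}(\infty)$). There is no serious obstacle beyond this bookkeeping.
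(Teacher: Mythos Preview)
Your proof is correct and takes essentially the same approach as the paper's: both compute the gradient as a ratio, then exploit the exponential tail and the strict gap $\bar u_{i_0}(\infty) > \min_i \bar u_i(\infty)$ to show the ratio vanishes. The only cosmetic difference is that the paper multiplies numerator and denominator by $\exp(\beta\gamma)$ for an intermediate $\gamma \in {]\min_i \bar u_i(\infty),\bar u_{i_0}(\infty)[}$ and argues the numerator tends to $0$ while the denominator tends to $\infty$, whereas you lower-bound the denominator by the single term indexed by $i_1 \in \arg\min_i \bar u_i(\infty)$ and bound the resulting quotient directly; both are equivalent bookkeeping for the same exponential-gap argument.
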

\begin{proof}
Taking any $\gamma \in {]\min_{i} \bar u_i(\infty), \bar u_{i_0}(\infty)[}$, we have
\[
\nabla_{i_0} S_{\beta(t)}(\bar u(t)) = \frac{\ell'(-\beta \bar u_{i_0}(t))}{\sum_{i=1}^n \ell(-\beta \bar u_i(t))} =  \frac{\ell'(-\beta \bar u_{i_0}(t))\exp(\beta \gamma)}{\sum_{i=1}^n \ell(-\beta \bar u_i(t))\exp(\beta \gamma)}.
\]
By the assumption on $\ell'$, the numerator is equivalent to $\exp(-\beta (\bar u_{i_0}(t)-\gamma))$ and goes to $0$ as $t\to \infty$. Also, by the assumption on $\ell$, each of the term in the denominator is equivalent to $\exp(-\beta (\bar u_{i}(t)-\gamma))$ which goes to $\infty$ for $i \in \arg\min_i \bar u_i(\infty)$, hence the conclusion.
\end{proof}

\section{Appendix for Section~\ref{sec:rate}}\label{app:ratesL1}

Let us define 
$$
G_\beta(a) =  -\frac1\beta \log\left( \frac1n\sum_{i=1}^n \exp\left(-  \beta \sum_{j=1}^m z_{i,j} a_j\right)\right),
$$
which satisfies
\begin{equation}\label{eq:comparison}
\min_{i\in [n]} z_i^\top a \leq G_{\beta}(a) \leq \min_{i\in [n]} z_i^\top a + \frac{\log n}{\beta}.
\end{equation}

Let us recall Proposition~\ref{prop:OMA} and prove it.
\begin{proposition}
Let $a_j(t) = r_j(t)^2/m$ for $j\in [m]$, $\beta(t) = \Vert a(t)\Vert_1$ and $\bar a(t) = a(t)/\beta(t)$. For the step-sizes $\eta(t) = 1/(16\Vert z \Vert_\infty \sqrt{t+1})$ and a uniform initialization $r(0)\propto \mathbf{1}$, it holds
\begin{equation}\label{eq:intermediatebound}
\max_{0\leq s\leq t-1} \min_{i} z_i^\top \bar a(s)  \geq \gamma_1^{(m)} - \frac{\Vert z\Vert_\infty}{\sqrt{t}}(8\log(m) + \log(t)+1) - \frac{4\log n}{\sqrt{t}}  \sum_{s=0}^{t-1} \frac{1}{\beta(s)\sqrt{s+1}}.
\end{equation}
where $\gamma_1^{(m)} := \max_{a \in \Delta^{m-1}}\min_{i \in [n]} z_i^\top a$ and the last sum is uniformly bounded in $t$ as soon as $\gamma_1^{(m)}>0$.
\end{proposition}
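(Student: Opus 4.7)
\textbf{Proof plan for Proposition~\ref{prop:OMA}.} The plan is to exhibit $\bar a(t)$ as an approximate exponentiated gradient step on the moving objective $G_{\beta(t)}$, run a KL-divergence potential argument, and then verify that the mass $\beta(t)$ grows fast enough to make the residual sum $B$ finite.

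\textbf{Step 1 (multiplicative update).} First I would compute $\partial_{r_j} F_m(r) = \frac{2 r_j}{m}\sum_i p_i z_{i,j}$, where $p_i \propto \exp(-z_i^\top a)$ with $a_j = r_j^2/m$. Hence the iteration becomes $r_j(t+1) = r_j(t)\bigl(1 + 2\eta(t) g_j(t)\bigr)$, and squaring gives $a_j(t+1) = a_j(t)(1 + 2\eta(t) g_j(t))^2$ where $g_j(t) = \sum_i p_i(t) z_{i,j}$. A direct check shows that $g(t) = \nabla G_{\beta(t)}(\bar a(t))$, so the update is (twice) a multiplicative-weights step for the time-varying concave objective $G_{\beta(t)}$. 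The choice of step-size guarantees $|2\eta(t) g_j(t)| \le 1/8$, which is the regime where the Taylor expansion of $\log(1+x)$ is well-controlled.

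\textbf{Step 2 (KL potential inequality).} For any comparator $p \in \Delta^{m-1}$, I would track $D(p\Vert \bar a(t))$. Writing $x_j(t)=2\eta(t) g_j(t)$ we have
\begin{equation*}
D(p\Vert \bar a(t+1)) - D(p\Vert \bar a(t)) = -2\sum_j p_j \log(1+x_j(t)) + \log\!\Bigl(\sum_j \bar a_j(t)(1+x_j(t))^2\Bigr).
\end{equation*}
Using $\log(1+u) \le u$ on the second term and $\log(1+u) \ge u - u^2$ (valid since $|x_j|\le 1/8$) on the first, both sides reduce to a linear-plus-quadratic expression in $x(t)$. After cancellation one obtains a standard EG-style inequality
\begin{equation*}
D(p\Vert \bar a(t+1)) - D(p\Vert \bar a(t)) \le -4\eta(t)\langle p - \bar a(t),\, g(t)\rangle + C\eta(t)^2 \Vert z\Vert_\infty^2
\end{equation*}
for some absolute constant $C$.

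\textbf{Step 3 (telescoping and converting to margin).} Summing from $s=0$ to $t-1$, using the nonnegativity of KL, the concavity bound $G_{\beta(s)}(p)-G_{\beta(s)}(\bar a(s)) \le \langle g(s), p-\bar a(s)\rangle$, and specializing to $p=p^\star \in \arg\max_{p\in\Delta^{m-1}}\min_i z_i^\top p$ yields
\begin{equation*}
\sum_{s=0}^{t-1}\eta(s)\bigl[G_{\beta(s)}(p^\star) - G_{\beta(s)}(\bar a(s))\bigr] \le \tfrac{1}{4}\bigl[D(p^\star\Vert\bar a(0)) + C\Vert z\Vert_\infty^2\sum_{s=0}^{t-1}\eta(s)^2\bigr].
\end{equation*}
The comparison \eqref{eq:comparison} gives $G_{\beta(s)}(p^\star) \ge \gamma_1^{(m)}$ and $G_{\beta(s)}(\bar a(s)) \le \min_i z_i^\top \bar a(s) + \log(n)/\beta(s)$. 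Dividing by $\sum_s \eta(s)$ and using $\max \ge \text{weighted average}$, then plugging $\eta(s)=1/(16\Vert z\Vert_\infty\sqrt{s+1})$ with the integral estimates $\sum_{s<t}\eta(s)\gtrsim \sqrt{t}/\Vert z\Vert_\infty$, $\sum_{s<t}\eta(s)^2 \lesssim \log(t)/\Vert z\Vert_\infty^2$, and $D(p^\star\Vert \bar a(0))\le\log m$ (uniform initialization), recovers the claimed bound \eqref{eq:intermediatebound}.

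\textbf{Step 4 (finiteness of $B$), the main obstacle.} The delicate part is proving that $B = \sum_s 1/(\beta(s)\sqrt{s+1})$ is finite when $\gamma_1^{(m)}>0$, since the previous steps only control an averaged/best-iterate margin and do not by themselves force $\beta(t)$ to grow. The key observation is to look at the log of the $p^\star$-weighted geometric mean of the $r_j(t)$. From Step 1,
\begin{equation*}
\sum_j p^\star_j\log r_j(t) - \sum_j p^\star_j\log r_j(0) = \sum_{s=0}^{t-1}\sum_j p^\star_j \log(1+2\eta(s)g_j(s)),
\end{equation*}
and applying $\log(1+x)\ge x - x^2$ together with $\sum_j p^\star_j g_j(s) = \sum_i p_i(s) z_i^\top p^\star \ge \gamma_1^{(m)}$ gives
\begin{equation*}
\sum_j p^\star_j \log r_j(t) \ge 2\gamma_1^{(m)}\sum_{s<t}\eta(s) - O\!\Bigl(\sum_{s<t}\eta(s)^2\Bigr).
\end{equation*}
Since $r_j(t)\le \sqrt{m\beta(t)}$, the left side is at most $\tfrac12\log(m\beta(t))$, hence $\log\beta(t)\gtrsim \gamma_1^{(m)}\sqrt{t}/\Vert z\Vert_\infty - O(\log t)$. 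Thus $\beta(t)$ eventually grows at least as $\exp(c\sqrt{t})$, which makes $B$ convergent (by a crude comparison with $\sum e^{-c\sqrt{s}}$), completing the proof. This step is the one requiring the most care because all earlier estimates are invariant under rescaling in $\beta$; establishing a genuine growth of $\beta(t)$ requires exploiting the positivity of $\gamma_1^{(m)}$ through the specific comparator $p^\star$.
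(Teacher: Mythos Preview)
Your Steps 1--3 are essentially the paper's argument: the paper also derives the multiplicative update $a_j(t+1)=a_j(t)(1+2\eta g_j)^2$, recognizes it as (perturbed) entropic mirror ascent on the simplex for the moving objective $G_{\beta(t)}$, and telescopes a Bregman/KL inequality to obtain the same weighted-regret bound before invoking~\eqref{eq:comparison}. Your direct KL computation and the paper's ``mirror-step plus error term $e(t)$'' bookkeeping are two phrasings of the same estimate, yielding the same inequality up to the value of the absolute constant $C$ (your route gives $C=12$, which is enough for the stated constants).

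Step~4 is where you genuinely diverge from the paper, and your route is cleaner. The paper proves finiteness of $B$ in two stages: Lemma~\ref{lem:betaunbounded} introduces an auxiliary normalization $\tilde\beta(t)=\max\{1,\max_{s\le t}\beta(s)/\alpha\}$ and runs a \emph{second} mirror-ascent analysis on the scaled set $\alpha B^1_+$ to force $\beta(t)\to\infty$; then Lemma~\ref{lem:asympbeta1} uses the resulting convergence $\bar a(t)\to\bar a^\star$ to extract the growth rate $\log\beta(t)\gtrsim\gamma_1^{(m)}\sqrt t$. Your argument bypasses both lemmas: tracking the potential $\sum_j p^\star_j\log r_j(t)$ and using only $\langle p^\star,g(s)\rangle\ge\gamma_1^{(m)}$ gives the same super-polynomial lower bound on $\beta(t)$ in one step, without needing $\bar a(t)$ to converge first. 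The paper's detour does have the side benefit of making the intermediate fact $\beta(t)\to\infty$ explicit, but your approach is more self-contained and avoids the (apparent) circularity of invoking~\eqref{eq:intermediatebound} inside the proof that its last term is bounded.
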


\begin{proof}
Let us first prove that the normalized dynamics $\bar a(t)$ satisfies the (perturbed) \emph{online mirror ascent} recursion (on the simplex, with the entropy mirror map):
$$
\left\{
\begin{aligned}
 b_j(t+1) &= \bar a_j(t)\left( 1 + 4\eta(t) \nabla_j G_{\beta(t)}(\bar a(t)) +4\eta(t)^2 \vert \nabla_j G_{\beta(t)}(\bar a(t))\vert^2 \right), \; \forall j\in [m]\\
\bar a(t+1) &= b(t+1)/ \Vert b(t+1)\Vert_1.
\end{aligned}
\right.
$$
We mention that it is perturbed because for the plain \emph{online mirror ascent}, the multiplicative term in the first line would be $\exp(4\eta(t)\nabla G_{\beta(t)}(\bar a (t))$, so we have second order corrections in $\eta(t)$. 

Since $\nabla_j F(r(t)) = (2/m) r_j(t) \nabla G_1(a(t))$, it follows
\begin{align*}
a_j(t+1) &= (r_j(t) +2\eta(t) r_j(t) \nabla _j G_1(a(t)))^2\\
&= r_j(t)^2 + 4\eta(t) r_j(t)^2 \nabla _j G_1(a(t)) + 4\eta(t)^2 r_j(t)^2 \vert \nabla _j G_1(a(t))\vert^2\\
& = a_j(t) \left(1 +  4\eta(t) \nabla _j G_1(a(t)) + 4\eta(t)^2 \vert \nabla _j G_1(a(t)\vert^2\right).
\end{align*}
Now using the fact that $\nabla G_1(a) = \nabla G_{\Vert a\Vert_1}(a/\Vert a\Vert_1)$, it follows
\begin{align*}
\bar a_j(t+1) = \frac{\Vert a(t)\Vert_1}{\Vert a(t+1)\Vert_1} \bar a_j(t) \left(1 +  4\eta(t) \nabla_j G_{\beta(t)}(\bar a(t)) + 4\eta(t)^2 \vert \nabla_j G_{\beta(t)}(\bar a(t)\vert^2\right),
\end{align*}
hence the iterations. Let us rewrite these iterations using the framework of Bregman divergences (see~\citet[Chap. 4]{bubeck2015convex} for details). For $a,b \in \RR_+^m$, let $\phi(a) =\sum_{i=1}^{m} a_j\log(a_j)-a_j$, let $D(a,b) = \phi(a)-\phi(b) -\nabla \phi(b)^\top(a-b)$ and let $\Pi(a)=a/\Vert a\Vert_1 = \arg\min_{b\in \Delta^{m-1}} D(b,a)$ be the Bregman projection on the simplex for the divergence $D$. With $g(s) = \nabla G_{\beta(s)}(\bar a(s))$, we have 
$$
\left\{
\begin{aligned}
 \nabla \phi(b(t+1)) &= \nabla\phi(\bar a(t)) + \eta(t) g(t) +\eta_t^2 e(t) \\
\bar a(t+1) &= \Pi(b(t+1)),
\end{aligned}
\right.
$$
which are the online mirror ascent updates, with a second-order error term $e(t)_j = \log(1+4\eta(s)g_j(t)+4\eta(t)^2g_j(t)^2)-4\eta(t)g_j(t)$.  
Notice that $\forall t$, $\Vert g(t)\Vert_\infty \leq \Vert z\Vert_\infty$ so if we assume that $\eta(t)\leq 1/(16\Vert z\Vert_\infty)$ we have $\eta(t)\Vert g(t)\Vert_\infty\leq 1/16$. Using the inequality $\vert \log(1+u)- u\vert \leq u^2$ for $\vert u\vert \leq 1/2$, we get by applying it with $u_j = 4\eta(t)g_j(t)+4\eta(t)^2g_j(t)^2$,
$$
\Vert e(s)\Vert_\infty \leq (4\eta(t)\Vert z\Vert_\infty+4\eta(t)^2\Vert z\Vert_\infty^2)^2 + 4\eta(t)^2\Vert z\Vert_\infty^2 \leq 23 \eta(t)^2\Vert z\Vert_\infty^2.
$$

We now follow the usual proof of mirror ascent from~\citet[Thm. 4.2]{bubeck2015convex} (or~\citet{beck2003mirror} for the variable step-size case) and including this error term leads to, for all $\bar a^*\in \Delta^{m-1}$,
$$
4\eta(t)g(t)^\top (\bar a^*-\bar a(t))  \leq D(\bar a^*,\bar a(t)) - D(\bar a^*,\bar a(t+1)) + 24\eta(t)^2\Vert z\Vert_\infty^2.
$$
We get a telescopic sum and using the concavity of each $G_{\beta}$,
$$
S(t) := \sum_{s=0}^{t-1} \eta(s)(G_{\beta(s)}(\bar a^*) - G_{\beta(s)}(\bar a(s))) \leq  \frac14 D(\bar a^*,\bar a(0)) + 6 \Vert z\Vert_\infty^2 \sum_{s=0}^{t-1} \eta(s)^2.
$$
With our choice of initialization, $D(\bar a^*,\bar a(0)) \leq \log(m)$. Let us choose $\eta(t) = \tau/\sqrt{t+1}$. Using the inequalities
$$
\sum_{s=0}^{t-1}\frac{1}{\sqrt{s+1}} \geq \int_1^{t+1}\frac{ds}{\sqrt{s}} = 2\sqrt{t+1}-2.
$$
and 
$$
\sum_{s=0}^{t-1} \left(\frac{1}{\sqrt{s+1}}\right)^2 = \sum_{s=1}^t \frac{1}{s} \leq 1 +\int_{s=1}^{t}\frac{ds}{s} =1+\log(t).
$$
It follows that for all $t\geq 1$,
$$
S(t) :=\frac{ \sum_{s=0}^{t-1} \eta(s)(G_{\beta(s)}(\bar a^*) - G_{\beta(s)}(\bar a(s)))}{\sum_{s=0}^{t-1} \eta(s)}  \leq \frac{\log(n)/4 + 6\tau^2 \Vert z \Vert_\infty^2 (1+\log(t))}{2\tau(\sqrt{t+1}-1)}.
$$
In particular, with the choice $\tau = 1/(16\Vert z\Vert_\infty)$, we get 
$$
S(t) \leq \frac{\Vert z\Vert_\infty}{\sqrt{t+1}-1}\Big( 2\log(m) + (1+\log(t))/4 \Big) \leq \frac{\Vert z\Vert_\infty}{\sqrt{t}}(8\log(m) + \log(t)+1).
$$
where we used $\sqrt{t}/4\leq \sqrt{t+1}-1$ to simplify the expression.
Finally, using inequality~\eqref{eq:comparison}, we have
$$
G_{\beta(s)}(\bar a^*) - G_{\beta(s)}(\bar a(s)) \geq \min_{i} z_i^\top \bar a^* - \min_{i} z_i^\top \bar a(s) - \frac{\log n }{\beta(s)}.
$$
Taking the weighted sum gives
\begin{align*}
\gamma_1^{(m)}- \max_{0\leq s\leq t-1} \min_{i} z_i^\top \bar a(s)  &\leq S(t) + \frac{ \sum_{s=0}^{t-1} \eta(s) (\log n) /\beta(s)}{\sum_{s=0}^{t-1} \eta(s)} \\
& \leq \frac{\Vert z\Vert_\infty}{\sqrt{t}}(8\log(m) + \log(t)+1) + \frac{4\log n}{\sqrt{t}}  \sum_{s=0}^{t-1} \frac{1}{\beta(s)\sqrt{s+1}}.
\end{align*}
The conclusion follows by Lemma~\ref{lem:betaunbounded}.
\end{proof}

In the next result, we show that the norm of the iterates grows to $+\infty$ and that  $\sum_{s=0}^{t-1} \frac{1}{\beta(s)\sqrt{s+1}}$ is finite. For simplicity, we do not track the constants.
\begin{lemma}\label{lem:betaunbounded} Under the assumptions of Proposition~\ref{prop:OMA}, we have that $\beta(t)\to \infty$ and $\sum_{s=0}^{t-1} \frac{1}{\beta(s)\sqrt{s+1}}$ is bounded uniformly in $t$.
\end{lemma}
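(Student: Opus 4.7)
The plan is to combine the one-step recursion for $\beta$ with the mirror-ascent inequality already established in the proof of Proposition~\ref{prop:OMA}, thereby obtaining a single inequality that ties $\log\beta(t)$ and $B(t) := \sum_{s<t}\tfrac{1}{\beta(s)\sqrt{s+1}}$ together, and then to close the argument via a bootstrap that rules out unbounded growth of $B$. Throughout, set $\tau := 1/(16\Vert z\Vert_\infty)$ so that $\eta(s)=\tau/\sqrt{s+1}$, and let $\gamma(s):=\min_i z_i^\top\bar a(s)$.

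First, I would derive a multiplicative lower bound on $\beta(t+1)/\beta(t)$. Summing the identity $a_j(t+1)=a_j(t)(1+2\eta(t)\nabla_jG_1(a(t)))^2$ over $j$ and discarding the nonnegative second-order term gives
\[
\beta(t+1) \;\geq\; \beta(t) + 4\eta(t)\sum_j a_j(t)\nabla_j G_1(a(t)) \;=\; \beta(t)\bigl(1 + 4\eta(t)\,{\textstyle\sum_i} p_i(t)\,z_i^\top\bar a(t)\bigr),
\]
where $p(t)\in\Delta^{n-1}$ is the probability vector with coordinates $p_i(t)=e^{-z_i^\top a(t)}/\sum_k e^{-z_k^\top a(t)}$. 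Since $p(t)$ is a probability vector, $\sum_i p_i z_i^\top\bar a(t)\geq \gamma(t)$; and since $|4\eta(t)\gamma(t)|\leq 4\eta(t)\Vert z\Vert_\infty\leq 1/4$, the inequality $\log(1+x)\geq x - x^2$ for $|x|\leq 1/2$ telescopes to
\[
\log\beta(t) \;\geq\; \log\beta(0) + 4\sum_{s<t}\eta(s)\gamma(s) - O(\log t),
\]
the error term arising from $16\Vert z\Vert_\infty^2\sum\eta(s)^2 = O(\log t)$.

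Second, I would revisit the mirror-ascent bound from the proof of Proposition~\ref{prop:OMA}, this time evaluated at $\bar a^*\in \arg\max_{\Delta^{m-1}}\min_i z_i^\top a$, so that $G_{\beta(s)}(\bar a^*)\geq \gamma_1^{(m)}$. Combining with the sandwich $G_{\beta(s)}(\bar a(s))\leq \gamma(s) + \log n/\beta(s)$ from Eq.~\eqref{eq:comparison} and noting $\sum_s \eta(s)/\beta(s)=\tau B(t)$ yields
\[
\sum_{s<t}\eta(s)\gamma(s) \;\geq\; \gamma_1^{(m)}\sum_{s<t}\eta(s) \;-\; \tau\log n\cdot B(t) \;-\; O(\log(mt)).
\]
Together with the previous display and $\sum_{s<t}\eta(s)\geq 2\tau(\sqrt{t+1}-1)$, this produces the key inequality
\[
\log\beta(t) + CB(t) \;\geq\; A\sqrt{t+1} - O(\log(mt)), \qquad A:=8\gamma_1^{(m)}\tau,\; C:=4\tau\log n.
\]

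The final step is the bootstrap. Rearranging gives $1/(\beta(t)\sqrt{t+1})\leq g(t)\,e^{CB(t)}$ for a summable $g(t) = e^{-A\sqrt{t+1}+O(\log t)}/\sqrt{t+1}$. Using monotonicity of $B$, for any $s_0\geq 0$,
\[
B(t) - B(s_0) \;\leq\; e^{CB(t)}\sum_{s_0\leq s<t}g(s).
\]
Choosing $s_0$ large enough that $\epsilon := C\sum_{s\geq s_0}g(s)$ satisfies $\epsilon\,e^{1+CB(s_0)} < 1$ (possible since $B(s_0)$ is finite --- $\beta>0$ on bounded intervals because $|2\eta(s)\nabla_jG_1|\leq 1/8<1$ --- and $\epsilon\to 0$ as $s_0\to\infty$), the inequality $CB(t) - CB(s_0) \leq \epsilon\,e^{CB(t)}$ forces $CB(t)$ to stay bounded by the smaller root of $y = CB(s_0)+\epsilon e^y$. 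This yields a uniform bound $B(t)\leq B^*<\infty$; substituting back into the key inequality gives $\log\beta(t)\geq A\sqrt{t+1} - O(\log(mt))\to\infty$, establishing both conclusions.

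The main obstacle is the bootstrap: the implicit dependence on $B(t)$ through the factor $e^{CB(t)}$ must be controlled by the time-shift device, and one must carefully verify that the finite-time bound $B(s_0)$, which in principle could be large if $\beta$ dips close to zero early on, remains compatible with the fixed-point condition on $B(t)$.
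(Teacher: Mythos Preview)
Your first two steps are correct and yield the clean key inequality $\log\beta(t)+CB(t)\geq A\sqrt{t+1}-O(\log(mt))$. The problem is the bootstrap in the final step. You want $s_0$ with $\epsilon(s_0)\,e^{1+CB(s_0)}<1$, and you justify this by saying ``$B(s_0)$ is finite and $\epsilon\to 0$''. But both quantities vary with $s_0$, so this is not sufficient: you need $\log\epsilon(s_0)+CB(s_0)\to -\infty$. Now $\epsilon(s_0)$ decays like $e^{-A\sqrt{s_0}}$ (up to polynomial factors), whereas the only a~priori information on $CB(s_0)$ comes from your own key inequality, which gives a \emph{lower} bound $CB(s_0)\geq A\sqrt{s_0+1}-O(\log s_0)-\log\beta(s_0)$. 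If $\log\beta(s_0)$ stays small early on (and nothing so far prevents $\log\beta(s)$ from being as negative as $-\Theta(\sqrt s)$, since $\gamma(s)\geq -\|z\|_\infty$ is the only pointwise bound you have on the summands in Step~1), then $CB(s_0)$ can itself grow like $A\sqrt{s_0}$, and the product $\epsilon(s_0)e^{CB(s_0)}$ need not be small. The circularity you flag in your last paragraph is thus a real obstruction, not just a presentational nuisance: the argument as written does not close without an independent lower bound on $\beta$ (or upper bound on $B$) over an initial time window whose length depends on the problem constants.

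The paper breaks this circularity by a different device. Instead of bootstrapping, it introduces a free scale parameter $\alpha>0$ and runs a second online mirror-ascent analysis for the rescaled iterates $\tilde a(t)=a(t)/\max\{1,\max_{s\leq t}\beta(s)/\alpha\}$ on the set $\{a\in\RR^m_+:\Vert a\Vert_1\leq\alpha\}$. The point is that in this analysis the relevant upper bound on $G_{\tilde\beta(s)}(\tilde a(s))$ is $\gamma_1^{(m)}\Vert\tilde a(s)\Vert_1+\log n/\tilde\beta(s)$, so instead of $B(t)$ one controls $\sum_{s<t}\eta(s)\Vert\tilde a(s)\Vert_1\leq\sum_{s<t}\eta(s)\beta(s)$ directly. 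Optimizing over $\alpha$ (taking $\alpha=\sqrt t$) yields $\sum_{s<t}\beta(s)/\sqrt{s+1}\gtrsim t$, from which $\beta(t)\to\infty$ follows. Only \emph{then}, once $\beta(t)\to\infty$ is established independently, is an argument essentially equivalent to your Step~1 used (in Lemma~\ref{lem:asympbeta1}) to get $\log\beta(t)\gtrsim\gamma_1^{(m)}\sqrt t$, which immediately gives summability of $1/(\beta(s)\sqrt{s+1})$. So the paper separates the two conclusions, using the $\alpha$-trick to obtain the first without reference to $B$, whereas your attempt to obtain both simultaneously runs into the feedback between $\beta$ and $B$.
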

\begin{proof}
For this result, we look at a different online mirror ascent dynamics. We consider  $\alpha>0$ (to be chosen appropriately later) and define $\tilde \beta(t) = \max\{ 1, \max_{0\leq s\leq t}\{\beta(s)/\alpha\}\}$ and the iterates $\tilde a(t) = a(t)/\tilde \beta(t)$. With the same arguments than in the proof of  Proposition~\ref{prop:PGD}, it can be seen that those iterates satisfy the recursion, with $g(t)=\nabla G_{\tilde \beta(t)}(\tilde a(t))$,
$$
\left\{
\begin{aligned}
b_j(t+1) &= \tilde a_j(t) (1+4\eta(t) g_j(t) + 4\eta(t)^2 g_j(t)^2).\\
\tilde a(t+1) &= b(t+1)/\Vert b(t+1)\Vert .
\end{aligned}
\right.
$$
These are (perturbed) online mirror ascent iterates for the sequence of losses $G_{\tilde \beta(t)}$, step-sizes $\eta_t$ on the set $\alpha B^1_+ =\{ \tilde a \in \RR^{m}_+\;;\; \sum_{j} \tilde a_j \leq \alpha \}$. Note that the entropy $\phi(s)=s\log s -s +1$ is $1/\alpha$ strongly convex with respect to $\Vert \cdot\Vert_1$ on this set, and that $\Vert g\Vert_{\infty}$ is bounded uniformly  in $\alpha$. We have the usual mirror descent bound~\citep[Chap. 4][]{bubeck2015convex} with $\bar a^*$ the $\ell_1$-max-margin solution,
$$
\eta(t)g(s)^\top (\alpha \bar a^* - \tilde a(t)) \leq H(\alpha \bar a^*, \tilde a(t+1)) - H(\alpha \bar a^*, \tilde a(t)) + \alpha C \eta(t)^2
$$
where $C$ only depend on $\Vert z\Vert_\infty$. By summing we get 
\begin{align*}
S_\alpha(t) := \frac{\sum_{s=0}^{t-1} \eta(s)g(s)^\top (\alpha \bar a^* - \tilde a(s))}{\sum_{s=0}^{t-1} \eta(s)} 
& \leq \frac{H(\alpha \bar a^*,\tilde a(0)) + C\alpha \sum_{s=0}^{t-1} \eta(s)^2}{\sum_{s=0}^{t-1} \eta(s)}\\
& \lesssim \frac{\alpha \log(\alpha) +\log(t)}{\sqrt{t+1}-1},
\end{align*}
where we only track the dependency in $t$ and $\alpha$. On the other hand, using inequality~\eqref{eq:comparison},
\begin{align*}
S_\alpha(t) & \geq  \frac{\sum_{s=0}^{t-1} \eta(s)(G_{\tilde \beta(s)}(\alpha \bar a^*)-G_{\tilde \beta(s)}(\tilde a(s)))}{\sum_{s=0}^{t-1} \eta(s)}  \\
&\geq \alpha \gamma_1^{(m)} - \log(n) \Big(\frac{\sum_{s=0}^{t-1} \eta(s)/\tilde \beta(s)}{\sum_{s=0}^{t-1} \eta(s)}\Big) - \gamma_1^{(m)} \Big(\frac{\sum_{s=0}^{t-1} \eta(s) \Vert  \tilde a(s)\Vert_1}{\sum_{s=0}^{t-1} \eta(s)}\Big).
\end{align*}
Thus, using the fact that $\Vert \tilde a(s)\Vert_1\leq \Vert a(s)\Vert = \beta(s)$, and that $\tilde \beta(s)\geq 1$, it follows
$$
\frac{\sum_{s=0}^{t-1} \eta(s) \beta(s)}{\sum_{s=0}^{t-1} \eta(s)} \geq \alpha - \frac{\log n}{\gamma_1^{(m)}} - S_\alpha(t).
$$
As a consequence
$$
\sum_{s=0}^{t-1} \frac{\beta(s)}{\sqrt{s+1}} \gtrsim \sqrt{t+1} \left( \alpha  -  \frac{\alpha \log(\alpha) +\log(t)}{\sqrt{t+1}-1}\right).
$$ 
Taking for instance $\alpha = \sqrt{t}$ shows that $\sum_{s=0}^{t-1} \frac{\beta(s)}{\sqrt{s+1}} \gtrsim t - \log(t)$ and thus $\beta(t)\to \infty$.
By Lemma~\ref{lem:asympbeta1} then $\beta(t)$ is increasing for $t\geq t_0$ and grows to $\infty$ at a super-polynomial rate and the conclusion follows.
\end{proof}

We now prove the asymptotic rate of growth of the norm of the iterates.
\begin{lemma}\label{lem:asympbeta1}
If $\eta(t) \asymp 1/\sqrt{t+1}$ and $\beta(t)\to \infty$, then $\beta(t)$ is increasing for $t$ large enough and
$$
\log(\beta(t))\gtrsim  \min\{1, \gamma_1^{(m)}\} \sqrt{t}.
$$
\end{lemma}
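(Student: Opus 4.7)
The strategy is to take logarithms of the multiplicative update for $\beta$, connect each increment to the smooth-margin $G_{\beta(t)}(\bar a(t))$, and then invoke the averaged mirror-ascent bound from the proof of Proposition~\ref{prop:OMA}. Summing the coordinate-wise recursion derived there in $j$ yields
\[
\beta(t+1) - \beta(t) = 4\eta(t)\, a(t)^\top \nabla G_1(a(t)) + 4\eta(t)^2 \sum_j a_j(t)\bigl(\nabla_j G_1(a(t))\bigr)^2.
\]
Since $|\nabla_j G_1|\leq \Vert z\Vert_\infty$ and $\eta(t)\asymp 1/\sqrt{t+1}$, the relative increment $\delta(t) := \beta(t+1)/\beta(t)-1$ satisfies $|\delta(t)|\leq 1/2$ past some threshold; dropping the nonnegative quadratic contribution and applying $\log(1+x)\geq x-x^2$ (valid for $x\geq -1/2$) gives
\[
\log\beta(t+1)-\log\beta(t) \geq 4\eta(t)\,\bar a(t)^\top \nabla G_1(a(t)) - C\eta(t)^2
\]
with $C=C(\Vert z\Vert_\infty)$.

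Next I would compare $\bar a(t)^\top \nabla G_1(a(t)) = \sum_i p_i(t) z_i^\top \bar a(t)$, where $p_i(t)\propto \exp(-\beta(t) z_i^\top \bar a(t))$, to the smooth-margin. Bounding each term of $\sum_i p_i(t)(z_i^\top \bar a(t) - \min_{i'} z_{i'}^\top \bar a(t))$ by $\sup_{v\geq 0} v e^{-\beta v} = 1/(e\beta)$ shows $|\sum_i p_i(t) z_i^\top \bar a(t) - \min_i z_i^\top \bar a(t)| \leq n/(e\beta(t))$, and combining with \eqref{eq:comparison} yields $\bar a(t)^\top \nabla G_1(a(t)) \geq G_{\beta(t)}(\bar a(t)) - \log(n)/\beta(t)$. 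Telescoping and invoking the averaged mirror-ascent estimate obtained en route to Proposition~\ref{prop:OMA}, namely $\sum_t \eta(t)\,G_{\beta(t)}(\bar a(t)) \geq \gamma_1^{(m)}\sum_t \eta(t) - O(\log T)$, I obtain
\[
\log\beta(T) \geq 4\gamma_1^{(m)} \sum_t \eta(t) - 4\log(n)\sum_t \eta(t)/\beta(t) - O(\log T).
\]
With $\sum_t \eta(t)\asymp \sqrt T$ the first term is of order $\gamma_1^{(m)}\sqrt T$; it remains to show $\sum_t \eta(t)/\beta(t) = o(\sqrt T)$, which follows purely qualitatively from the hypothesis $\beta(t)\to\infty$ by splitting the sum at the first $t_\epsilon$ with $\beta(t_\epsilon)\geq 1/\epsilon$ and bounding the tail by $\epsilon \sum_t \eta(t) = O(\epsilon\sqrt T)$. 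This yields $\log\beta(T)\gtrsim \min(1,\gamma_1^{(m)})\sqrt T$ (the $\min$ only serves to absorb constants since my bound is actually linear in $\gamma_1^{(m)}$).

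For the monotonicity of $\beta$ past some $t_0$, the underlying dynamics is gradient ascent on $F_m$, so $F_m(r(t)) = G_1(a(t))$ is nondecreasing; combined with $\beta(t)\to\infty$ and the averaged bound, this forces $G_1(a(t_k)) \geq \tfrac12\gamma_1^{(m)}\beta(t_k)\to\infty$ along a subsequence, hence $G_1(a(t))\to\infty$. For $t$ large enough that $G_1(a(t))>\log n$, the identity $G_{\beta(t)}(\bar a(t)) = G_1(a(t))/\beta(t)$ and the previous comparison give $\bar a(t)^\top \nabla G_1(a(t)) > 0$, and since the quadratic second-order term is nonnegative we conclude $\beta(t+1)>\beta(t)$. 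The main technical obstacle I foresee is extracting the averaged (rather than best-iterate) form of the mirror-ascent bound from the proof of Proposition~\ref{prop:OMA} as a standalone estimate, and obtaining the purely qualitative control $\sum_t \eta(t)/\beta(t) = o(\sqrt T)$ using only $\beta(t)\to\infty$ (without risking circularity with Lemma~\ref{lem:betaunbounded}, whose final step invokes the present lemma).
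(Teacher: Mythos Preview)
Your growth estimate is correct and takes a genuinely different route from the paper. Both proofs start from the same multiplicative recursion for $\beta$ and pass to logarithms; the divergence is in how the drift $\bar a(t)^\top\nabla G_1(a(t))=\sum_i p_i(t)\,z_i^\top\bar a(t)$ is controlled. The paper works \emph{pointwise}: it asserts (from Eq.~\eqref{eq:intermediatebound} and $\beta(t)\to\infty$) that $\bar a(t)\to\bar a^*$, hence $\min_i z_i^\top\bar a(t)\to\gamma_1^{(m)}$, so for large $t$ one has $\beta(t+1)/\beta(t)\geq 1+\tfrac12\gamma_1^{(m)}\eta(t)+O(1/(t{+}1))$; logging and summing yields both the growth rate and eventual monotonicity at once. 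You instead bypass any last-iterate claim by plugging $\bar a(t)^\top\nabla G_1(a(t))\geq G_{\beta(t)}(\bar a(t))-\log(n)/\beta(t)$ directly into the \emph{averaged} mirror-ascent inequality extracted from the proof of Proposition~\ref{prop:OMA}, and then handling $\sum_t\eta(t)/\beta(t)=o(\sqrt T)$ from the bare hypothesis $\beta(t)\to\infty$. This is sound, avoids circularity with Lemma~\ref{lem:betaunbounded}, and is arguably cleaner than invoking convergence of $\bar a(t)$.

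Your monotonicity argument, however, has a gap. You claim $F_m(r(t))=G_1(a(t))$ is nondecreasing ``because the dynamics is gradient ascent on $F_m$'', but this is not automatic in discrete time: the descent lemma would require $\eta(t)m$ to be small relative to the local Lipschitz constant of $\nabla F_m$, and the Hessian of $F_m$ contains the term $(4/m^2)\mathrm{diag}(r)\,\nabla^2 G_1(a)\,\mathrm{diag}(r)$ whose operator norm scales like $\beta(t)$. Since your own estimate gives $\beta(t)\gtrsim\exp(c\sqrt t)$, the step-size condition fails badly, so the passage from ``$G_1(a(t_k))\to\infty$ along a subsequence'' to ``$G_1(a(t))\to\infty$'' is unjustified. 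The paper sidesteps this because its \emph{per-step} lower bound on $\beta(t+1)/\beta(t)$ gives monotonicity directly. Within your framework, a clean fix is to bootstrap: once $\log\beta(T)\gtrsim\gamma_1^{(m)}\sqrt T$ is established, $\sum_t\eta(t)/\beta(t)$ is summable, and the mirror-ascent inequality then upgrades to a last-iterate statement $G_{\beta(t)}(\bar a(t))\to\gamma_1^{(m)}$; combined with $\log(n)/\beta(t)\to0$ this gives $\bar a(t)^\top\nabla G_1(a(t))>0$ eventually, and your own recursion (with its nonnegative quadratic term) then yields $\beta(t+1)>\beta(t)$.
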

\begin{proof}
We have for all $t$,
$$
\frac{\beta(t+1)}{\beta(t)} = 1 +\eta(t) \nabla S(Za(t))^\top Z\bar a(t) + O(1/(t+1)).
$$
By Eq.~\eqref{eq:intermediatebound} (which holds irrespective of this lemma), we know that $\beta(t)\to \infty$ implies $\bar a(t) \to \bar a^*$ where $\bar a^*$ is the $\ell_1$-max-margin solution. Since $\nabla S(Za(t)) \in \Delta^{n-1}$, it holds for $t$ large enough
$$
\frac{\beta(t+1)}{\beta(t)} \geq 1 +\frac12 \gamma_1^{(m)} \eta(t)+ O(1/(t+1)).
$$
Taking the logarithm and summing, we get
$$
\log(\beta(t)) -\log(\beta(0))\geq \sum_{s=0}^{t-1}\left(\frac12 \gamma_1^{(m)}\eta(s) +O(1/s) \right) = \frac12 \gamma_1^{(m)} \sum_{s=0}^{t-1} \eta(s) + O(\log(t)).
$$
The result follows since $\sum_{s=0}^{t-1}(t+1)^{-1/2} \geq 2(\sqrt{t-1}-1)$.
\end{proof}

\section{Appendix to Section~\ref{sec:convex}}\label{app:output}

Let us recall Proposition~\ref{prop:PGD} and prove it.

\begin{proposition}
Let $a(t) = r(t)/m$, $\beta(t) = \max\{ 1, \max_{0\leq s\leq t} \sqrt{m}\Vert a(t)\Vert_2\}$ and $\bar a(t) = a(t)/\beta(t)$ and assume that $\gamma_2^{(m)}$ is positive. For the step-sizes $\eta(t) = \beta(t) {\sqrt{2}}/(\Vert z\Vert_\infty\sqrt{t+1})$ and initialization $r(0)=0$, it holds
$$
\max_{0\leq s\leq t-1}\min_{i\in [n]} z_i^\top \bar a(s) \geq \gamma_2^{(m)} -  \frac{\Vert z\Vert_\infty}{\sqrt{t}}\left( 2\sqrt{2} +\frac{\sqrt{3}\log n}{\gamma_2^{(m)}} \right) .
$$
\end{proposition}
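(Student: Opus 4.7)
\textbf{Proof proposal for Proposition~\ref{prop:PGD}.}

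\emph{Step 1 (Reformulation as projected gradient ascent).} The plan is to first show that the iterates $\bar a(t)$ exactly implement online projected gradient ascent on the Euclidean ball $B = \{a\in\RR^m \;:\; \|a\|_2\leq 1/\sqrt{m}\}$ for the sequence of concave functions $G_{\beta(t)}$ with effective step-sizes $\tilde\eta_t = \eta(t)/(m\beta(t)) = \sqrt{2}/(m\|z\|_\infty\sqrt{t+1})$. The key identities are the scaling relation $\nabla G_\beta(a) = \nabla G_1(\beta a)$, which follows from $G_\beta(a) = \beta^{-1} G_1(\beta a)$, and the gradient formula $\nabla F(r)= m^{-1}\nabla G_1(a)$. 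From there, dividing the ascent update by $\beta(t)$ gives $\bar a(t) + \tilde\eta_t \nabla G_{\beta(t)}(\bar a(t)) = a(t+1)/\beta(t)$; the normalization by $\beta(t+1)=\max(\beta(t),\sqrt m\|a(t+1)\|_2)$ then reproduces the Euclidean projection onto $B$, because when this inner maximum is attained at $t+1$ the projection maps any point outside $B$ to its radial rescaling onto the sphere of radius $1/\sqrt{m}$.

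\emph{Step 2 (Regret bound).} I would then invoke the anytime-step-size bound for online projected gradient ascent: for concave $f_s$ on a convex set of diameter $D$ with gradient norm bounded by $G$, and step-sizes $\tilde\eta_s = D/(G\sqrt{s+1})$, the standard telescoping argument (e.g.~\citet[Chap.~4]{bubeck2015convex}) gives
\[
\sum_{s=0}^{t-1}\bigl(f_s(x^\star)-f_s(x_s)\bigr) \leq \frac{D^2}{2\tilde\eta_{t-1}} + \frac{G^2}{2}\sum_{s=0}^{t-1}\tilde\eta_s,
\]
and both terms evaluate to $O(DG\sqrt{t})$ \emph{without any logarithmic factor}. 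Here $D \leq 1/\sqrt m$ (since $\bar a(0)=0$ and $\|a^\star\|_2\leq 1/\sqrt m$) and $G\leq \sqrt m\|z\|_\infty$ (since $\nabla G_{\beta(s)}(\bar a(s)) = Z^\top p$ for some $p\in\Delta^{n-1}$, and each row of $Z$ has $\ell_2$-norm at most $\sqrt m\|z\|_\infty$), so the chosen step-sizes are exactly of the form $D/(G\sqrt{s+1})$ (up to the constant $\sqrt 2$), yielding a regret bounded by $2\sqrt 2\|z\|_\infty\sqrt t$.

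\emph{Step 3 (From soft-min to true margin).} Using the sandwich inequality $\min_i z_i^\top a \leq G_\beta(a) \leq \min_i z_i^\top a + \log(n)/\beta$ (analogous to~\eqref{eq:comparison}), together with $G_{\beta(s)}(a^\star)\geq \min_i z_i^\top a^\star = \gamma_2^{(m)}$, I obtain
\[
\sum_{s=0}^{t-1}\bigl(\gamma_2^{(m)} - \min_i z_i^\top \bar a(s)\bigr) \leq \mathrm{Regret} + \log n\sum_{s=0}^{t-1}\frac{1}{\beta(s)},
\]
and dividing by $t$ and using $\max_s(\cdot)\geq \mathrm{avg}_s(\cdot)$ yields the desired form, provided I can bound the approximation term by $\sqrt{3}\|z\|_\infty \log(n)/(\gamma_2^{(m)}\sqrt t)$.

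\emph{Step 4 (Growth of $\beta$).} The main obstacle is controlling $\sum_{s<t} 1/\beta(s)$. Expanding $m\|a(t+1)\|_2^2 = m\|a(t)\|_2^2 + 2\eta(t)\langle a(t),\nabla G_1(a(t))\rangle + O(\eta(t)^2)$ and using $\langle a(t),\nabla G_1(a(t))\rangle = \beta(t)\sum_i p_i (z_i^\top \bar a(t)) \geq \beta(t)\min_i z_i^\top \bar a(t)$, one obtains after plugging in $\eta(t)=\beta(t)\sqrt 2/(\|z\|_\infty\sqrt{t+1})$ that, whenever the running maximum defining $\beta$ is attained at $t+1$, $\beta(t+1)^2/\beta(t)^2 \geq 1 + 2\sqrt 2 \min_i z_i^\top \bar a(t)/(\|z\|_\infty\sqrt{t+1})$. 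This yields multiplicative, essentially $\exp(c\gamma_2^{(m)}\sqrt t/\|z\|_\infty)$-type growth of $\beta$ once the margin proxy $\min_i z_i^\top \bar a$ is bounded below by a constant fraction of $\gamma_2^{(m)}$. The delicate point is bootstrapping: the regret bound from Step 2 guarantees that $\min_i z_i^\top \bar a(s)$ is close to $\gamma_2^{(m)}$ \emph{on average}, but $\beta(s)$ must already be large for that ``on average'' to imply a small $\sum 1/\beta(s)$. I would resolve this by a one-shot inductive argument: split the time horizon into the first iterates where $\min_i z_i^\top \bar a(s) < \gamma_2^{(m)}/2$ and the remainder; bound the count of the former by $O(\sqrt t/\gamma_2^{(m)})$ via Steps 2--3, and show that on the latter $\beta$ grows at least as $\gamma_2^{(m)}\sqrt s$, making $\sum 1/\beta(s) = O(\sqrt t/\gamma_2^{(m)})$. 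Plugging this back into Step 3 finishes the proof with the advertised constants.
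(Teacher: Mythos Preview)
Your Steps 1--3 match the paper's argument closely: the reduction to online projected gradient ascent on the ball of radius $1/\sqrt{m}$, the regret bound yielding $2\sqrt{2}\|z\|_\infty/\sqrt{t}$, and the conversion from soft-min to true margin via the sandwich inequality are all exactly as in the paper.

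The gap is in Step 4. Your proposed bootstrapping does not close: bounding the number of ``bad'' iterates where $\min_i z_i^\top \bar a(s)<\gamma_2^{(m)}/2$ ``via Steps 2--3'' gives
\[
|B|\cdot \tfrac{\gamma_2^{(m)}}{2} \;\leq\; \sum_{s<t}\bigl(\gamma_2^{(m)}-\min_i z_i^\top \bar a(s)\bigr) \;\leq\; \mathrm{Regret} + \log n\sum_{s<t}\frac{1}{\beta(s)},
\]
and the right-hand side already contains the quantity $\sum_{s<t}1/\beta(s)$ you are trying to bound. Solving the resulting self-referential inequality would require $\gamma_2^{(m)}$ to be large relative to $\log n$, which is not assumed. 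Tracking $\|a(t)\|_2^2$ directly is problematic for exactly this reason: its increment is proportional to $\langle a(t),\nabla G_1(a(t))\rangle = \beta(t)\sum_i p_i z_i^\top \bar a(t)$, which is only controlled once you already know the current margin is positive.

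The paper sidesteps this entirely by tracking the \emph{inner product with the optimum} $\bar a^\star$ instead of the norm. Since $\nabla G_1(a(t))=Z^\top p(t)$ with $p(t)\in\Delta^{n-1}$, one has unconditionally
\[
a(t+1)^\top \bar a^\star - a(t)^\top \bar a^\star \;=\; \frac{\eta(t)}{m}\,p(t)^\top Z\bar a^\star \;\geq\; \frac{\eta(t)}{m}\,\gamma_2^{(m)},
\]
so $a(t)^\top \bar a^\star$ grows at every step regardless of the current margin. Cauchy--Schwarz together with $\|\bar a^\star\|_2\leq 1/\sqrt{m}$ then gives a direct lower bound $\beta(t)\geq \sqrt{m}\|a(t)\|_2 \gtrsim \gamma_2^{(m)}\sum_{s<t}\eta(s)$, and since $\eta(s)\propto \beta(s)/\sqrt{s+1}$ this becomes the clean self-reinforcing inequality $\beta(t)\geq C\sum_{s<t}\beta(s)/\sqrt{s+1}$ with $C=\gamma_2^{(m)}\sqrt{2}/\|z\|_\infty$. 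Using $\beta(s)\geq 1$ once yields $\beta(t)\gtrsim C\sqrt{t}$, hence $\sum_{s<t}1/\beta(s)\leq \sqrt{3t}\|z\|_\infty/\gamma_2^{(m)}$ with no circularity and no bootstrapping.
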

\begin{proof}
Using the fact that $a(t)=r(t)/m$ and $m \nabla F(r) = \nabla G(r/m)$ we have
$$
a(t+1) = a(t) +\frac{\eta(t)}{m}\nabla G(a(t)).
$$
It follows that 
$$
\frac{a(t+1)}{\beta(t)} = \bar a(t) +\frac{\eta(t)}{m\beta(t)} \nabla G_{\beta(t)}(\bar a(t)) =: b(t+1).
$$
and thus
$$
\bar a(t+1) = \frac{\beta(t)}{\beta(t+1)} \frac{a(t+1)}{\beta(t)} =  \frac{\beta(t)}{\beta(t+1)} b(t+1).
$$
Finally, since $\beta(t)\max\{1,\sqrt{m}\Vert b(t+1)\Vert_2\} = \max\{\beta(t),\sqrt{m}\Vert a(t+1)\Vert_2\}=\beta(t+1)$ it follows that $\bar a(t+1) = b(t+1)/\max\{1,\sqrt{m}\Vert b(t+1)\Vert_2\}$. Thus $\bar a(t)$ follows the iterations $\bar a(0)=0$ and
$$
\left\{
\begin{aligned}
b(t+1) &= \bar a(t) + \frac{\eta(t)}{m\beta(t)} \nabla G_{\beta(t)}(\bar a(t))\\
\bar a(t+1) &= b(t+1)/\max\{ 1, \sqrt{m}\Vert b(t+1)\Vert_2\}.
\end{aligned}
\right.
$$
These are \emph{online projected gradient ascent} iterations on the set $\{ a \in \RR^m \;;\; \sqrt{m}\Vert a\Vert_2\leq 1\}$ and for the sequence of functions $G_{\beta(t)}$, with step-size $\eta(t)/(m\beta(t))$.  Using the fact that the Lipschitz constant of $G_\beta$ if upper bounded by $\max_{i\in [n]} \Vert z_i\Vert_2 \leq \sqrt{m}\Vert z\Vert_\infty$ and the diameter of the constraint set is $2/\sqrt{m}$, we have the classical bound, with the step-size $\eta(t)/(m\beta(t)) = {\sqrt{2}}/(m\Vert z\Vert_\infty \sqrt{t+1})$,
$$
\frac1t \sum_{s=0}^{t-1} \left( G_{\beta(s)}(\bar a^*) - G_{\beta(s)}(\bar a(s)) \right) \leq \frac1t DL\sqrt{2t} =  \frac{2\sqrt{2}\Vert z\Vert_\infty}{\sqrt{t}}.
$$
Now using the bound of Eq.~\eqref{eq:comparison}, it follows
$$
\max_{0\leq s\leq t-1}\min_{i\in [n]} z_i^\top \bar a(s) \geq \gamma_2^{(m)} - \frac{\log n}{t}\sum_{s=0}^{t-1} \frac{1}{\beta(s)} - \frac{2\sqrt{2}\Vert z\Vert_\infty}{\sqrt{t}}.
$$
Let us now look at the evolution of $\beta(t)$. Let $\bar a^*$ be a max $\ell_2$-margin solution and remark that
$$
a(t+1)^\top \bar a^* - a(t)^\top \bar a^* = \eta(t) \nabla G(a(t))^\top \bar a^* \geq \eta(t) \gamma_2^{(m)}
$$
since $\nabla G(a(t)) \in \Delta^{m-1}$. It follows that $\beta(t) \geq \sqrt{m}\Vert a(t)\Vert_2 \geq m \gamma_2^{(m)} \sum_{s=0}^{s-1}\eta(s)$. 
Using the fact that $\beta(t) \geq C \sum_{s=0}^{s-1} \beta(s)/\sqrt{s+1}$ with $C= \gamma_2^{(m)} \sqrt{2}/\Vert z\Vert_\infty$ and the bound $\sum_{s=0}^{s-1} 1/\sqrt{s+1} \geq 2(\sqrt{t-1}-1)$, it follows that $\beta(t) \geq 2C \sqrt{t+1}/\sqrt{6}$ and thus 
$$ 
\sum_{s=0}^{t-1} \frac{1}{\beta(s)} \leq \frac{\sqrt{6}}{2C}\sum_{s=0}^{t-1} \frac{1}{\sqrt{s+1}} \leq\frac{\sqrt{6t}}{C}. 
$$
Plugging into the previous bound gives the conclusion. Note that we did not attempt to make the lower bound on $\beta(t)$ tight.
\end{proof}

\section{Appendix to Section~\ref{sec:generalization}: generalization bounds}\label{app:generalization}
With the notations of Section~\ref{sec:generalization}, let us first lower bound the margins in $\Ff_1$ and in $\Ff_2$.
\begin{lemma}
Assume that $\Vert x_i\Vert_2 \leq R$ for $i\in [n]$. For any $\epsilon \in (0,1)$ and $r\in [d]$, there exists $C(r),C_\epsilon(r)>0$ such that
\begin{align*}
\gamma_2 \geq \min\left\{ C(d), C_\epsilon (d) \left(\frac{\Delta_d(S_n)}{R}\right)^{\frac{d+3}{2-\epsilon}}\right\}
\quad\text{and}\quad
\gamma_1 \geq \min_{r \in [d]} \min \left\{C(r), C_\epsilon (r) \left(\frac{\Delta_r(S_n)}{R}\right)^{\frac{r+3}{2-\epsilon}}\right\}.
\end{align*}
\end{lemma}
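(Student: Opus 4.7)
The strategy is to exhibit, for each case, an explicit classifier $f$ satisfying $y_i f(x_i)\geq 1$ whose norm we can control from above, and then invoke $\gamma_k \geq 1/\Vert f\Vert_{\Ff_k}$. I would first use the definition of $\Delta_r(S_n)$ to pick a rank-$r$ orthogonal projection $P$ such that the projected points $\tilde x_i := P x_i$ satisfy $\Vert \tilde x_i - \tilde x_{i'}\Vert_2 \geq \Delta_r(S_n)$ whenever $y_i \neq y_{i'}$; all $\tilde x_i$ lie in the ball of radius $R$ in the $r$-dimensional image of $P$. Any function $g:\RR^r \to \RR$ expressed via ReLU features lifts to $f(x) := g(Px)$ on $\RR^d$ with $\Vert f\Vert_{\Ff_1(\RR^d)} \leq \Vert g\Vert_{\Ff_1(\RR^r)}$, since each ReLU feature $\theta \mapsto (\theta^\top (z,1))_+$ on $\RR^r$ pulls back to $x \mapsto ((P^\top \theta)^\top (x,1))_+$, a ReLU feature on $\RR^d$ with the same mass. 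It therefore suffices to build $g$ on $\RR^r$.

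For the construction of $g$, fix a smooth bump $\chi:\RR^r \to [0,1]$ with $\chi(0)=1$ and supported in the unit ball, and set
$$
g(z) = \sum_{i=1}^n y_i\,\chi\!\bigl(3(z-\tilde x_i)/\Delta_r(S_n)\bigr),
$$
so that positive and negative bumps have disjoint supports, $g(\tilde x_i)=y_i$, and hence $y_i f(x_i)=1$. The main technical step is to bound $\Vert g\Vert_{\Ff_1(\RR^r)}$ using a Barron-type embedding into the ReLU variation space on $\RR^r$: concretely, an inequality of the form $\Vert g\Vert_{\Ff_1(\RR^r)}\lesssim \int \Vert\omega\Vert^{(r+3)/2}\,\vert\hat g(\omega)\vert\d\omega$, or equivalently an embedding $B^{(r+3)/2}_{1,\infty}(\RR^r) \hookrightarrow \Ff_1(\RR^r)$. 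Combining this with the scaling of $\chi_\rho(z) := \chi(z/\rho)$, a packing bound $n \lesssim (R/\Delta_r(S_n))^r$ on the number of data points, and the confinement of all bumps in the ball of radius $R$, yields $\Vert g\Vert_{\Ff_1(\RR^r)} \lesssim (R/\Delta_r(S_n))^{(r+3)/(2-\epsilon)}$; the denominator $2-\epsilon$ rather than $2$ absorbs the logarithmic and endpoint constants of the Besov embedding. The $\Ff_2$ bound follows by running the same construction in full dimension ($r=d$), substituting the analogous RKHS embedding for the ReLU activation kernel from~\citet{bach2017breaking}; no dimension-reduction gain is available since the RKHS is rotation-invariant.

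The main obstacle is the third step, i.e., obtaining the sharp exponent $(r+3)/(2-\epsilon)$. A naive triangle inequality on the $n$ disjoint bumps would inflate the bound by a packing factor of order $(R/\Delta_r(S_n))^r$, raising the effective exponent to roughly $3(r+1)/2$. Recovering the advertised rate requires either a global Fourier-analytic bound on $g$ that exploits the disjointness and alternating signs of the bumps, or interpolating between Besov smoothness indices to trade smoothness against packing count, which is the origin of the arbitrarily small $\epsilon$ loss.
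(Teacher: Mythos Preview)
Your high-level strategy---exhibit an explicit classifier with margin $\geq 1$, bound its norm, reduce dimension via a projection---matches the paper's, but the execution diverges at the two key technical steps, and the hardest one is left unresolved.

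\textbf{Construction of the classifier.} The paper does \emph{not} sum bumps over data points. It sets
\[
f_r(x) = 2\max\Bigl(0,\, 1 - \tfrac{2}{\Delta_r}\dist_{P_r(D_+)}(P_r x)\Bigr) - 2\max\Bigl(0,\, 1 - \tfrac{2}{\Delta_r}\dist_{P_r(D_-)}(P_r x)\Bigr),
\]
a single function built from distances to the projected classes. This is $4/\Delta_r$-Lipschitz, bounded by $2$, and has margin $2$, with \emph{no dependence on $n$} anywhere. Your bump sum brings $n$ in explicitly, and the packing bound $n\lesssim (R/\Delta_r)^r$ you invoke is generally false: $\Delta_r$ separates only \emph{opposite}-class points, so same-class points may cluster arbitrarily, leaving $n$ uncontrolled (and the Lipschitz constant of your $g$ can blow up in that case too). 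Thinning to a net would repair this, but the distance-based construction avoids the issue from the start.

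\textbf{Norm bound.} Here the paper's route is decisively simpler. Instead of a Barron/Fourier embedding into $\Ff_1$, it quotes \citet[Prop.~6]{bach2017breaking}: any $\eta$-Lipschitz function on the $R$-ball is uniformly $1$-approximated by some $\hat f$ with $\Vert \hat f\Vert_{\Ff_2}\leq N$ as soon as $C(d)\,\eta\,(N/\eta)^{-2/(d+1)}\log(N/\eta)\leq 1$. With $\eta\asymp R/\Delta_d$, absorbing the log via $\log u\leq C_\epsilon u^{\epsilon/(d+1)}$ and inverting gives $N\lesssim \eta^{(d+3-\epsilon)/(2-\epsilon)}$, hence $\gamma_2\geq 1/N$. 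The exponent $(d+3)/(2-\epsilon)$ is already present in Bach's approximation rate; no packing counts, sign cancellations, or Besov interpolation are needed. For $\gamma_1$ the paper then uses the inclusion $\Ff_2(\RR^r)\circ P_r \subset \Ff_1(\RR^d)$ with norm decrease (also from \citet[Sec.~4.5]{bach2017breaking}), so the same calculation in dimension $r$ suffices.

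\textbf{The gap.} You correctly diagnose that a naive triangle inequality over bumps ruins the exponent, but the remedies you propose (global Fourier bounds exploiting cancellation, or Besov interpolation) are speculative, and it is not clear either delivers exactly $(r+3)/(2-\epsilon)$; nor is it clear that the Barron-type inequality $\Vert g\Vert_{\Ff_1(\RR^r)}\lesssim \int \Vert\omega\Vert^{(r+3)/2}\vert\hat g(\omega)\vert\d\omega$ holds in the form you state. The paper's route circumvents the difficulty entirely: once the classifier is a single bounded Lipschitz function, the black-box approximation result carries the whole weight.
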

\begin{proof}
Let $\dist_\mathcal{S}$ be the distance function to a set $\mathcal{S}$, i.e.\ $\dist_\mathcal{S}(x) = \inf_{\tilde x\in \mathcal{S}} \Vert x-\tilde x\Vert_2$, which is $1$-Lipschitz, and let $D_{\pm} = \{ x_i\;;\; y_i=\pm1\}$. For $P_r$ a projection that achieves the supremum in Eq.~\eqref{eq:interclass} (which exists by compactness of Grassmannians and continuity of the objective), we consider the following function
$$
f_r(x) = 2 \max\left( 0, 1-\frac{2 \dist_{P_r(D_+)}(P_r(x))}{\Delta_r(S_n)} \right) - 2\max\left( 0, 1-\frac{2 \dist_{P_r(D_-)}(P_r(x))}{\Delta_r(S_n)} \right).
$$
This function is $4/\Delta_r(S_n)$-Lipschitz continuous, satisfies $\Vert f\Vert_\infty \leq 2$ and $y_if(x_i)=2$ for all $i\in [n]$. Let us first consider the case $r=d$. Using the approximation results of Lipschitz functions in $\Ff_2$ from~\citet[Prop. 6]{bach2017breaking}, we know that if $N>0$ is larger than a constant independent of $\Delta_d(S_n)$ and satisfies
\[
C(d)\eta(N/\eta)^{-2/(d+1)}\log(N/\eta) \leq 1,
\]
where $\eta=\max\{2,4R/\Delta_d(S_n)\} =4R/\Delta_d(S_n)$, then there exists $\hat f$ such that $\Vert \hat f\Vert_{\Ff_2}\leq N$ and $\sup_{\Vert x\Vert_2 \leq R} \vert \hat f(x)-f_d(x)\vert \leq 1$. Since $\hat f/N$ is feasible for the $\Ff_2$-max-margin problem Eq.~\eqref{eq:maxmargin2}, this shows that $\gamma_2 \leq 1/N$ and it remains to estimate how large $N$ must be. In the next computations, the dimension dependent constant $C(d)$ might change from line to line. Using the bound $\log(u) \leq C(\epsilon,d)u^{\epsilon/(d+1)}$ for $\epsilon>0$, we obtain the stronger condition on $N$:
$$
C(\epsilon,d) \eta(N/\eta)^{(\epsilon-2)/(d+1)}\leq 1 \!\!\! \quad \Leftrightarrow\!\!\! \quad N \geq C(\epsilon,d)\eta^{(d+3-\epsilon)/(2-\epsilon)} \leq C(\epsilon,d) \left(\frac{\Delta_d(S_n)}{R}\right)^{(d+3)/(2-\epsilon)}\!\!\!\!.
$$
This gives the bound on $\gamma_2$.
For the bound on $\gamma_1$, it follows from the fact that for all $r\in [d]$, $\Ff_1$ contains the functions of the form $f\circ P_r$ where $f$ belongs to the space $\Ff_2$ over $\RR^r$ and $\Vert f\circ P_r\Vert_{\Ff_1}\leq \Vert f\Vert_{\Ff_2}$, see arguments and details in~\citet[Section 4.5]{bach2017breaking}.
\end{proof}

\begin{theorem}[Generalization bound]
For any $\epsilon \in (0,1)$ and $r\in [d]$, there exist $C(r),C_\epsilon(r)>0$ such that the following holds. If $(x,y)\sim \mathbb{P}$ is such that for some $R>0$ and $0<\Delta_r({\mathbb{P}})\leq C(r)$, it holds $\Delta_r(S_n) \leq \Delta_r(\mathbb{P})$ and $\Vert x\Vert_2\leq R$ almost surely, then for $f$ the $\Ff_1$-max-margin classifier, it holds with probability at least $1-\delta$ over the choice of i.i.d.~samples $S_n = (x_i,y_i)_{i=1}^n$,
\[
\mathbb{P}[yf(x)<0] \leq \frac{C_\epsilon(r)}{\sqrt{n}} \left(\frac{R}{\Delta_r(\mathbb{P})}\right)^{\frac{r+3}{2-\epsilon}} + \sqrt{\frac{\log(B)}{n}} + \sqrt{\frac{\log(1/\delta)}{2n}},
\]
where $B=\log_2(4(R+1)C_2(r))+(r+2)\log_2(R/\Delta_r(\mathbb{P}))$. The same bound holds for the $\Ff_2$-max-margin classifier for $r=d$.
\end{theorem}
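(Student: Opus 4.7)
The plan follows exactly the two-step scheme sketched immediately after the statement: first use the margin lower bound from the preceding lemma, then plug it into a margin-based generalization bound together with a Rademacher complexity estimate for the unit ball $B_{\Ff_1}$ of $\Ff_1$. Concretely, I would first invoke a Koltchinskii--Panchenko-type inequality~\citep{koltchinskii2002empirical}: with probability at least $1-\delta$, for every $f \in B_{\Ff_1}$ and every $\gamma>0$ in a dyadic grid,
\[
\mathbb{P}[yf(x) < 0] \leq \hat{\mathbb{P}}_n[yf(x) < \gamma] + \frac{4 \mathcal{R}_n(B_{\Ff_1})}{\gamma} + \sqrt{\frac{\log\log_2(2/\gamma)}{n}} + \sqrt{\frac{\log(1/\delta)}{2n}}.
\]
Applied to the $\Ff_1$-max-margin classifier $f$ at $\gamma = \gamma_1$, the empirical term vanishes since every training point achieves margin at least $\gamma_1$ by definition.

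Next, I would control $\mathcal{R}_n(B_{\Ff_1})$. For the ReLU feature $\phi(\theta,x) = c(a^\top(x,1))_+$ with $\theta=(a,c)\in \SS^{p-1}$, each feature is bounded by $(R+1)/2$ on $\{\Vert x\Vert_2\leq R\}$, and the usual convex-hull argument (exchanging the supremum over $\theta$ with the Rademacher sum via the contraction lemma applied to the $1$-Lipschitz positive part) gives the standard bound $\mathcal{R}_n(B_{\Ff_1}) \leq C_0 (R+1)/\sqrt{n}$; see~\citet{bach2017breaking}. Combining this with the margin lower bound from the preceding lemma, together with the a.s.~inequality $\Delta_r(S_n) \geq \Delta_r(\mathbb{P})$ (restricting the inner infimum to a subset can only increase it), yields
\[
\gamma_1 \geq C_\epsilon(r)\left(\Delta_r(\mathbb{P})/R\right)^{(r+3)/(2-\epsilon)},
\]
provided the condition $\Delta_r(\mathbb{P})\leq C(r)$ ensures that the $\min$ in the lemma is attained by the power term rather than by the constant. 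Substituting into the Rademacher term produces the first summand $\frac{C_\epsilon(r)}{\sqrt{n}}(R/\Delta_r(\mathbb{P}))^{(r+3)/(2-\epsilon)}$ of the bound.

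Finally, the $\sqrt{\log B / n}$ contribution is obtained by upper bounding $\log\log_2(2/\gamma_1)$ using the explicit lower bound on $\gamma_1$: choosing $\epsilon$ small enough that the exponent is at most $r+2$ gives $\log_2(2/\gamma_1) \leq \log_2(4(R+1)C_2(r)) + (r+2)\log_2(R/\Delta_r(\mathbb{P})) = B$. The statement for the $\Ff_2$-max-margin classifier is obtained in exactly the same way by specialising to $r=d$, using the corresponding margin lower bound from the same lemma, and replacing $\mathcal{R}_n(B_{\Ff_1})$ with the RKHS Rademacher complexity $\mathcal{R}_n(B_{\Ff_2}) \leq C' R/\sqrt{n}$.

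The main technical points I expect are (i) the careful tracking of constants and exponents through the dyadic union bound in order to match the precise closed form of $B$, and (ii) the Rademacher complexity bound on $B_{\Ff_1}$, which, although standard, requires reducing the unbounded parameter space to the compact sphere $\SS^{p-1}$ via the $2$-homogeneous projection $\Pi_2$ introduced in Section~\ref{sec:measure} before applying contraction. Neither is conceptually delicate once the margin lower bound of the preceding lemma is available, so the substantive content of the theorem really sits in that margin estimate rather than in the generalization step itself.
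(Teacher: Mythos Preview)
Your proposal is correct and follows essentially the same route as the paper: apply the Koltchinskii--Panchenko margin bound (stated here as Theorem~\ref{thm:margingene}), plug in the Rademacher complexity estimate $\mathrm{Rad}_n(B_{\Ff_1})\leq 1/\sqrt n$ from~\citet[Prop.~7]{bach2017breaking}, and use the margin lower bound from the preceding lemma. The paper's proof is only a few lines and matches your sketch, additionally noting the sup bound $\sup_{\Vert x\Vert\leq R}\vert f(x)\vert\leq R+1$ for $f\in B_{\Ff_1}$ (this is the constant $C$ entering the $\log\log_2(4C/\gamma)$ term) and the inclusion $B_{\Ff_2}\subset B_{\Ff_1}$ via Jensen to handle the $\Ff_2$ case.
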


\begin{proof}
This is a direct application of the margin-based generalization bounds of Theorem~\ref{thm:margingene}, using that for any $f\in \Ff_1$ with $\Vert f\Vert_{\Ff_1}\leq 1$, 
$$
\sup_{\Vert x\Vert_2\leq R} f(x) \leq \sup_{\Vert x\Vert_2\leq R, \theta \in \SS^{p-1}} \phi(\theta,x) \leq R+1,
$$
and the same holds in $\Ff_2$ since for $g\in \Ff_2$ it holds $g\in \Ff_1$ and $\Vert g\Vert_{\Ff_1}\leq \Vert g\Vert_{\Ff_2}$ by Jensen's inequality. We also use the Rademacher complexity bound 
$
\mathrm{Rad}_n(B_2) \leq \mathrm{Rad}_n(B_1) \leq \frac{1}{\sqrt{n}}.
$
where $B_i$ is the unit ball in $\mathcal{F}_i$. This can be found for instance in~\citet[Prop. 7]{bach2017breaking}.
\end{proof}

In the next theorem, $\Ff$ refers to a hypothesis class, $\mathrm{Rad}_n(\Ff)$ to its Rademacher complexity and $\gamma$ to its margin over the training set (see the cited reference for definitions).
\begin{theorem}[\citet{koltchinskii2002empirical}]\label{thm:margingene}  Assume that $\forall f\in \Ff$ we have $\sup_{x}\vert f(x)\vert \leq C$. Then, with probability at least $1-\delta$ over the sample, for all margins $\gamma>0$ and all $f\in \Ff$ we have
\[
\mathbb{P}[yf(x)<0] \leq 4\frac{\mathrm{Rad}_n(\Ff)}{\gamma} + \sqrt{\frac{\log(\log_2\frac{4C}{\gamma})}{n}} + \sqrt{\frac{\log(1/\delta)}{2n}}.
\]
\end{theorem}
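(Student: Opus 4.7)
The plan is the classical three-step recipe for margin-based generalization bounds: (i) dominate the $0/1$ loss by a Lipschitz surrogate, (ii) apply Rademacher symmetrization for each fixed scale $\gamma$, (iii) peel over a geometric grid of scales to get a bound uniform in $\gamma$.

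First I would introduce the ramp loss
\[
\phi_\gamma(u) = \min\bigl\{1,\max\{0, 1 - u/\gamma\}\bigr\},
\]
which is $(1/\gamma)$-Lipschitz, takes values in $[0,1]$, and satisfies the sandwich $\mathbf{1}[u\leq 0]\leq \phi_\gamma(u)\leq \mathbf{1}[u\leq \gamma]$. In particular, for every $f\in\Ff$,
\[
\mathbb{P}[yf(x)<0]\leq \mathbb{E}[\phi_\gamma(yf(x))],\qquad \hat{\mathbb{E}}[\phi_\gamma(yf(x))]\leq \hat{\mathbb{P}}[yf(x)\leq \gamma].
\]

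Next, for a \emph{fixed} $\gamma>0$, McDiarmid/symmetrization applied to the class $\phi_\gamma\circ(y\Ff)$ (whose elements take values in $[0,1]$) gives, with probability at least $1-\delta'$, uniformly in $f\in\Ff$,
\[
\mathbb{E}[\phi_\gamma(yf(x))]\leq \hat{\mathbb{E}}[\phi_\gamma(yf(x))] + 2\,\mathrm{Rad}_n(\phi_\gamma\circ y\Ff) + \sqrt{\tfrac{\log(1/\delta')}{2n}}.
\]
Talagrand's contraction lemma, together with the fact that multiplying by $y\in\{\pm1\}$ does not change the Rademacher complexity (the Rademacher signs absorb the labels), yields $\mathrm{Rad}_n(\phi_\gamma\circ y\Ff)\leq (1/\gamma)\,\mathrm{Rad}_n(\Ff)$. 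Combining these displays and using $\hat{\mathbb{E}}[\phi_\gamma(yf(x))]\leq \hat{\mathbb{P}}[yf(x)\leq \gamma]$, we obtain for each fixed $\gamma$ a margin bound in which the empirical term vanishes whenever $f$ achieves margin at least $\gamma$ on the sample (which is the regime in which the theorem is applied in the paper).

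The crux is then the peeling step to make $\gamma$ free. I would restrict attention to $\gamma\in(0,2C]$ (for larger $\gamma$ the bound is vacuous since $\phi_\gamma(yf(x))\equiv 1$ once $yf(x)\leq -C$ is impossible), and consider the geometric grid $\gamma_k = 2C\cdot 2^{-k}$ for $k=0,1,2,\dots$ Apply the fixed-$\gamma$ bound at each $\gamma_k$ with failure probability $\delta_k = \delta\cdot 6/(\pi^2(k+1)^2)$, so that $\sum_k\delta_k\leq \delta$; by a union bound, all these inequalities hold simultaneously with probability at least $1-\delta$. For an arbitrary $\gamma\in(0,2C]$, pick the unique $k$ with $\gamma_{k+1}<\gamma\leq \gamma_k$; then $1/\gamma_k\leq 2/\gamma$, so $2\mathrm{Rad}_n(\Ff)/\gamma_k\leq 4\mathrm{Rad}_n(\Ff)/\gamma$, which accounts for the factor $4$ in the stated bound. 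The index satisfies $k+1\leq \log_2(4C/\gamma)$, hence
\[
\sqrt{\tfrac{\log(1/\delta_k)}{2n}}\leq \sqrt{\tfrac{\log(1/\delta)}{2n}} + \sqrt{\tfrac{\log(\log_2(4C/\gamma))}{n}} + O\!\left(\tfrac{1}{\sqrt{n}}\right),
\]
using $\log(1/\delta_k)=\log(1/\delta)+2\log(k+1)+\log(\pi^2/6)$ and $\sqrt{a+b}\leq\sqrt{a}+\sqrt{b}$. Absorbing constants into the $\log\log$ term yields exactly the displayed inequality.

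The main obstacle is bookkeeping in the peeling step: one must balance the geometric spacing (coarser grids hurt the Rademacher term, finer grids hurt the confidence term) and choose a summable sequence $\delta_k$ whose $\log(1/\delta_k)$ inflates only by $\log\log(C/\gamma)$, which is what forces $\delta_k\propto 1/k^2$ rather than an exponential schedule. The Talagrand contraction step is standard but worth stating carefully because it is the only place where the Lipschitz constant $1/\gamma$ of $\phi_\gamma$ enters.
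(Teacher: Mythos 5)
The paper itself does not prove this statement: it is quoted (in abridged form) from Koltchinskii and Panchenko (2002) and used as a black box in the proof of the generalization bound, so there is no internal proof to compare against. Your proposal reconstructs the standard argument behind the cited result --- ramp-loss surrogate, symmetrization plus Talagrand contraction at a fixed scale, then a union bound over a dyadic grid of margins --- which is indeed the right (and essentially the original) route. You also correctly identify that the statement, as quoted, implicitly omits the empirical margin term $\hat{\mathbb{P}}_n[yf(x)\le\gamma]$, i.e., it is meant for $f$ attaining sample margin at least $\gamma$, which is the only regime in which the paper invokes it.

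Two bookkeeping points in your peeling step need repair. First, you anchor the fixed-scale bound at the grid point $\gamma_k \ge \gamma$; but then the surviving empirical term is $\hat{\mathbb{E}}_n[\phi_{\gamma_k}(yf)]$, which need not vanish for an $f$ whose sample margin is only $\gamma$ (it can be close to $1/2$ when all sample margins equal $\gamma$), so the chain of inequalities does not close. You must instead apply the bound at the lower bracket $\gamma_{k+1}<\gamma\le\gamma_k$: then $\phi_{\gamma_{k+1}}(y_if(x_i))=0$ for every $i$, and $2/\gamma_{k+1}=4/\gamma_k\le 4/\gamma$, which is where the factor $4$ genuinely comes from (your inequality $1/\gamma_k\le 2/\gamma$ is true but is not the one doing the work). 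Second, ``absorbing constants into the $\log\log$ term'' is not legitimate uniformly in $\gamma$, since $\log\log_2(4C/\gamma)$ can be arbitrarily small; with the $\pi^2/6$ schedule you are left with a genuine extra additive $O(1/\sqrt{n})$ term. A telescoping schedule such as $\delta_k=\delta/((k+1)(k+2))$, together with care about which index ($k$ versus $k+1$) enters both the confidence term and the bound $k+1\le\log_2(4C/\gamma)$, is what produces constants of the quoted form (the precise ones being those of the cited reference). With these repairs your argument is correct.
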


\section{Proof for the ReLU case}\label{sec:relucase}
In this appendix, we detail how to rigorously cover the case of ReLU networks, i.e.~models as in~Eq.~\eqref{eq:infinitewidth} with a feature function of the form 
$$
\phi(w,x) = b\,(a^\top (x,1))_+,
$$
where $w=(a,b)\in \RR^{d+1}\times \RR$ (thus $w\in \RR^p$ with $p=d+2$) and $(u)_+=\max\{0,u\}$.  The difficulty with that case is that this function is not differentiable in $w$: (i) when $a=0$ for all $x$, or (ii) whenever $a^\top (x,1)=0$. We resolve these two sources of non-differentiability as follows: for (i), we consider a specific initialization that guarantees that $a$ does not vanish along the dynamics and for (ii), we consider an input distribution $\rho$ without atoms, in contrast to the empirical distribution $\rho = \frac1n \sum_{i=1}^n \delta_{x_i}$ that is implicitly used in the rest of the paper (as in~Eq.~\eqref{eq:smooth-margin}). 

\paragraph{Assumption on the input.}  We assume that the input distribution has the following properties:
\begin{enumerate}
\item[{\sf (A4)}] The input distribution $\rho \in \Pp(\RR^d)$ has a bounded density and a compact support denoted $\Xx$. The labels are given by a continuous deterministic function $y:\Xx\to \Yy=\{-1,1\}$.
\end{enumerate}
The assumption on $\rho$ excludes discrete measures. Also, the continuity assumption on $y$ means that the sets where $y=+1$ and where $y=-1$ are disconnected, and this implies that they are at a positive distance from each other since the level-sets $\{y=1\}$ and $\{y=-1\}$ are both compact and with empty intersection. This distribution $\rho$ could be for instance the population distribution of the input, or also could be obtained by taking the expectation over small perturbations of the input training set, which is a well-known smoothing technique (e.g.,~\citet{duchi2012randomized}).  This leads to the definition of the \emph{population smooth-margin}
$$
S(f) = -\log \Big(\int_{\Xx} \exp(-f(x))\d\rho(x)\Big)
$$
defined for $f\in \Cc(\Xx)$, where $\Cc(\Xx)$ is the space of continuous and real-valued functions on $\Xx$ endowed with the supremum norm. Let us give some facts about this function.

\begin{lemma}\label{lem:reluloss}
The function $S:\Cc(\Xx) \to \RR$ is Fr\'echet differentiable with a gradient at $f\in \Cc(\Xx)$ given by $\nabla S[f]\in \Pp(\Xx)$:
$$
\nabla S[f](\!\d x) = \frac{\exp(-f(x))\rho(\!\d x)}{\int_{\Xx} \exp(-f(x'))\d\rho(x')}.
$$
Also, as a function $\Cc(\Xx)\to L^1(\rho)$, the function $f \mapsto \frac{\d \nabla S[f]}{\d \rho}$ is Lipschitz continuous on bounded sets in $\Cc(\Xx)$.
\end{lemma}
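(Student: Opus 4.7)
The plan is to reduce the lemma to the Fr\'echet differentiability of the exponential moment functional $\Phi(f) = \int_\Xx \exp(-f(x))\d\rho(x)$ and then apply the chain rule to $S = -\log \Phi$. First, I would fix $f\in \Cc(\Xx)$ and compute, for $g\in \Cc(\Xx)$, the remainder $\Phi(f+g)-\Phi(f)+\int e^{-f}g\,\d\rho = \int e^{-f}(e^{-g}-1+g)\,\d\rho$. Using the pointwise bound $|e^{-u}-1+u|\le \tfrac12 u^2 e^{|u|}$ and the fact that $\rho\in \Pp(\Xx)$, this remainder is bounded by $\tfrac12 \|g\|_\infty^2 e^{\|g\|_\infty}\Phi(f) = o(\|g\|_\infty)$, so the map $g\mapsto -\int e^{-f}g\,\d\rho$ is the Fr\'echet derivative of $\Phi$. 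Since $\Phi(f)\ge e^{-\|f\|_\infty}>0$, the chain rule gives the Fr\'echet differentiability of $S$ and identifies the derivative at $f$ as the continuous linear functional $g\mapsto \Phi(f)^{-1}\int e^{-f}g\,\d\rho$, which by Riesz representation is integration against the probability measure $\nabla S[f]$ with density $e^{-f}/\Phi(f)$ with respect to $\rho$.

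For the Lipschitz claim, set $h_f = e^{-f}/\Phi(f)$ and $f_1,f_2$ with $\|f_i\|_\infty\le M$. I would split
\begin{equation*}
h_{f_1}-h_{f_2} = \frac{e^{-f_1}-e^{-f_2}}{\Phi(f_1)} + e^{-f_2}\Bigl(\frac{1}{\Phi(f_1)}-\frac{1}{\Phi(f_2)}\Bigr),
\end{equation*}
and bound each piece using the elementary estimates $|e^{-a}-e^{-b}|\le e^{M}|a-b|$ for $|a|,|b|\le M$, the two-sided bound $e^{-M}\le \Phi(f_i)\le e^{M}$, and $|\Phi(f_1)-\Phi(f_2)|\le e^{M}\|f_1-f_2\|_\infty$. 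Integrating against $\rho$ yields
\begin{equation*}
\Bigl\|\tfrac{\d\nabla S[f_1]}{\d\rho}-\tfrac{\d\nabla S[f_2]}{\d\rho}\Bigr\|_{L^1(\rho)} \le C(M)\,\|f_1-f_2\|_\infty,
\end{equation*}
with $C(M) = O(e^{4M})$, which is the desired local Lipschitz estimate.

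There is no real obstacle here; the only point of care is that both candidate remainders have to be controlled in the sup norm on $\Xx$ (which is compact by {\sf (A4)}) so that the finite-measure integrals in the denominators stay uniformly bounded away from zero, and this is automatic once we restrict to a bounded ball in $\Cc(\Xx)$. The assumption that $\rho$ has bounded density and compact support is not actually needed for this lemma beyond being a finite Borel measure on $\Xx$; it will matter later (e.g.~to pass from $\nabla S[f]\in \Pp(\Xx)$ to the velocity field $\nabla F'_\mu$ for Theorem~\ref{th:bias_relu}).
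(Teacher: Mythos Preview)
Your proof is correct and follows essentially the same route as the paper: first establish Fr\'echet differentiability of $\Phi(f)=\int_\Xx e^{-f}\d\rho$ via a second-order Taylor remainder, then compose with $-\log$, and finally deduce the local Lipschitz property of $f\mapsto e^{-f}/\Phi(f)$ from the Lipschitz continuity of $\exp$ and the lower bound $\Phi(f)\ge e^{-M}$ on bounded sets. Your write-up is in fact more explicit than the paper's (you give the exact remainder bound and the two-term splitting of $h_{f_1}-h_{f_2}$), but the argument is the same.
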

\begin{proof}
Let $\ell=\exp$. Since $\ell'$ is locally Lipschitz continuous, for any $f,\tilde f\in \Cc(\Xx)$ there exists $L>0$ such that $\vert \ell(-(f(x)+\epsilon \tilde f(x))) - \ell(-f(x)) + \epsilon \ell'(-f(x))\tilde f(x) \vert \leq (L^2/2)\epsilon^2\Vert \tilde f\Vert_{\infty}$. Thus when $\epsilon \to 0$, we have $\sup_{x\in \Xx}\vert \epsilon^{-1}(  \ell(-(f(x)+\epsilon \tilde f(x))) - \ell(-f(x))) +\ell'(-f(x))\tilde f(x)\vert\to 0$ which shows that $f\mapsto \int_\Xx \exp(-f)\d\rho$ is Fr\'echet differentiable with gradient $-\exp(-f)\d\rho \in \Mm(\Xx)$. The differentiability of $S$ follows by composition with $-\log$. The Lipschitz continuity of $\d\nabla S/\d\rho$ on bounded sets is a consequence of the Lipschitz continuity of $\exp$ and the existence of positive lower bounds for $\exp$ on bounded intervals.
\end{proof}

\begin{lemma}[Convergence of soft-argmin]\label{lem:argminrelu}
Let $\rho\in \Pp(\RR^d)$ have a compact support $\Xx$. Let $\beta_t >0$ and $f_t\in \Cc(\Xx)$ be sequences such that $\beta_t \to \infty$ and $f_t  \to f_\infty \in \Cc(\Xx)$ (uniformly) as $t\to \infty$. If $\lambda_t = \nabla S [\beta_t f_t]$ converges weakly to some $\lambda \in \Pp(\Xx)$, then $\spt(\lambda)\subset  \arg\min_{x\in \Xx} f_\infty(x)$.
\end{lemma}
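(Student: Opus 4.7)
The plan is to apply the Laplace method: as $\beta_t \to \infty$, the measure $\lambda_t$ should concentrate on the set $M := \arg\min_{x \in \Xx} f_\infty(x)$, which is nonempty and compact by continuity of $f_\infty$ on the compact set $\Xx$. Let $m := \min_{\Xx} f_\infty$. The key structural fact I will use is that $\Xx = \spt(\rho)$, so any open neighborhood of a point of $M$ has positive $\rho$-mass; this prevents the normalization in the explicit formula for $\lambda_t$ given by Lemma~\ref{lem:reluloss} from decaying too fast.

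The main step is to show that $\lambda_t(U) \to 0$ for every open $U \subset \Xx$ with $\bar U \cap M = \emptyset$. For such $U$, compactness of $\bar U$ together with continuity of $f_\infty$ yields $\delta > 0$ with $f_\infty \geq m + 2\delta$ on $\bar U$. By the uniform convergence $f_t \to f_\infty$, for $t$ large enough we have $f_t \geq m + \delta$ on $\bar U$. Conversely, pick any $x^\star \in M$ and, by continuity, an open neighborhood $W$ of $x^\star$ on which $f_\infty < m + \delta/4$; since $x^\star \in \spt(\rho)$ we have $\rho(W) > 0$, and for $t$ large $f_t \leq m + \delta/2$ on $W$. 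Using the explicit form of $\lambda_t$ from Lemma~\ref{lem:reluloss}, this gives
\[
\lambda_t(U) \;\leq\; \frac{\int_U \exp(-\beta_t f_t)\,\d\rho}{\int_W \exp(-\beta_t f_t)\,\d\rho} \;\leq\; \frac{\rho(U)\exp(-\beta_t(m+\delta))}{\rho(W)\exp(-\beta_t(m+\delta/2))} \;=\; \frac{\rho(U)}{\rho(W)}\,\exp(-\beta_t \delta/2) \;\to\; 0.
\]

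To conclude, I invoke the Portmanteau theorem: the weak convergence $\lambda_t \weakto \lambda$ implies $\lambda(U) \leq \liminf_t \lambda_t(U) = 0$ for every such open $U$. Writing the open set $\Xx \setminus M$ as the countable union $\bigcup_{k \geq 1} U_k$ with $U_k := \{x \in \Xx : \dist(x, M) > 1/k\}$ (each $U_k$ having $\bar U_k \cap M = \emptyset$), we obtain $\lambda(\Xx \setminus M) = 0$, hence $\spt(\lambda) \subset M$, which is the claim. There is no real obstacle here: the only subtlety is making sure that the denominator in $\lambda_t$ is not vanishingly small, which is exactly what the assumption $\Xx = \spt(\rho)$ guarantees.
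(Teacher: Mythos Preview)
Your proof is correct and follows essentially the same Laplace-type argument as the paper: both compare the numerator on a set bounded away from the minimum with the denominator restricted to a neighborhood of a minimizer, and use $\beta_t\to\infty$ to kill the ratio. The only cosmetic difference is that the paper argues pointwise (for each $x_0\notin M$ it finds a ball $B_\delta(x_0)$ with $\lambda_t(B_\delta(x_0))\to 0$, hence $x_0\notin\spt(\lambda)$), whereas you argue globally via a countable cover of $\Xx\setminus M$ and make the Portmanteau step explicit.
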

\begin{proof}
Let $x^*, x_0 \in \Xx$ be such that $x^*\in \arg\min_{x\in \Xx} f_\infty$ and $x_0 \notin \arg\min f_\infty$. By continuity of $f_\infty$ and uniform convergence, there exists $\epsilon, \delta, t_0>0$ such that $\forall t\geq t_0$, it holds $f_t(x)\geq f(x^*)+\epsilon$ for all $x\in B_\delta(x_0)$ and $f_t(x)\leq f(x^*)+\epsilon/2$ for all $x\in B_\delta(x^*)$, where $B_\delta(x)$ is the open ball of radius $\delta$ centered at $x$. It follows
$$
\lambda_t(B_\delta(x_0)) = \frac{\int_{B_\delta(x_0)}\exp(-\beta_t f_t(x))\d\rho(x) }{\int_\Xx \exp(-\beta_t f_t(x)) \d\rho(x)} \leq \frac{\exp(-\beta (f(x^*)+\epsilon))\rho(B_\delta(x_0))}{\exp(-\beta (f(x^*)+\epsilon/2))\rho(B_\delta(x^*))} \to 0.
$$
This shows that $x_0\notin \spt(\lambda)$. Since this is true for any $x_0\notin \arg\min f_\infty$, it follows that $\spt(\lambda)\subset \arg\min_{x\in \Xx} f_\infty(x)$.
\end{proof}

Now, we define the objective
$$
F(\mu) = S(\hat h(\mu))
$$
where $\hat h(\mu) \in \Cc(\Xx)$ is the function $x\mapsto y(x)h(\mu,x)$. Seing $F$ as a function on unnormalized measures, it admits a Fr\'echet differential at $\mu\in \Pp_2(\RR^p)$, represented by $F_\mu'\in \Cc(\RR^p)$ given by
$$
F'_\mu(w) = \int_{\Xx} y(x)\phi(w,x) \d(\nabla S[\hat h(\mu)])(x).
$$
Our next step is to gather some regularity properties of $F'_\mu$.
Let us consider the sets 
\begin{align}\label{eq:sets_relu}
D := \{(a,b)\in \RR^{p-1}\times \RR\;;\; \Vert a\Vert =\vert b\vert\}&&\text{and}&& \SS_\pm := \SS_{+}\cup \SS_{-} := \SS^{p-1}\cap D.
\end{align}
 We endow $\SS_{\pm}$ with its Riemannian geometry inherited from the sphere (it is a disconnected manifold with two connected components $\SS_+$ and $\SS_-$) and let us denote $J'_\mu$ the restriction of $F'_\mu$ to $\SS_{\pm}$. 
\begin{lemma}\label{lem:differentiability_relu}
Under Assumption~{\sf (A4)} for all $\mu \in \Pp_2(\RR^p)$, the function $F'_\mu$ is differentiable on $\{(a,b)\in \RR^p\;;\; a\neq 0\}$. Also let $A\subset \Pp_2(\RR^p)$ be a set of measures with uniformly bounded second moments. Then  $\nabla J_\mu'$ is Lipschitz continuous on $\SS_{\pm}$, uniformly on $A$ and the function $J':\Pp_2(\RR^p)\to \Cc^1(\SS_{\pm})$ is Lipschitz continuous on $A$.
\end{lemma}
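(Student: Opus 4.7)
The plan is to first recast $F_\mu'$ through the probability measure $\lambda_\mu := \nabla S[\hat h(\mu)] \in \Pp(\Xx)$, so that
\[
F'_\mu(a,b) = b \int_\Xx y(x)\,(a^\top(x,1))_+\,\d\lambda_\mu(x).
\]
A key preliminary I would establish up front is that the density of $\lambda_\mu$ with respect to Lebesgue measure is uniformly bounded on $A$. Indeed, the quadratic bound $|\phi(w,x)|\leq (R+1)\|w\|^2/2$ combined with the uniform second-moment bound on $A$ gives a uniform sup-norm bound on $\hat h(\mu)$; then Lemma~\ref{lem:reluloss} yields a uniform bound on $\d\lambda_\mu/\d\rho$, and assumption~{\sf (A4)} transfers this to a uniform bound $\d\lambda_\mu/\d x \leq \kappa$ on $\Xx$.

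With this bound in hand, differentiability of $F'_\mu$ at any $(a,b)$ with $a\neq 0$ follows from dominated convergence: the hyperplane $\{x:a^\top(x,1)=0\}$ has zero Lebesgue measure, hence zero $\lambda_\mu$-mass, so the integrand is a.e.\ differentiable in $(a,b)$ with locally bounded partials. This produces the formulas $\nabla_a F'_\mu(a,b)=b\int y(x)\ones\{a^\top(x,1)>0\}(x,1)\,\d\lambda_\mu$ and $\partial_b F'_\mu(a,b)=\int y(x)(a^\top(x,1))_+\,\d\lambda_\mu$. On $\SS_\pm$ one has $\|a\|=|b|=1/\sqrt{2}$, so these formulas hold throughout. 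To get Lipschitz continuity of the Euclidean gradient uniformly on $A$, the easy $b$-partial is $O(R+1)$-Lipschitz because $(\cdot)_+$ is $1$-Lipschitz; the $a$-partial requires a thin-slab estimate
\[
\int_\Xx \bigl|\ones\{a_1^\top(x,1)>0\}-\ones\{a_2^\top(x,1)>0\}\bigr|\d\lambda_\mu(x) \leq \kappa\,\vol(\Xx\cap S_{a_1,a_2}) \leq C(\Xx)\kappa\,\|a_1-a_2\|,
\]
where $S_{a_1,a_2}$ is the symmetric difference of the two half-spaces; the inner Lebesgue bound is a direct geometric computation on the bounded domain $\Xx$, using that $\|a_i\|=1/\sqrt{2}$ is bounded below. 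The Riemannian gradient $\nabla J'_\mu$ is obtained by projecting onto the tangent bundle of the smooth compact submanifold $\SS_\pm$, a smooth operation that preserves Lipschitz continuity.

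For the last claim, I would estimate $J'_{\mu_1}-J'_{\mu_2}$ by integrating $y(x)\phi(\theta,x)$ against $\d\lambda_{\mu_1}/\d\rho - \d\lambda_{\mu_2}/\d\rho$. Lemma~\ref{lem:reluloss} bounds this difference in $L^1(\rho)$ by $\|\hat h(\mu_1)-\hat h(\mu_2)\|_\infty$, on the uniformly bounded set $\{\hat h(\mu):\mu\in A\}\subset\Cc(\Xx)$. A Cauchy--Schwarz argument against any transport plan, using $|\phi(w_1,x)-\phi(w_2,x)|\leq (R+1)(\|w_1\|+\|w_2\|)\|w_1-w_2\|$ and the uniform second-moment bound on $A$, yields $\|\hat h(\mu_1)-\hat h(\mu_2)\|_\infty\lesssim W_2(\mu_1,\mu_2)$. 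Since $\phi(\theta,\cdot)$ and its tangential $\theta$-derivative are uniformly bounded on $\SS_\pm\times\Xx$, repeating the same estimate for the gradient gives the $\Cc^1$ bound.

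The main obstacle is the thin-slab estimate, which is precisely where assumption~{\sf (A4)} becomes essential: bounded density of $\rho$ converts the \emph{geometric} smallness of the symmetric difference of two half-spaces into smallness of $\lambda_\mu$-mass, uniformly over $A$. Without this regularity (e.g.,\ for the empirical distribution of the training set, as in the rest of the paper), the indicator $\ones\{a^\top(x,1)>0\}$ would only yield an $L^1$ representative for the gradient and no pointwise Lipschitz estimate on $\SS_\pm$, which is why the population objective~\eqref{eq:objectiverelu} is adopted for the ReLU case.
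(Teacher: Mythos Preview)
Your proposal is correct and follows essentially the same approach as the paper: differentiability via dominated convergence using $\rho(H_a)=0$, the thin-slab estimate $\lambda_\mu(H_{a,\bar a})\leq L\,\dist(a,\bar a)$ from the bounded Lebesgue density of $\lambda_\mu$ (uniform on $A$ by the second-moment bound), and the $\Cc^1$-Lipschitz dependence on $\mu$ via Lemma~\ref{lem:reluloss} combined with Lipschitz continuity of $\hat h$. Your version is slightly more explicit in front-loading the uniform density bound on $\lambda_\mu$ and in spelling out the $W_2$-Lipschitz estimate for $\hat h$ via a transport plan, but the argument is the paper's.
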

\begin{proof}
Let us denote $\lambda_\mu = \nabla S[\hat h(\mu)]$, let  $w,\bar w\in \RR^{p}$ where $w=(a,b)$ with $a\neq 0$ and $V\subset \RR^{p}$ be a convex bounded open set that contains $w$ and $\bar w$. Since the function $w\mapsto y(x)\phi(w,x)$ is Lipschitz on $V$, uniformly in $x\in \Xx$, there exists a constant $L>0$ such that $\epsilon^{-1}\vert \phi(w+\epsilon\bar w,x)-\phi(w,\epsilon)\vert \leq L$ for $\epsilon$ small enough. Moreover, if $x\notin H_a:= \{x\in \Xx\;;\; a^\top x=0\}$, then $\epsilon^{-1}(\phi(w+\epsilon\bar w,x)-\phi(w,\epsilon)) \to  \nabla_w \phi(w,x)^\top \bar w$ as $\epsilon\to 0$. Since $\rho(H_a)=0$ it follows by the dominated convergence theorem that
$$
\lim_{\epsilon\to 0} \int_\Xx \vert \epsilon^{-1}y(x)(\phi(w+\epsilon\bar w,x)-\phi(w,\epsilon)) - y(x)\nabla_w \phi(w,x)^\top \bar w\vert \d\lambda_\mu(x) =0,
$$
which shows that $F'_\mu$ is differentiable with the gradient $w\mapsto \int_\Xx y(x)\nabla_w \phi(w,x)\d\lambda_\mu(x)$ if $a\neq 0$.
 
Let us now show that $\nabla F'_\mu$ is Lipschitz continuous on $\SS_{\pm}$, which implies the same for $\nabla J'_\mu$. Since this is immediate for the component $\nabla_b$ so let us focus on  $\nabla_a$. 
For $(a,b),(\bar a,b) \in \SS_+$, defining $H_{a,\bar a} = \{ x\in \Xx\;;\; (a^\top x)_+^0 \neq (\bar a^\top x)_+^0\}$, we have
$$\Vert \nabla_a F'_{\mu}(a,b) - \nabla_a F'_{\mu}(\bar a,b)\Vert \leq 2^{-1/2} \sup_{x\in \Xx}\Vert (x,1)\Vert\lambda_\mu(H_{a,\bar a}).$$
But since the Lebesgue measure of $ \{x \in \Xx \;;\; (x,1)\in H_{a,\bar a}\}$ is bounded by a constant times $\dist(a,\bar a)$ and $\lambda_\mu$ has a bounded density with respect to the Lebesgue measure, it follows that for some constant $L$, $\lambda_\mu(H_{a,\bar a}) \leq L\dist(a,\bar a)$. Moreover this constant $L$ is uniform over measures $\lambda_\mu$ with a bounded density with respect to $\rho$.  But if $A\subset \Pp_2(\RR^p)$ is such that for some $C>0$, $\int \Vert w\Vert_2^2\d\mu <C$ for all $\mu\in A$, then $\{ \lambda_\mu \; ;\; \mu \in A\}\subset \Pp(\Xx)$ is a set of measures with uniformly bounded densities with respect to $\rho$, so $L$ is uniform over $A$.

For the last claim, notice that $\hat h:\Pp_2(\RR^p)\to \Cc(\Xx)$ is Lipschitz continuous and that $(\d\nabla S/\d\rho):\Cc(\Xx)\to L^1(\rho)$ is Lipschitz continuous, by Lemma~\ref{lem:reluloss}. It follows that $\mu\mapsto (\d \lambda_\mu/\d\rho)$ is Lipschitz continuous as a function $\Pp_2(\RR^p)\to L^1(\rho)$. 
Finally, $M:= \sup_{w\in \SS_\pm} \sup_{x\in \Xx} \{\vert \phi(w,x)\vert, \Vert \nabla \phi(w,x)\Vert\}$ is finite (here $\sup_{x\in \Xx}$ is an essential supremum w.r.t.~$\rho$), so it follows that $J':\Pp_2(\RR^p)\to \Cc^1(\SS_{\pm})$ is Lipschitz continuous because $\Vert J'_\mu - J'_{\tilde \mu}\Vert_{\Cc^1} \leq M \Vert (\d \lambda_\mu/\d\rho) - (\d \lambda_{\tilde \mu}/\d\rho) \Vert_{L^1(\rho)}$.
\end{proof}

\paragraph{Existence of a Wasserstein gradient flow.} After these preliminaries, we are in position to show the existence of Wasserstein gradient flows for certain initializations.  In this section, we will not attempt to prove uniqueness of the dynamics.
\begin{lemma}\label{lem:existence_relu}
Under Assumption~{\sf (A4)}, let $\mu_0\in \Pp_2(\RR^{d+2})$ satisfying $\spt(\mu_0)\subset D$. Then there exists a well-defined Wasserstein gradient flow of $F$ starting from $\mu_0$. It satisfies $\spt(\mu_t)\subset D$ for $t\geq 0$.
\end{lemma}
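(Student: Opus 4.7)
The plan is to construct the flow by a fixed-point argument in $C([0,T],\Pp_2(\RR^p))$ after first checking that the constraint defining $D$ is preserved by any classical flow of the vector field $w \mapsto \nabla F'_{\mu_t}(w)$ on $D \setminus \{0\}$. For a trajectory $w(t) = (a(t),b(t))$, the expressions of the gradient obtained in the proof of Lemma~\ref{lem:differentiability_relu} read
$$\nabla_a F'_\mu(a,b) = b \int_\Xx y(x)\,(a^\top (x,1))_+^0 \, (x,1) \, d\lambda_\mu(x), \qquad \nabla_b F'_\mu(a,b) = \int_\Xx y(x)\,(a^\top(x,1))_+ \, d\lambda_\mu(x),$$
where $\lambda_\mu := \nabla S[\hat h(\mu)]$ and $(u)_+^0 = \mathbf{1}_{u>0}$. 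A direct computation then gives, for $a(t)\neq 0$,
$$\tfrac{d}{dt}\bigl(\|a(t)\|^2 - b(t)^2\bigr) = 2 b \!\int_\Xx\! y(x)\Bigl[(a^\top(x,1))(a^\top(x,1))_+^0 - (a^\top(x,1))_+\Bigr] d\lambda_\mu(x) = 0,$$
using the identity $u\,(u)_+^0 = u_+$ and the fact that $\lambda_\mu \ll \rho$ assigns zero mass to the hyperplane $\{a^\top(x,1)=0\}$. Hence the manifold $D$ (and in particular its radial slices $\SS_\pm$ up to rescaling) is invariant under any such flow, and since $\nabla F'_\mu(0) = 0$ by $2$-homogeneity the origin is a stationary point.

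For existence, I would mimic the Banach fixed-point construction used in Appendix~\ref{app:meanfield} (following~\citet{ambrosio2008gradient} and Theorem~2.6 of~\cite{chizat2018global}). Given an a priori bound $M$ on second moments on $[0,T]$, Lemma~\ref{lem:differentiability_relu} supplies (i) a Lipschitz constant for $\nabla J'_\mu$ on $\SS_{\pm}$ that is uniform over the bounded set $A = \{\mu \in \Pp_2 : \int \|w\|^2 d\mu \le M\}$, and (ii) Lipschitz dependence of $J'_\mu$ on $\mu$. By 2-homogeneity, $F'_\mu(rw) = r^2 J'_\mu(w/\|w\|)$ for $w$ in the invariant cone $D$, so (i) and (ii) upgrade to local Lipschitz estimates for $\nabla F'_\mu$ on bounded subsets of $D \setminus \{0\}$ with constants uniform in $\mu \in A$. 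For any continuous curve $(\tilde\mu_t)_{t\in[0,T]}$ in $A$ supported in $D$, the non-autonomous field $w \mapsto \nabla F'_{\tilde\mu_t}(w)$ admits a classical flow $X^{\tilde\mu}$ on $D$ (trajectories that start at $0$ stay at $0$; trajectories starting in $D\setminus\{0\}$ stay in $D\setminus\{0\}$ on $[0,T]$ by the invariance above plus a Gr\"onwall-type lower bound on the radial component, since $\|a\|=|b|$ is preserved). The assignment $\Gamma(\tilde\mu_t) := (X^{\tilde\mu}_t)_\# \mu_0$ maps $C([0,T],A_D)$ to itself (with $A_D$ the measures in $A$ supported in $D$) and is contracting for $T$ small enough by the combined Lipschitz estimates. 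Its fixed point is a Wasserstein gradient flow of $F$ starting from $\mu_0$, and a standard continuation argument using that $F$ is nondecreasing (so the second moment grows at most linearly-exponentially) extends this to all $t\ge 0$.

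The main obstacle is the loss of differentiability of $F'_\mu$ on the hyperplane $\{a=0\}$: naive application of Cauchy--Lipschitz fails there. The resolution is precisely the invariance of $D$, which sends this pathological set into the single fixed point $w=0$. Concretely, because $\|a(t)\|^2 - b(t)^2$ is preserved and equals $0$ on $D$, a trajectory has $a(t)=0$ at some time if and only if $b(t)=0$ at the same time, and then $w(t)=0$ everywhere by uniqueness on the smooth locus; so the flow on $D\setminus\{0\}$ never reaches $\{a=0\}\setminus\{0\}$. The second technical point, handled by the a priori bound on second moments, is that the Lipschitz estimate in Lemma~\ref{lem:differentiability_relu} is only uniform on bounded subsets of $\Pp_2$; this is standard once one observes $\frac{d}{dt}\int\|w\|^2 d\mu_t = 2\int w\cdot\nabla F'_{\mu_t}(w)\,d\mu_t = 4 F(\mu_t) \le 4 F(\mu_0)$ (by 2-homogeneity and monotonicity of $F$ along the flow), which bounds the second moment on any $[0,T]$.
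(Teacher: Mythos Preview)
Your invariance computation for $D$ is correct and essentially identical to the paper's. The overall strategy, however, is genuinely different. The paper does \emph{not} run a Banach fixed-point argument; instead it (i) approximates $\mu_0$ by empirical measures $\mu_{0,m}$ supported in a compact $K\subset D\setminus\{0\}$, (ii) for each $m$ invokes Peano's existence theorem (not Cauchy--Lipschitz) together with the invariance of $D$ and Gr\"onwall bounds to obtain a global-in-time discrete flow $\mu_{t,m}$, and (iii) extracts a weak limit curve by compactness and checks it solves the gradient-flow equation, following the template of \citet[Thm.~2.6]{chizat2018global}. The paper explicitly refrains from claiming uniqueness. Your Picard-type construction, if carried out, would give uniqueness for free---a genuine strengthening---and it does appear feasible: the Lipschitz estimates of Lemma~\ref{lem:differentiability_relu} together with $1$-homogeneity of $\nabla F'_\mu$ make the field globally Lipschitz on each cone of $D$ through the origin (and across cones, since points in $\SS_+$ and $\SS_-$ are uniformly separated), uniformly over $\mu$ in a $\Pp_2$-bounded set.

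There is, however, a concrete error in your a~priori second-moment bound. The identity $\int w\cdot\nabla F'_{\mu_t}(w)\,d\mu_t = 2F(\mu_t)$ is false: by Euler's relation the left-hand side equals $2\int F'_{\mu_t}(w)\,d\mu_t(w) = 2\int_\Xx \hat h(\mu_t)(x)\,d\lambda_{\mu_t}(x)$, the Gibbs average of $\hat h(\mu_t)$ under $\lambda_{\mu_t}$, which is not $F(\mu_t)=-\log\int e^{-\hat h(\mu_t)}d\rho$ (that would require $F$ to be linear in $\mu$). In addition, since the flow is \emph{ascending}, monotonicity gives $F(\mu_t)\ge F(\mu_0)$, not $\le$, so even if the identity held your inequality would point the wrong way. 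The repair is straightforward: because $\lambda_\mu\in\Pp(\Xx)$ and $|\phi(\theta,x)|$ is bounded on $\SS_\pm\times\Xx$, one has $|F'_\mu(w)|\le C\|w\|^2$ with $C$ independent of $\mu$, whence $\tfrac{d}{dt}\int\|w\|^2 d\mu_t \le 4C\int\|w\|^2 d\mu_t$ and Gr\"onwall yields the local-in-time bound you need for the continuation argument. With this correction your route is sound.
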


\begin{proof}
We will essentially follow the proof of~\cite[Thm. 2.6]{chizat2018global}, by highlighting the points that need to be adapted. Without loss of generality (the trivial case $\mu_0 =\delta_{0}$ put aside), we assume that $\spt(\mu_0)$ is included in a compact set $K\subset D$ that does not contain $0$, since the general case can be treated as in Theorem~\ref{th:WGF} by exploiting the $2$-homogeneity of $F'_{\mu}$ for all $\mu \in \Pp_2(\RR^p)$.

Let $\mu_{0,m}$ be a sequence of empirical measures concentrated in $K$ that converges weakly to $\mu_0$. For all $m$, the Wasserstein gradient flow equation Eq.~\eqref{eq:flowdef} boils down to an ordinary differential equation which existence is guaranteed by Peano existence theorem and Lemma~\ref{lem:differentiability_relu}, at least on some maximal interval $[0,t_0[$. If $t_0<\infty$, the growth of $\nabla F'_{\mu_{m,t}}$ for $t\in [0,t_0[$ is such that Gr\"onwall type arguments guarantee that the atoms of $\mu_{m,t}$ are uniformly bounded (also bounded away from $0$) and converge as $t\to t_0$, thus defining a measure $\mu_{m,t_0}$. We denote $X(t,w) = (A_t(w),B_t(w))\in \RR^{p-1}\times \RR$ where $X$ is the flow from Eq.~\eqref{eq:flowdef}. It holds, denoting $\tilde x = (x,1)$ and expanding Eq.~\eqref{eq:flowdef}:
\begin{align*}
\frac{\d}{\d t} \frac12 \Vert A_t(w)\Vert^2 &= A_t(w)^\top \int B_t(w) (A_t(w)^\top \tilde x)_+^0 \tilde x \d[\nabla S(\hat h(\mu_{t,m}))](x),\\
\frac{\d}{\d t} \frac12 \vert B_t(w)\vert^2 &= B_t(w) \int  (A_t(w)^\top \tilde x)_+ \d[\nabla S(\hat h(\mu_{t,m}))](x).
\end{align*}

\begin{wrapfigure}{r}{0.52\textwidth}
  \vspace{-0.8cm}
  \begin{center}
    \includegraphics[width=0.53\textwidth]{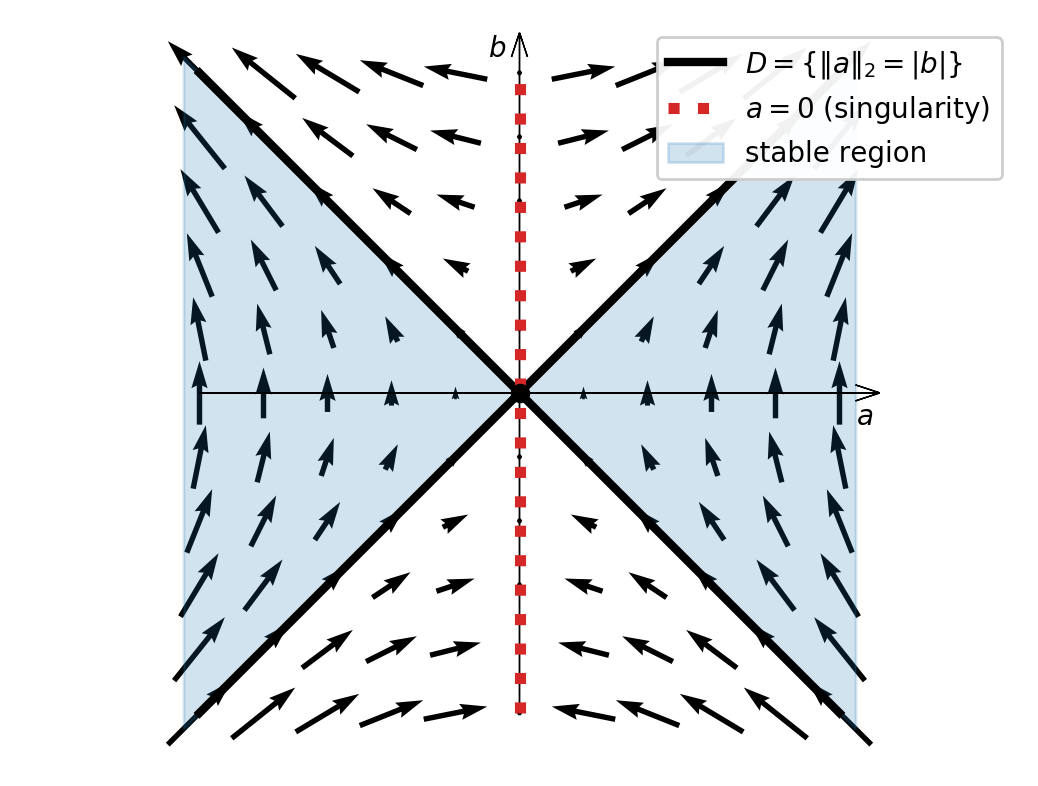}
  \end{center}
  \vspace{-1.1cm}
  \caption{ $2$-d slice of the vector field $\nabla F'_{\mu_t}$ in the ReLU case (illustration). Initializing on the invariant set $D$ allows to avoid $a=0$. }\label{fig:invariantset}
  \vspace{-0.5cm}
\end{wrapfigure}

These quantities being equal, this shows that $\forall w\in \RR^p$, $\Vert A_t(w)\Vert_2^2 - \vert b_t(w)\vert^2$ is constant on $[0,t_0[$, as long as $A_t(w)\neq 0$ on this interval. It follows that whenever $w \in D$, we have $\Vert A_t(w)\Vert =\vert B_t(w)\vert$ for all $t\in [0,t_0[$. Since $\mu_{t,m} = (X_t)_\# \mu_{0,m}$, we have that if $\spt(\mu_{m,0})\subset D$ then $\spt(\mu_{m,t}) \subset D$ for all $t\in [0,t_0[$ and since $D$ is closed, we have $\spt(\mu_{t_0,m})\subset D$ which contradicts the maximality of $t_0$ (the trajectory could be extended by Peano existence theorem). Thus $t_0=\infty$. The principle behind the initialization on $D$ is illustrated on Figure~\ref{fig:invariantset} and is related to well-known properties of homogeneous models~\citep{du2018algorithmic}.

So far, we have proved that for all $m$, there exists a Wasserstein gradient flow $\mu_{t,m}$ starting from $\mu_{0,m}$, defined on $[0,\infty[$ and such that $\spt(\mu_{t,m})\subset D$ for $t\geq 0$. The rest of the proof follows that in~\cite[Thm. 2.6]{chizat2018global} where we extract a (weak) limit curve $\mu_{t}$ by compactness and show that it satisfies the Wasserstein gradient flow equation Eq.~\eqref{eq:flowdef}. The only technical point is Step.(iii) in that proof, which is taken care of by the last claim of Lemma~\ref{lem:differentiability_relu}. Note that here we do not prove uniqueness of the Wasserstein gradient flow, only its existence.
\end{proof}

\paragraph{Implicit bias for ReLU networks.}
Let us restate Theorem~\ref{th:bias_relu} in a slightly more general form and making Assumption~{(*)} explicit. We recall that $\nu_t := \Pi_2(\mu_t)$ and $\bar \nu_t := \nu_t/(\nu_t(\SS^{p-1}))$.

\begin{theorem}
Let $\mu_0 \in \Pp_2(\RR^p)$ be such that $\spt(\Pi_2(\mu_0)) =  \SS_{\pm}$ and assume~{\sf (A4)}. Then there exists $(\mu_t)_{t\geq 0}$ a Wasserstein gradient flow of the objective Eq.~\eqref{eq:objectiverelu} with initialization $\mu_0$. Moreover, 
\begin{itemize}
\item if $\bar \nu_t = \Pi_2(\mu_t)/([\Pi_2(\mu_t)](\SS^{p-1}))$ converges weakly to some $\bar \nu_\infty \in \Pp(\SS^{p-1})$,
\item if $\nabla S[\hat h(\mu_t)] $ converges weakly to some $\lambda_\infty  \in \Pp(\Xx)$, and
\item (*) if $J'_{\mu_t}$ converges in $\mathcal{C}^1(\SS_{\pm})$ to some $J'_\infty$ that satisfies the Morse-Sard property: the set of critical values of $J'_\infty$ (i.e.,~$v\in \RR$ such that there exists $\theta \in \SS_{\pm}$ such that $J'_\infty(\theta)=v$ and $\nabla J'_\infty(\theta)=0$) has Lebesgue measure zero,
\end{itemize}
then $h(\bar \nu_{\infty},\cdot)$ is a maximizer for the $\mathcal{F}_1$-max-margin problem 
$
\max_{\Vert f\Vert_{\Ff_1}\leq 1} \min_{x\in \spt(\rho)} y(x)f(x).
$
\end{theorem}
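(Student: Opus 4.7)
The plan is to mirror the four-step proof of Theorem~\ref{th:biasWGF}, replacing the ambient sphere $\SS^{p-1}$ by the submanifold $\SS_{\pm}$ from \eqref{eq:sets_relu}, which is the natural state space once the dynamics is initialized on the invariant set $D$. Existence of a Wasserstein gradient flow $(\mu_t)_{t\geq 0}$ starting from $\mu_0$ is given by Lemma~\ref{lem:existence_relu}, and because $\spt(\mu_0)\subset D$ the flow satisfies $\spt(\nu_t)\subset \SS_{\pm}$ for all $t\geq 0$. By Proposition~\ref{prop:equivnorms}, the $\mathcal{F}_1$-max-margin problem $\max_{\|f\|_{\mathcal{F}_1}\leq 1} \min_{x\in\spt\rho} y(x)f(x)$ is equivalent (up to a factor $2$) to maximizing $\min_{x\in\spt\rho} y(x)\int\phi(\theta,x)\d\nu(\theta)$ over $\nu\in\Pp(\SS_{\pm})$; by minimax duality (exactly as in Proposition~\ref{prop:optimality}) a pair $(\bar\nu^\star,\lambda^\star)\in\Pp(\SS_{\pm})\times\Pp(\Xx)$ is optimal iff (i) $\spt\bar\nu^\star\subset \arg\max_{\theta\in\SS_{\pm}}\int y(x)\phi(\theta,x)\d\lambda^\star(x)$ and (ii) $\spt\lambda^\star\subset \arg\min_{x\in\spt\rho} y(x)h(\bar\nu^\star,x)$. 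The goal is to establish both conditions for $(\bar\nu_\infty,\lambda_\infty)$, where $\lambda_\infty$ is the weak limit of $\nabla S[\hat h(\mu_t)]$.

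For \emph{Step~1 (mass grows unbounded)}, set $M=\max_{\SS_{\pm}}J'_\infty$, which is positive by linear independence of the ReLU features (or directly assumed nontrivial). Using assumption~(*), regular values of $J'_\infty$ are dense, so one can pick $v\in(0,M/8)$ such that $M-v$ is a regular value; the super-level set $K_v=(J'_\infty)^{-1}([M-v,M])$ is then a smooth submanifold with boundary on which $\nabla J'_\infty$ points strictly inward. Since $J'_{\nu_t}\to J'_\infty$ in $\Cc^1(\SS_{\pm})$ by assumption, for $t\geq t_0$ the same inward-pointing property persists for $J'_{\nu_t}$, making $K_v$ positively invariant under the spherical WFR dynamics~\eqref{eq:projectedWGF}. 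The reaction term then yields $\frac{d}{dt}\nu_t(K_v)\geq 4(M-2v)\nu_t(K_v)$, and because $\spt(\Pi_2(\mu_0))=\SS_{\pm}$ propagates via the flow to $\spt(\nu_{t_0})=\SS_{\pm}$, one has $\nu_{t_0}(K_v)>0$ and hence $\beta(t):=\nu_t(\SS_{\pm})\to\infty$.

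For \emph{Step~2 (slackness on data)}, use the key identity $\hat h(\mu_t) = \beta(t)\,\hat h(\bar\nu_t)$ coming from $2$-homogeneity of $\phi$ and linearity of $h$ in $\nu$. Since $\bar\nu_t\to\bar\nu_\infty$ weakly in $\Pp(\SS_{\pm})$ with $\phi(\cdot,\cdot)$ bounded and jointly continuous on $\SS_{\pm}\times\Xx$, the function $\hat h(\bar\nu_t)$ converges uniformly on $\Xx$ to $\hat h(\bar\nu_\infty)$, and $\lambda_t=\nabla S[\beta(t)\hat h(\bar\nu_t)]$. Applying Lemma~\ref{lem:argminrelu} (with $f_t=-\hat h(\bar\nu_t)$) immediately yields $\spt\lambda_\infty\subset\arg\min_{x\in\spt\rho}y(x)h(\bar\nu_\infty,x)$, which is condition~(ii). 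For \emph{Step~3 (slackness in parameter space)}, choose an additional regular value $M-4v'$ with $v'\in(0,v)$ (again dense by~(*)); by the same invariance argument no mass enters $\SS_{\pm}\setminus K_{4v'}$ after time $t_0$, so $\nu_t(\SS_{\pm}\setminus K_{4v'})$ grows at rate at most $4(M-3v')$ while $\nu_t(K_{v'})$ grows at rate at least $4(M-2v')$. Dividing by $\beta(t)$ and using the Portmanteau theorem on the open set $\SS_{\pm}\setminus K_{4v'}$ gives $\bar\nu_\infty(\SS_{\pm}\setminus K_{4v'})=0$, and sending $v'\to 0$ along regular values gives $\spt\bar\nu_\infty\subset(J'_\infty)^{-1}(M)=\arg\max_{\SS_{\pm}}J'_\infty$, which is condition~(i). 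Combining (i)--(ii) gives optimality of $(\bar\nu_\infty,\lambda_\infty)$.

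\paragraph{Main obstacles.} The principal difficulties are all regularity-related. Non-differentiability of $\phi$ is handled on two fronts: initializing on $D$ keeps $a\neq 0$ along the flow, while smoothing against the absolutely continuous input measure $\rho$ (Assumption~{\sf (A4)}) turns the expected loss gradient $J'_\mu$ into a $C^{1,1}$ function on $\SS_{\pm}$ (Lemma~\ref{lem:differentiability_relu}). The remaining gap is that classical Morse--Sard requires $C^{d+1}$ regularity on a $(d+1)$-dimensional manifold, which is just short of what the $C^{1,1}$ regularity delivers; this is precisely why assumption~(*) must be imposed rather than derived, and removing it is the main open technical question. A minor point is that the soft-argmin lemma must be applied in its continuous form (Lemma~\ref{lem:argminrelu}) rather than the discrete version used in the $n$-point proof, but this is an immediate adaptation once $\beta(t)\to\infty$ is established in Step~1.
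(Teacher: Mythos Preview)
Your proposal follows the same four-step template as the paper's proof: existence via Lemma~\ref{lem:existence_relu}, then Steps~1--3 of Theorem~\ref{th:biasWGF} transported to $\SS_{\pm}$, with Lemma~\ref{lem:argminrelu} replacing Lemma~\ref{lem:argmax} in Step~2, and conclusion via Proposition~\ref{prop:optimalityrelu} together with Proposition~\ref{prop:equivnorms}.

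The one substantive gap is your justification of $M>0$. You invoke ``linear independence of the ReLU features'', but there is no analogue of {\sf (A3)} in this continuous-input setting, and $M>0$ is not assumed. The paper's argument is: $\lambda_\infty\in\Pp(\Xx)$, so $y(\cdot)\lambda_\infty$ is a nonzero signed measure on $\Xx$; by the contrapositive of Lemma~\ref{lem:relu_injectivity} this forces $J'_\infty\not\equiv 0$ on $\SS_+$, and since $J'_\infty$ takes opposite values on $\SS_+$ and $\SS_-$ (the feature $\phi$ being balanced via $b\mapsto -b$) one gets $M=\max_{\SS_\pm}J'_\infty>0$. That injectivity lemma --- any signed measure $\lambda$ on a compact $\Xx$ with $\int(a^\top(x,1))_+\d\lambda(x)=0$ for all $a\in\SS^{d}$ must vanish --- is the continuous substitute for {\sf (A3)}, and is proved using density of the associated random-feature RKHS in $\Cc(S_\alpha)$. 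Without it, your Step~1 does not get off the ground. A minor slip: when applying Lemma~\ref{lem:argminrelu} you want $f_t=\hat h(\bar\nu_t)$, not $-\hat h(\bar\nu_t)$, since $\nabla S[\beta f]\propto e^{-\beta f}\rho$ concentrates on $\arg\min f$.
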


Before proving this theorem, let us discuss its assumptions.  First, the assumption on the initialization given here is satisfied by $\mu_0$ given in Theorem~\ref{th:bias_relu} (which is an example given for the sake of concreteness). The conditions in the two first bullets are similar to those of Theorem~\ref{th:biasWGF} and just require the uniqueness of limits of some sequences that live in compact spaces. In Assumption (*), the fact that the Morse-Sard property holds for $J'_\infty$ was already required in~\cite{chizat2018global} and it is an open question to guarantee that this property holds  in this context (where $J'_\infty$ is potentially an infinite sum of subanalytic functions, instead of a finite sum as in Theorem~\ref{th:bias_relu}). Finally, the most undesirable assumption is perhaps the convergence of $J'_{\mu_t}$ to $J'_\infty$ in $\Cc^1(\SS_{\pm})$. The fact that it converges in $\Cc(\SS_\pm)$ can be shown a priori, so the assumption is really on the uniform convergence of the gradient. In particular, it requires $J'_\infty$ to be continuously differentiable, which is for instance not true if $\lambda_\infty$ is a discrete measure.

\begin{proof}
Since $\spt(\mu_0) \subset D$, the existence of a Wasserstein gradient flow is proved in Lemma~\ref{lem:existence_relu}. Let $\lambda_t =\nabla S[h(\mu_t)] \in \Pp(\Xx)$ which, as assumed, converges weakly to $\lambda_\infty$.  Since $y(x)\lambda_\infty(\d x)$, seen as a signed measure on $\Xx$ is non-zero, it follows by the contraposition of Lemma~\ref{lem:relu_injectivity} that $J'_\infty$ is not identically $0$ on $\SS_+$. Moreover, since $J'_\infty$ takes opposite values on $\SS_+$ and $\SS_-$, we have that $M\coloneqq \max_{w \in \SS_\pm} J'_\infty(w) >0$. For the rest of the proof, we just comment on how to adapt the arguments of Theorem~\ref{th:biasWGF} to this case.

\textbf{Step 1.} The argument of Theorem~\ref{th:biasWGF} goes through if we replace $\SS^{p-1}$ by the set $\SS_{\pm}$, in particular thanks to Assumption (*). Lemma~\ref{lem:differentiability_relu} and the positive $2$-homogeneity guarantee that the restriction of the flow $X$ to $D$ is a diffeomorphism.

\textbf{Step 2.}  Since we focus on the exponential loss, we can directly apply Lemma~\ref{lem:argminrelu}, which gives $\spt(\lambda_\infty) \subset \arg\min_{x\in \spt(\rho)} y(x)h(\bar \nu_\infty, x)$. 

\textbf{Steps 3.} The argument of Theorem~\ref{th:biasWGF} goes through, again replacing $\SS^{p-1}$ by the set $\SS_{\pm}$.

\textbf{Steps 4.} We conclude with the optimality conditions in Proposition~\ref{prop:optimalityrelu} and the structure of the minimizers given in Proposition~\ref{prop:equivnorms}.
\end{proof}

\begin{lemma}\label{lem:relu_injectivity}
Assume that $\Xx\subset \RR^d$ is compact. If $\lambda \in \Mm(\Xx)$ satisfies $\int (a^\top (x,1))_+ \d\lambda(x)=0$ for all $a\in \SS^{d}$, then $\lambda = 0$.
\end{lemma}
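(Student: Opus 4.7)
My plan is to reduce the statement to the injectivity of the Fourier transform (or equivalently the Radon transform) for finite signed measures on the compact set $\Xx$. The key observation is that by differentiating the ReLU integral with respect to the bias parameter $t$, one recovers the tail distribution function of the pushforward of $\lambda$ along each direction, and that vanishing of all these one-dimensional marginals forces $\lambda=0$.

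First, write $a=(a',t)\in\RR^d\times\RR$ so that $(a^\top(x,1))_+=(a'^\top x+t)_+$. By the $1$-homogeneity of the ReLU, the hypothesis over $a\in\SS^d$ upgrades to: for every $(a',t)\in\RR^{d+1}\setminus\{0\}$,
$$g_{a'}(t):=\int_{\Xx}(a'^\top x+t)_+\,\d\lambda(x)=0.$$
Fix any $a'\neq 0$. Since $\Xx$ is compact and $\lambda$ is a finite signed measure, $g_{a'}$ is Lipschitz in $t$ and its a.e.\ derivative equals $F_{a'}(t):=\lambda(\{x\in\Xx:a'^\top x+t>0\})$. From $g_{a'}\equiv 0$ we deduce $F_{a'}=0$ almost everywhere. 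However $t\mapsto F_{a'}(t)$ is precisely the tail $s\mapsto \lambda_{a'}((-s,\infty))$ of the pushforward signed measure $\lambda_{a'}:=(a'^\top\cdot)_\#\lambda$ on $\RR$, hence is right-continuous in $t$. Right-continuity together with a.e.\ vanishing forces $F_{a'}\equiv 0$, so the distribution function of $\lambda_{a'}$ is identically zero and therefore $\lambda_{a'}=0$.

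To finish, note that the one-dimensional Fourier transform of $\lambda_{a'}$ then vanishes at every frequency, which translates to
$$\widehat{\lambda}(\xi)=\int_{\Xx}e^{i\xi^\top x}\,\d\lambda(x)=\int_{\RR}e^{iu}\,\d\lambda_{\xi}(u)=0\qquad\text{for all }\xi\in\RR^d$$
(the $\xi=0$ case following by continuity of $\widehat{\lambda}$). By Fourier injectivity for finite signed Borel measures, $\lambda=0$.

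The only delicate point is the jump set of $F_{a'}$: it consists of those $t$ for which $\lambda$ charges the hyperplane $\{a'^\top x=-t\}$, which is at most countable, and the distinction between $>$ and $\geq$ in the defining inequality is inconsequential once right-continuity is invoked. I do not anticipate any serious obstacle beyond this bookkeeping.
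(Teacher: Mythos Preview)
Your proof is correct and takes a genuinely different route from the paper. One minor slip: the function $t\mapsto \lambda_{a'}((-t,\infty))$ is \emph{left}-continuous, not right-continuous (since $s\mapsto \lambda_{a'}((s,\infty))$ is right-continuous and you compose with $t\mapsto -t$). This is harmless: left-continuity together with $F_{a'}=0$ almost everywhere still forces $F_{a'}\equiv 0$, because the zero set is dense.

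The paper argues differently. It associates to $\lambda$ a signed measure $\tilde\lambda$ on $\SS^{d}$ supported on a compact cap $S_\alpha$ (essentially the image of $\Xx$ under $x\mapsto (x,1)/\Vert (x,1)\Vert$), and then invokes a density result from \citet{bach2017breaking}: the functions $a\mapsto \int_{\SS^{d}}(a^\top z)_+\, g(z)\,\d\tau(z)$ with $g\in L^2(\tau)$ are dense in $\Cc(S_\alpha)$. Fubini together with the hypothesis give $\int f_g\,\d\tilde\lambda=0$ for all such $f_g$, hence $\tilde\lambda=0$ and thus $\lambda=0$.

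The paper's proof is short once the RKHS universality result is in hand and stays within the functional-analytic toolkit used elsewhere in the appendix. Your argument is more self-contained: by differentiating in the bias variable you reduce directly to the vanishing of all one-dimensional marginals of $\lambda$, i.e., to the classical injectivity of the Fourier (equivalently Radon) transform. This makes the underlying mechanism quite transparent and avoids relying on an external approximation theorem.
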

\begin{proof}
Consider the measure $\tilde \lambda\in \Mm(\SS^{d})$ that is such that $\int \varphi((x,1))\d\lambda(x) = \int \varphi(z)\d\tilde \lambda(z)$ for all positively homogeneous functions $\varphi \in \Cc(\RR^{d+1})$. By construction, $\tilde \lambda$ is concentrated on the set $S_\alpha$ of points in $\SS^{d}$ for which the last coordinate is larger than $\alpha$ for some $\alpha>0$.  Let us show that $\tilde \lambda=0$, which implies that $\lambda =0$ and thus the claim.

Let $\d\tau \in \Pp(\SS^{d})$ be the uniform distribution on the sphere. It is known~\citep{bach2017breaking} that the set of functions $\{f_g: a\mapsto \int (a^\top z)_+g(z)\d\tau(z);\;;\; g\in L^2(\tau)\}$ is dense in $\Cc(S_\alpha)$.  Since for $g\in L^2(\tau)$ it holds, by Fubini,
$$
\int f_g\d\tilde \lambda = \int \d\tilde \lambda(a) \int \d\tau(z) (a^\top z)_+ g(z)  = \int \d\tau(z) \int \d\tilde \lambda(a) (a^\top z)_+ g(z)=0,
$$
it follows that $\tilde \lambda=0$ and thus $\lambda=0$.
\end{proof}
Finally, let us state the optimality conditions of the optimization problem mentionned in Theorem~\ref{th:bias_relu} (it is an application of minimax duality~\citep{sion1958general}, just like Proposition~\ref{prop:optimalityrelu}). For $\Xx\subset \RR^d$ compact, let
\begin{equation}\label{eq:maxmargin1_relu}
\gamma_1 \coloneqq \max_{\Vert f\Vert_{\Ff_1}\leq 1}\min_{x\in \Xx}\  y(x) f(x) = \max_{\substack{\nu \in \Pp(\SS^{p-1})}}\min_{\lambda \in \Pp(\Xx)}\  \int_{\Xx} \int_{\SS^{p-1}}  y(x)\phi(\theta,x)\d\nu(\theta)\d\lambda(x).
\end{equation}
\begin{proposition}[Optimality conditions - ReLu]\label{prop:optimalityrelu}
The maximization problem~\eqref{eq:maxmargin1_relu} admits global maximizers $\nu^\star \in \Pp(\SS^{p-1})$. Moreover, a measure $\nu^\star \in \Pp(\SS^{p-1})$ is a global maximizer of~\eqref{eq:maxmargin1} if and only if $\nu^\star \in \Pp(\SS^{p-1})$ and there exists $\lambda^\star \in \Pp(\Xx)$ such that 
\begin{align*}
\spt \nu^\star \subset \arg\max_{\theta\in \SS^{p-1}} \int_{\Xx} y(x) \phi(\theta,x)\d\lambda^*(x)
\qandq
\spt \lambda^\star \subset \arg\min_{x\in \Xx}  \int_{\SS^{p-1}} y(x)  \phi(\theta,x)\d\nu^\star(\theta).
\end{align*}
\end{proposition}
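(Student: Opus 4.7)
The plan is to recast \eqref{eq:maxmargin1_relu} as a bilinear minimax problem on a product of weak-$*$ compact convex sets of probability measures, apply Sion's theorem to secure the existence of a saddle point (and hence of a maximizer $\nu^\star$), and then translate the two saddle-point inequalities into the stated support conditions.

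First I would rewrite the inner minimum using the dual representation of the minimum of a continuous function over a compact set:
\[
\min_{x \in \Xx} y(x)\, h(\nu, x) \;=\; \min_{\lambda \in \Pp(\Xx)} \int_{\Xx} y(x)\, h(\nu, x) \,\d\lambda(x) \;=\; \min_{\lambda \in \Pp(\Xx)} L(\nu, \lambda),
\]
where $L(\nu, \lambda) := \int_{\Xx}\int_{\SS^{p-1}} y(x)\phi(\theta, x)\,\d\nu(\theta)\,\d\lambda(x)$ is bilinear on $\Pp(\SS^{p-1}) \times \Pp(\Xx)$. Both factors are convex and weak-$*$ compact (closed subsets of the unit ball in the dual of a space of continuous functions over a compact base, by Banach--Alaoglu). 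Because $(\theta, x) \mapsto y(x)\phi(\theta, x)$ is continuous and bounded on the compact product $\SS^{p-1} \times \Xx$ (using continuity of $y$ from \textsf{(A4)} and of the ReLU feature $\phi$), $L$ is separately weak-$*$ continuous, and its linearity makes it quasi-concave in $\nu$ and quasi-convex in $\lambda$. Sion's minimax theorem \citep{sion1958general} then yields
\[
\gamma_1 \;=\; \max_{\nu} \min_{\lambda} L(\nu, \lambda) \;=\; \min_{\lambda} \max_{\nu} L(\nu, \lambda),
\]
together with a saddle point $(\nu^\star, \lambda^\star) \in \Pp(\SS^{p-1}) \times \Pp(\Xx)$, which already proves the existence claim in the first sentence of the proposition.

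For the characterization, I would fix a saddle point $(\nu^\star, \lambda^\star)$ and introduce the two continuous ``dual'' functions
\[
g^\star(\theta) := \int_{\Xx} y(x)\phi(\theta, x)\,\d\lambda^\star(x) \qandq f^\star(x) := y(x)\int_{\SS^{p-1}} \phi(\theta, x)\,\d\nu^\star(\theta).
\]
The saddle inequality $L(\nu, \lambda^\star) \leq L(\nu^\star, \lambda^\star)$ for every $\nu \in \Pp(\SS^{p-1})$ says that $\nu^\star$ maximizes the linear functional $\nu \mapsto \int g^\star\,\d\nu$; the supremum of such a functional over probability measures on a compact set equals $\max_\theta g^\star(\theta)$ and is attained exactly by measures concentrated on $\arg\max g^\star$, so $\spt \nu^\star \subset \arg\max_\theta g^\star$. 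The symmetric argument applied to $\lambda \mapsto L(\nu^\star, \lambda)$ gives $\spt \lambda^\star \subset \arg\min_x f^\star$. Conversely, if $\nu^\star \in \Pp(\SS^{p-1})$ and some $\lambda^\star \in \Pp(\Xx)$ satisfy both inclusions, then $L(\nu^\star, \lambda^\star) = \max_\theta g^\star(\theta) = \min_x f^\star(x)$, so $(\nu^\star, \lambda^\star)$ is a saddle and the minimax identity identifies this common value with $\gamma_1$, certifying $\nu^\star$ as a global maximizer.

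The only delicate step will be a clean verification of the Sion hypotheses --- weak-$*$ compactness and separate weak-$*$ continuity on the product of probability spaces --- but each piece follows immediately from the boundedness and joint continuity of $y\phi$ on the compact product $\SS^{p-1} \times \Xx$. The argument is otherwise a direct transcription of the proof of Proposition \ref{prop:optimality}, with the simplex $\Delta^{n-1}$ replaced by $\Pp(\Xx)$ and the finite sum over training points replaced by integration against $\lambda$.
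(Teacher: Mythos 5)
Your proposal is correct and follows essentially the same route as the paper, which likewise invokes Sion's minimax duality on $\Pp(\SS^{p-1})\times\Pp(\Xx)$ to obtain a saddle point and reads off the support conditions as the complementary slackness characterization (the paper states this in one line, mirroring its proof of Proposition~\ref{prop:optimality}). Your added verifications of weak-$*$ compactness, continuity of the bilinear form, and the translation of the saddle inequalities into the support inclusions are exactly the details the paper leaves implicit.
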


\end{document}